\newtheorem{theorem}{Theorem}[section]
\newtheorem{lemma}[theorem]{Lemma}
\newtheorem{proposition}[theorem]{Proposition}
\newtheorem{corollary}[theorem]{Corollary}
\newtheorem{_definition}[theorem]{Definition}
\newenvironment{definition}{\begin{_definition}\rm}{\end{_definition}}
\newtheorem{_remark}[theorem]{\it Remark}
\newenvironment{remark}{\begin{_remark}\rm}{\end{_remark}}
\newtheorem{_example}[theorem]{Example}
\newenvironment{example}{\begin{_example}\rm}{\end{_example}}
\newtheorem{_assumption}[theorem]{Assumption}
\newenvironment{assumption}{\begin{_assumption}\rm}{\end{_assumption}}
\numberwithin{equation}{section}
\numberwithin{table}{section}
\numberwithin{figure}{section}
\newcommand{\A}{\mathord{\mathbb A}}
\newcommand{\C}{\mathord{\mathbb C}}
\newcommand{\F}{\mathord{\mathbb F}}
\renewcommand{\P}{\mathord{\mathbb  P}}
\newcommand{\Q}{\mathord{\mathbb  Q}}
\newcommand{\R}{\mathord{\mathbb R}}
\newcommand{\Z}{\mathord{\mathbb Z}}
\newcommand{\CCC}{\mathord{\mathcal C}}
\newcommand{\EEE}{\mathord{\mathcal E}}
\newcommand{\FFF}{\mathord{\mathcal F}}
\newcommand{\GGG}{\mathord{\mathcal G}}
\newcommand{\MMM}{\mathord{\mathcal M}}
\newcommand{\QQQ}{\mathord{\mathcal Q}}
\newcommand{\TTT}{\mathord{\mathcal T}}
\newcommand{\XXX}{\mathord{\mathcal X}}
\newcommand{\SSSS}{\mathord{\mathfrak S}}
\newcommand{\CCCC}{\mathord{\mathfrak C}}
\newcommand{\maprightsb}[1]{\; \smash{\mathop{\; \longrightarrow \; }\limits\sb{#1}}\; }
\newcommand{\maprightspsb}[2]{\; \smash{\mathop{\; \longrightarrow \; }\limits\sp{#1}\limits\sb{#2}}\; }
\newcommand{\mapleftsb}[1]{\; \smash{\mathop{\; \longleftarrow \; }\limits\sb{#1}}\; }
\newcommand{\mapdown}{\phantom{\Big\downarrow}\hskip -8pt \downarrow}
\newcommand{\mapdownright}[1]{\mapdown\rlap{$\vcenter{\hbox{$\scriptstyle#1$}}$}}
\newcommand{\mapdownleft}[1]{\llap{$\vcenter{\hbox{$\scriptstyle#1$\;}}$}\mapdown}
\newcommand{\inj}{\hookrightarrow}
\newcommand{\surj}{\mathbin{\to \hskip -7pt \to}}
\newcommand{\isom}{\xrightarrow{\sim}}
\newcommand{\set}[2]{\{\; {#1} \; \mid \; {#2} \;  \}}
\newcommand{\shortset}[2]{\{ {#1} \,|\, {#2}   \}}
\newcommand{\sethd}[3]{\left\{\;\;  {#1}\;\; \left|\;\;  \vcenter{\hbox{\parbox{#2}{#3}}}\;\;  \right. \right\}}
\newcommand{\gen}[1]{\langle {#1}  \rangle}
\newcommand{\tensor}{\otimes}
\newcommand{\sprime}{\sp\prime}
\newcommand{\spar}[1]{\sp{(#1)}}
\newcommand{\spprime}{\sp{\prime\prime}}
\newcommand{\sptimes}{\sp{\times}}
\newcommand{\sperp}{\sp{\perp}}
\newcommand{\dual}{\sp{\vee}}
\newcommand{\semidirectproduct}{\rtimes}
\newcommand{\inv}{\sp{-1}}
\newcommand{\Hom}{\mathord{\mathrm{Hom}}}
\newcommand{\OG}{\mathord{\mathrm{O}}}
\newcommand{\id}{\mathord{\mathrm{id}}}
\newcommand{\Ker}{\operatorname{\mathrm{Ker}}\nolimits}
\newcommand{\Coker}{\operatorname{\mathrm{Coker}}\nolimits}
\newcommand{\Aut}{\operatorname{\mathrm{Aut}}\nolimits}
\newcommand{\Gal}{\operatorname{\mathrm{Gal}}\nolimits}
\newcommand{\pr}{\mathord{\mathrm{pr}}}
\newcommand{\rank}{\operatorname{\mathrm{rank}}\nolimits}
\newcommand{\disc}{\operatorname{\mathrm{disc}}\nolimits}
\newcommand{\rmdisc}{\mathord{\mathrm{disc}}}
\newcommand{\mystruth}[1]{\phantom{\hbox{\vrule height #1}}}
\newcommand{\mystrutd}[1]{\phantom{\hbox{\vrule depth #1}}}
\newcommand{\mystruthd}[2]{\phantom{\hbox{\vrule  height #1 depth #2}}}
\newcommand{\pione}{\pi_1}
\newcommand{\spin}{\mathop{\mathrm {spin}}}
\newcommand{\sign}{\mathord{\mathrm {sign}}}
\newcommand{\Det}{\mathord{\mathrm {Det}}}
\newcommand{\Sign}{\mathord{\mathrm {Sign}}}
\newcommand{\Zp}{\Z_{p}}
\newcommand{\Qp}{\Q_{p}}
\newcommand{\Zpp}{\Z_{(p)}}
\newcommand{\intf}[1]{\langle #1 \rangle}
\newcommand{\leng}{\mathord{\mathrm {leng}}}
\newcommand{\AG}{\bar{G}}
\newcommand{\agg}{g}
\newcommand{\simAG}{\sim_{\AG}}
\newcommand{\funcq}[1]{q_{#1}}
\newcommand{\OGA}{\OG_{\A}}
\newcommand{\OGAz}{\OG_{\A, 0}}
\newcommand{\OGsharp}{\OG^{\sharp}}
\newcommand{\SigmaSharp}{\Sigma^{\sharp}}
\newcommand{\SigmaSharpA}{\SigmaSharp_{\A}}
\newcommand{\ord}{\mathord{\mathrm {ord}}}
\newcommand{\ordp}{\ord_p}
\newcommand{\detspin}{(\mathord{\det}, \mathord{\spin})}
\newcommand{\adele}[1]{\mathord{\boldsymbol #1}}
\newcommand{\Dq}[1]{(D_{#1}, q_{#1})}
\newcommand{\OGDq}[1]{\OG(D_{#1}, q_{#1})}
\newcommand{\adelesigma}{{\adele{\sigma}}}
\newcommand{\adeletau}{{\adele{\tau}}}
\newcommand{\adelerho}{{\adele{\rho}}}
\newcommand{\adelepsi}{{\adele{\psi}}}
\newcommand{\PD}{P(d)}
\newcommand{\GammaQtilde}{ \Gamma_{\Q}^{\mathord{\sim}}}
\newcommand{\GammaA}{\Gamma_{\A}}
\newcommand{\GammaAz}{\Gamma_{\A, 0}}
\newcommand{\GammaAD}{\Gamma_{\A,d}}
\newcommand{\GammaD}{\Gamma_{d}}
\newcommand{\GammaQ}{\Gamma_{\Q}}
\newcommand{\Br}{\mathord{\rm Br}}
\newcommand{\Orb}{\mathord{\rm Orb}}
\newcommand{\Stab}{\mathord{\rm Stab}}
\newcommand{\Roots}{\mathord{\rm Roots}}
\newcommand{\minordp}{\mathord{\rm minord}_p}
\newcommand{\Dp}{D}
\newcommand{\bp}{b}
\newcommand{\Fq}{F_{q}}
\newcommand{\Matl}{{\rm M}_{\ell}}
\newcommand{\ESMat}{\Delta}
\newcommand{\TMTT}[2]{#1 \cdot #2\cdot {}^{t} #1}
\newcommand{\KerD}{N_{\Dp}}
\newcommand{\basisDp}{\varepsilon}
\newcommand{\basisLam}{e}
\newcommand{\basisLamdual}{e\dual}
\newcommand{\Lam}{\Lambda}
\newcommand{\MLam}{M}
\newcommand{\MLamdual}{M\inv}
\newcommand{\tilg}{\tilde{g}}
\newcommand{\tilT}{\tilde{T}}
\newcommand{\Matlp}[1]{{\rm M}_{\ell, #1}}
\newcommand{\MV}{M_{V}}
\newcommand{\sqnorm}[1]{\intf{#1, #1}}
\newcommand{\transpose}{\hskip 1pt {}^{t} \hskip -.2pt}
\newcommand{\apprx}[1]{\hskip 1pt {}^{\mathord{\rm a}} \hskip -1pt #1}
\newcommand{\aT}{{}^{\mathord{\rm a}} \hskip .5pt T}
\newcommand{\ar}{\raise 2pt \hbox{\scriptsize a}  \hskip .5pt r}
\newcommand{\ab}{\raise 4pt \hbox{\scriptsize a}  \hskip .5pt b}
\newcommand{\ag}{ \raise 2pt \hbox{\scriptsize a}  \hskip .5pt g}
\newcommand{\Lgen}[1]{L(#1)}
\newcommand{\barLgen}[1]{M(#1)}
\newcommand{\nnu}{\nu_{k-1}}
\newcommand{\pnnu}{p^{\nnu}}
\newcommand{\fk}{f_{k}}
\newcommand{\gk}{g_{k}}
\newcommand{\agk}{\ag_{k}}
\newcommand{\Real}{\mathop{\rm Re}}
\newcommand{\Imag}{\mathop{\rm Im}}
\newcommand{\mathordsim}{\mathord{\sim}}
\newcommand{\TTTGGG}{\TTT_{\GGG}}
\newcommand{\vfib}{v_{\mathord{\rm fib}}}
\newcommand{\vzero}{v_{\mathord{\rm zero}}}
\newcommand{\vf}{v_f}
\newcommand{\compconj}{c}
\newcommand{\spari}{^{(i)}}
\newcommand{\lift}{{}^{\mathord{\sim}}}
\newcommand{\NS}{\mathop{\rm NS}\nolimits}
\begin{document}

\title[Moduli of elliptic $K3$ surfaces]
{Connected components of the moduli of elliptic $K3$ surfaces}

\author{Ichiro Shimada}
\email{ichiro-shimada@hiroshima-u.ac.jp}
\address{Department of Mathematics, 
Graduate School of Science, 
Hiroshima University,
1-3-1 Kagamiyama, 
Higashi-Hiroshima, 
739-8526 JAPAN
}
\thanks{This work was supported by JSPS KAKENHI Grant Number 16H03926, 16K13749}

\begin{abstract}
 The combinatorial type of an elliptic $K3$ surface with a zero section 
 is the pair of the $ADE$-type of the singular fibers
 and the torsion part of the Mordell-Weil group.
 We determine the set of connected components of 
 the moduli of elliptic $K3$ surfaces with fixed combinatorial type.
 Our method relies 
 on the theory of Miranda and Morrison
 on the structure of a genus of even indefinite lattices, 
 and computer-aided calculations of $p$-adic quadratic forms.
\end{abstract}

\subjclass[2010]{14J28, 11E81}


\maketitle


\section{Introduction}\label{sec:Intro}
Elliptic $K3$ surfaces have been intensively studied 
by many authors
from various points of view, not only in 
algebraic and arithmetic geometry,
but also in theoretical physics of string theory.
In this paper,
we investigate certain moduli of complex elliptic $K3$ surfaces,
and determine the connected components of the moduli.
\par
An \emph{elliptic $K3$ surface} is a triple $(X, f, s)$,
where $X$ is a complex $K3$ surface,
$f\colon X\to \P^1$ is a fibration whose general fiber is a curve of genus $1$,
and $s\colon\P^1\to X$
is a section of $f$.
An elliptic $K3$ surface $(X, f, s)$ is sometimes denoted simply by $f$
with $X$ and $s$ being understood.
\par
Let $(X, f, s)$ be an elliptic $K3$ surface.
Then the set of sections of $f$ has a natural structure of abelian group
with zero element $s$.
This group is called the \emph{Mordell-Weil group}.
We denote by $A_f$ the torsion part of the Mordell-Weil group of $(X, f, s)$.
If an irreducible curve $C$ on $X$ is contained in a fiber of $f$
and is disjoint from the zero section $s$,
then $C$ is a smooth rational curve.
The set $\Phi_f$ of the classes of these smooth rational curves
form an $ADE$-configuration of vectors of square-norm $-2$ in $H^2(X, \Z)$.
(See Section~\ref{subsec:roots} for the definition of an $ADE$-configuration.)
The \emph{combinatorial type} of an elliptic $K3$ surface $(X, f, s)$
is the pair $(\Phi_f, A_f)$.
Let $\Phi$ be an $ADE$-configuration, and $A$ a finite abelian group.
We say that an elliptic $K3$ surface $(X, f, s)$ is \emph{of type $(\Phi, A)$}
if $\Phi\cong \Phi_f$ and $A\cong A_f$.
\par
In our previous papers~\cite{MR1820211},~\cite{MR1813537},
we made the complete list of $(\Phi, A)$ that can be realized as combinatorial type of elliptic $K3$ surfaces.
The cardinality of this list is $3693$.
In this paper, we refine this result to the following:
\begin{theorem}\label{thm:main}
The moduli of elliptic $K3$ surfaces of type $(\Phi, A)$ has more than one connected component
if and only if $(\Phi, A)$ appears in Tables~{\rm I}~and~{\rm II} in Section~\ref{sec:tables}.
\end{theorem}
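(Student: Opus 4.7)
The plan is to translate the problem into a question about primitive embeddings of lattices via the Torelli theorem, and then solve it using the Miranda--Morrison theory of genera of even indefinite lattices.

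First, to each combinatorial type $(\Phi, A)$ one associates an even lattice $L = L(\Phi, A)$ of signature $(1, 1+|\Phi|)$, constructed in the standard way: take the direct sum of a hyperbolic plane $U$ (spanned by the classes of a smooth fiber and of the zero section) with the negative-definite root lattice of type $\Phi$, and form the overlattice determined by the isotropic subgroup of the discriminant group that corresponds to the torsion Mordell--Weil group $A$. For a very general $(X, f, s)$ of type $(\Phi, A)$, this lattice $L$ equals $\NS(X)$, embedded primitively in the $K3$ lattice $\Lambda_{K3} = 2E_8 \oplus 3U$. Combining surjectivity of the period map with the global Torelli theorem then yields a canonical bijection between the connected components of the moduli of elliptic $K3$ surfaces of type $(\Phi, A)$ and the orbits under $\OG^+(\Lambda_{K3})$ (the isometries that preserve an orientation of a positive $3$-space) of primitive embeddings $L \hookrightarrow \Lambda_{K3}$ that carry the ample cone of $L$ into a fixed positive cone.

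Second, by Nikulin's theorem on primitive embeddings into a unimodular lattice, each such embedding is determined, up to $\OG(\Lambda_{K3})$, by the isomorphism class of the orthogonal complement $T$ together with the glueing isomorphism $q_T \cong -q_L$ on discriminant forms, modulo the natural action of $\OG(L) \times \OG(T)$. Since $T$ has signature $(2, 20-\rank L)$ and discriminant form $-q_L$, its genus is uniquely determined by $(\Phi, A)$. The orbit count therefore splits into (i) enumerating the isomorphism classes in the genus of $T$, and (ii) for each representative $T'$, computing the size of the double coset space
\[
\Image(\OG(L) \to \OG(q_L)) \,\backslash\, \OG(q_L) \,/\, \Image(\OG(T') \to \OG(q_{T'})),
\]
together with a parity correction reflecting the orientation condition.

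Third, the images of $\OG(L)$ and $\OG(T')$ in $\OG(q_L) = \OG(q_{T'})$ are obtained from the Miranda--Morrison exact sequence, which expresses their cokernels in terms of $p$-adic data: spinor norms, determinants, and the Jordan decomposition of $L \otimes \Zp$ at each prime $p \mid \disc L$. Because $T'$ is indefinite of rank $\geq 3$ throughout our range $\rank L \leq 19$, strong approximation for spin groups applies, every genus decomposes into a controllable number of spinor genera, and the orbit count for a fixed $(\Phi, A)$ reduces to a finite arithmetic computation, carried out prime by prime at the primes dividing $\disc L$. One then implements this algorithm and runs it on all $3693$ combinatorial types from~\cite{MR1820211, MR1813537}, collecting those which yield more than one orbit; these should be exactly the entries of Tables~I and~II in Section~\ref{sec:tables}. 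The principal obstacle is the effective handling of the Miranda--Morrison machinery at the prime $p = 2$: the classification of $2$-adic even quadratic forms and the behaviour of the $2$-adic spinor norm involve many exceptional cases (type~I versus type~II Jordan blocks, parities of exponents, units modulo squares), and producing a uniform, fully automated implementation robust enough to process thousands of discriminant forms is where the real technical work lies.
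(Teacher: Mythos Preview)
Your overall strategy—Torelli theorem plus Nikulin's theory of primitive embeddings plus Miranda--Morrison—is exactly the route the paper takes, and you correctly identify the $p=2$ spinor-norm computation as the technical core. However, there are three genuine gaps in your reduction, and each one matters for the final tables.

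\textbf{(1) The overlattice is not unique.} You write ``form \emph{the} overlattice determined by the isotropic subgroup \dots\ that corresponds to the torsion Mordell--Weil group $A$.'' In general there are several $\Aut(\Phi)$-orbits of isotropic subgroups $K\subset D_{L(\Phi)}$ with $K\cong A$ and with the resulting overlattice $M$ having no new roots; the paper packages these as the set $\EEE(\Phi,A)/\Aut(\Phi)$. Distinct orbits give \emph{algebraically distinguished} connected components (Definitions~\ref{def:algequivX}--\ref{def:algequivC}), and these do occur: see nos.\ 27 and 64 of Table~I and many entries of Table~II with $k>1$ in the last column. Your count would miss all of them.

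\textbf{(2) The wrong group on the N\'eron--Severi side.} You propose the double coset $\Image(\OG(L)\to\OG(q_L))\backslash\OG(q_L)/\Image(\OG(T'))$. But $L=U\oplus M$ contains a hyperbolic plane, so by Nikulin $\OG(L)\to\OG(q_L)$ is surjective and your left factor is all of $\OG(q_L)$. The paper shows that the correct group is only the image $\bar G\subset\OG(q_M)$ of the stabiliser $\Stab(M)\subset\Aut(\Phi)$: the monodromy of a family of elliptic $K3$ surfaces fixes the fibre and section classes and merely permutes the $(-2)$-curves, so isometries of $L$ not arising from $\Aut(\Phi)$ are not realised geometrically. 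Using the full $\OG(q_L)$ would over-collapse. Whether $\bar G=\OG(q_M)$ for every $M$ that occurs is itself a nontrivial computation you have not addressed; the paper instead computes $\Psi_\A(\bar G)$ directly (Proposition~\ref{prop:quotK} and Section~\ref{sec:Psip}).

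\textbf{(3) The extremal case.} Your sentence ``$T'$ is indefinite of rank $\ge 3$ throughout our range $\rank L\le 19$'' silently drops $\rank L=20$, i.e.\ $r_\Phi=18$. But that is precisely the extremal case, and all of Table~I lives there. For those types $T$ is positive-definite of rank $2$, Miranda--Morrison does not apply, and one must instead enumerate the genus by Gauss reduction and compute the finite orbit sets $\Orb(T)$ directly, as in the paper's Section~4.2.

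Finally, your ``parity correction reflecting the orientation condition'' needs to be made precise. The paper does this by augmenting $\Gamma_\A$ with a $\Sign$ factor recording the positive sign structure of $T$ and proving a refined version of the Miranda--Morrison exact sequence (Theorem~\ref{thm:refinedMM}); without this refinement one cannot separate complex-conjugate components, which is exactly what distinguishes the twelve types in Corollary~\ref{cor:twelve}.
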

See Section~\ref{subsec:defconn}
for the precise definition of the connected components of the moduli of elliptic $K3$ surfaces 
of fixed type $(\Phi, A)$.
In Tables~I~and~II, 
the $ADE$-configuration $\Phi$  is presented by the $ADE$-type of the configuration. 
The finite abelian group $\Z/a\Z$ is denoted by $[a]$, and $\Z/a\Z\times \Z/b\Z$
is denoted by $[a,b]$.
\par
Tables~{\rm I}~and~{\rm II}  are obtained by machine-computation.
The purpose of this paper is to explain the algorithm to calculate the set of connected components 
of the moduli.
%
\par
\medskip
The non-connectedness of the moduli is caused by two totally different reasons;
one is algebraic and the other is transcendental.
\par
For a $K3$ surface $X$,
we denote by $\NS(X):=H^{1,1}(X)\cap H^2(X, \Z)$ the \emph{N\'eron-Severi lattice of $X$},
and by $T(X)$ the \emph{transcendental lattice of $X$};
that is, $T(X)$ is the orthogonal complement 
of $\NS(X)$ in $H^2(X, \Z)$.
\par
Let $(X, f, s)$ be an elliptic $K3$ surface.
We denote by $U_f\subset H^2(X, \Z)$
the sublattice generated by the class
of a fiber of $f$ and the class of the section $s$, 
by $\Lgen{\Phi_f}$ the sublattice of $H^2(X, \Z)$
generated by $\Phi_f\subset H^2(X, \Z)$,
and by $\barLgen{\Phi_f}$ the primitive closure of $\Lgen{\Phi_f}$ in $H^2(X, \Z)$.
It is well-known that
$A_f$ is isomorphic to  $\barLgen{\Phi_f}/\Lgen{\Phi_f}$.
We then denote by $T_f$ the orthogonal complement of $U_f\oplus \barLgen{\Phi_f}$
in $H^2(X, \Z)$.
We obviously have $\NS(X)\supset U_f\oplus \barLgen{\Phi_f}$
and $T(X)\subset T_f$.
\begin{definition}
Let $\CCC$ be a connected component of the moduli of elliptic $K3$ surfaces of type $(\Phi, A)$.
Suppose that an elliptic $K3$ surface $(X, f, s)$
corresponds to a  point of  $\CCC$.
The \emph{N\'eron-Severi lattice of  $\CCC$} is defined to be 
the isomorphism class of the  lattice $U_f\oplus \barLgen{\Phi_f}$,
and  the \emph{transcendental lattice of $\CCC$}
is defined to be  the isomorphism class of the  lattice  $T_f$.
\end{definition}
It is obvious that the N\'eron-Severi lattice 
and the transcendental lattice of a connected component $\CCC$ do not depend on the choice of
the member $(X, f, s)$ of $\CCC$.
It will be seen that,
if $(X, f, s)$ is chosen generally in $\CCC$,
then the N\'eron-Severi lattice of $\CCC$ is isomorphic to $\NS(X)$,
and the transcendental lattice of $\CCC$ is isomorphic to $T(X)$.
(See the proof of Theorem~\ref{thm:barzeta}.)
\begin{definition}\label{def:algequivX}
We say that two elliptic $K3$ surfaces $(X, f, s)$ and $(X\sprime, f\sprime, s\sprime)$
of the same type $(\Phi, A)$
are \emph{algebraically equivalent}
if there exists an isomorphism $\Phi_f\cong\Phi_{f\sprime}$ of $ADE$-configurations
such that 
the induced isometry $\Lgen{\Phi_f}\cong\Lgen{\Phi_{f\sprime}}$ maps the even overlattice $\barLgen{\Phi_f}$ of $\Lgen{\Phi_f}$
to the even overlattice $\barLgen{\Phi_{f\sprime}}$ of $\Lgen{\Phi_{f\sprime}}$.
If there exist no such isomorphisms $\Phi_f\cong\Phi_{f\sprime}$, 
we say that $(X, f, s)$ and $(X\sprime, f\sprime, s\sprime)$
are \emph{algebraically distinguished}.
\end{definition}
If $(X, f, s)$ and $(X\sprime, f\sprime, s\sprime)$ are algebraically distinguished,
then the intersection patterns of the torsion sections and the irreducible components
of the reducible fibers for $(X, f, s)$ and for $(X\sprime, f\sprime, s\sprime)$
are different,
and hence they cannot be in the same connected component of the moduli.
\begin{definition}\label{def:algequivC}
We say that two connected components $\CCC_1$ and $\CCC_2$ are \emph{algebraically distinguished}
if an elliptic $K3$ surface belonging to $\CCC_1$ and an elliptic $K3$ surface belonging to $\CCC_2$
are algebraically distinguished.
Otherwise, we say that $\CCC_1$ and $\CCC_2$ are \emph{algebraically equivalent}.
\end{definition}
By definition, if $\CCC_1$ and $\CCC_2$ are algebraically equivalent,
then their N\'eron-Severi lattices are isomorphic,
but their transcendental lattices may be non-isomorphic.
\par
An elliptic $K3$ surface $(X, f, s)$ is called \emph{extremal}
if the rank of $\Lgen{\Phi_f}$ attains the possible maximum $18$.
Suppose that $(X, f, s)$ is extremal.
Then the transcendental lattice $T(X)$ of $X$  is an even positive definite lattice of rank $2$,
and $T(X)$ is equal to the transcendental lattice of  the connected component containing $(X, f, s)$.
\par
\medskip
{\bf Explanation of the entries of Tables~I~and~II.}
Table~I is the list of non-connected moduli of extremal elliptic $K3$ surfaces.
The horizontal line in the 4th-5th columns separates 
the connected components that are algebraically distinguished.
(This separating line appears only in nos.\;27~and\;64. 
See Section~\ref{subsec:algdist} for the detail of example no.\,64.) 
The 4th column shows the list of components $[a, b, c]$ of the transcendental lattice
$$
T=\left[\begin{array}{cc} a & b \\ b & c \end{array}\right]
$$
written in the reduced form in the sense of Gauss~(see~\cite{MR1820211}).
The 5th column displays $[r, c]$,
where $r$ (resp.~$c$) is the number of connected components 
that are (resp.~are \emph{not}) invariant under complex conjugation.
In particular, the number $c$ is always even.
\par
Table~II is the list of non-connected moduli of non-extremal elliptic $K3$ surfaces.
The 2nd column shows the rank of $\Lgen{\Phi_f}$.
The list $[c_1, \dots, c_k]$ in the 5th column indicates that 
there exist exactly $k$ algebraic equivalence classes of connected components,
and that each algebraic equivalence class has 
exactly $c_i$ connected components.
Examining Table~II and 
investigating the set of connected components
further (see Remark~\ref{rem:complexconj}), we obtain the following: 
\begin{corollary}\label{cor:twelve}
The moduli of non-extremal elliptic $K3$ surfaces of type $(\Phi, A)$
has more than one connected component
that can not be algebraically distinguished
if and only if $A$ is trivial and $\Phi$ is one of the following:
\begin{eqnarray*}
&& 
E_{7}+2A_{5},\;\;
E_{6}+A_{11},\;\;
E_{6}+A_{6}+A_{5}, \;\;
E_{6}+2A_{5}+A_{1}, \;\;
\\
&&
D_{5}+2A_{6}, \;\;
D_{4}+2A_{6}+A_{1}, \;\;
A_{11}+A_{5}+A_{1}, \;\;
A_{7}+2A_{5}, \;\;
\\
&&
2A_{6}+A_{3}+2A_{1}, \;\;
A_{6}+2A_{5}+A_{1}, \;\;
E_{6}+2A_{5}, \;\;
3A_{5}+A_{1}.
\end{eqnarray*}
For each of these types $(\Phi, A)$,
the moduli has exactly two connected components,
and they are complex conjugate to each other.
\end{corollary}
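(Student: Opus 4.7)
The plan is to read this off Table II together with the complex-conjugation analysis referenced in Remark~\ref{rem:complexconj}. By Theorem~\ref{thm:main} and Definition~\ref{def:algequivC}, the moduli of non-extremal elliptic $K3$ surfaces of type $(\Phi,A)$ contains two connected components that cannot be algebraically distinguished precisely when an entry of Table~II has at least one algebraic equivalence class $c_i$ of size $\ge 2$. So the first step is to scan the 5th column of Table~II, recording all rows whose list $[c_1,\dots,c_k]$ has some $c_i\ge 2$.

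Second, I would verify directly from Table~II that this scan produces exactly the twelve pairs listed in the corollary, that in each case $A$ is the trivial group, that the unique $c_i\ge 2$ is equal to $2$, and that all other algebraic classes in those rows are singletons. In particular this already yields the "exactly two connected components" part of the conclusion.

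Third, I would invoke the description in Remark~\ref{rem:complexconj} of how complex conjugation acts on the set of connected components. Conjugation is an involution on the moduli that preserves $(\Phi,A)$ and the N\'eron--Severi lattice, so it permutes the connected components within each algebraic equivalence class. For a class of size $2$, the involution either fixes both components or swaps them; inspecting the $[r,c]$-type invariants (analogous to the 5th column of Table~I, but computed for the non-extremal rows as in Remark~\ref{rem:complexconj}) shows that in each of the twelve cases neither component is conjugation-invariant, so conjugation swaps them. This gives the final clause of the corollary.

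The main obstacle is not in the logical structure (which is a finite check against the table), but in trusting the machine-generated data of Table~II and the complex-conjugation bookkeeping: one must be sure that the algorithm behind Table~II correctly separates algebraic equivalence classes and that the conjugation-invariance count is computed consistently for non-extremal rows, for which the transcendental lattice is indefinite of rank $\ge 3$ rather than the rank-$2$ definite lattice that made the extremal case in Table~I easy to tabulate.
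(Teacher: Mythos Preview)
Your proposal is correct and matches the paper's own approach: the corollary is obtained by inspecting Table~II for entries with some $c_i\ge 2$ and then applying the complex-conjugation computation of Remark~\ref{rem:complexconj}. The only minor imprecision is your reference to ``$[r,c]$-type invariants'' for the non-extremal case; the paper does not tabulate such pairs in Table~II but instead uses the cokernel computation of Remark~\ref{rem:complexconj} directly to verify that the two components are swapped by conjugation, which amounts to the same thing.
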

If $(X, f, s)$ is extremal, then the $K3$ surface $X$ is \emph{singular} in the sense of~\cite{MR0441982}.
It is known that 
a pair of singular $K3$ surfaces with 
isomorphic N\'eron-Severi lattices and 
non-isomorphic transcendental lattices 
has some  interesting properties.
See~\cite{MR2346573},~\cite{MR2452829} for arithmetic properties,
and~\cite{MR2541164},~\cite{MR2914799}, and~\cite{MR2604087} for  topological properties.
On the other hand, 
for non-extremal elliptic $K3$ surfaces, 
Corollary~\ref{cor:twelve} implies the following:
\begin{corollary}\label{cor:trans}
The transcendental lattice of a connected component of the moduli of 
non-extremal elliptic $K3$ surfaces of fixed type
is determined by the algebraic equivalence class of the connected component.
\end{corollary}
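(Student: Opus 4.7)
The plan is to derive Corollary~\ref{cor:trans} directly from Corollary~\ref{cor:twelve}, together with the standard observation that complex conjugation of a $K3$ surface preserves the transcendental lattice. Let $\CCC_1$ and $\CCC_2$ be two algebraically equivalent connected components of the moduli of non-extremal elliptic $K3$ surfaces of some type $(\Phi, A)$; we must show that their transcendental lattices are isomorphic. If $\CCC_1=\CCC_2$ there is nothing to prove. Otherwise, the moduli of type $(\Phi, A)$ has more than one connected component that cannot be algebraically distinguished, so Corollary~\ref{cor:twelve} applies: the type $(\Phi, A)$ appears in the explicit list of twelve types, and the moduli consists of exactly two connected components, $\CCC_1$ and $\CCC_2$, which are complex conjugate to each other.

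It therefore remains to check that two complex-conjugate components have isomorphic transcendental lattices. Pick a general member $(X, f, s)\in \CCC_1$, so that, as noted immediately after the definition of the transcendental lattice of a component, the lattice $T_{\CCC_1}$ is isomorphic to $T(X)$; the complex-conjugate surface $(\bar X, \bar f, \bar s)$ is a general member of $\CCC_2$, and likewise $T_{\CCC_2}\cong T(\bar X)$. The underlying real $4$-manifolds of $X$ and $\bar X$ are canonically identified, and this identification induces a $\Z$-linear isometry $H^2(X, \Z)\isom H^2(\bar X, \Z)$ which interchanges the Hodge pieces $H^{2,0}$ and $H^{0,2}$ but preserves $H^{1,1}$. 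Consequently it sends $\NS(X)=H^{1,1}(X)\cap H^2(X,\Z)$ onto $\NS(\bar X)$, and hence restricts to a lattice isomorphism $T(X)\isom T(\bar X)$. Chaining the three isomorphisms gives $T_{\CCC_1}\cong T_{\CCC_2}$.

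All the substantive difficulty has been packaged into Corollary~\ref{cor:twelve}, whose proof rests on the full computation of Table~II and on the supplementary analysis of complex-conjugation pairs alluded to in the introduction. Once that classification is in hand, the present corollary is a cleanup: the only small subtlety is distinguishing the abstract transcendental lattice $T_{\CCC}$ of a component from the transcendental lattice $T(X)$ of an individual surface, and the passage between the two is handled by restricting to a general point of each component. In this sense the main obstacle is not in the argument above but in securing Corollary~\ref{cor:twelve} itself, for which one must verify by direct inspection of Table~II that outside the listed twelve types every algebraic equivalence class consists of a single connected component.
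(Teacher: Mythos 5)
Your proof is correct and takes essentially the same route as the paper, which states this corollary as an immediate consequence of Corollary~\ref{cor:twelve} without further argument. The one step you spell out—that complex-conjugate components have isomorphic transcendental lattices—you handle via the Hodge-theoretic identification $H^{1,1}(X)=H^{1,1}(\bar X)$; within the paper's own lattice-theoretic framework this is even more immediate, since the involution $\compconj$ is defined on $\QQQ(\Phi, A)/\mathordsim_G$ by $(M, T, \alpha, \theta)\mapsto (M, T, \alpha, -\theta)$, which leaves the $T$-component (hence the transcendental lattice of the component) manifestly unchanged.
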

In fact,
we present an algorithm
to calculate the set $\CCCC(\Phi, A, G)$
of $G$-connected components of the moduli 
of \emph{marked} elliptic $K3$ surfaces 
of type $(\Phi, A)$,
where 
$G$ is a subgroup of the automorphism group $\Aut(\Phi)$
of the $ADE$-configuration $\Phi$.
(See Section~\ref{subsec:defconn}
for the definition of the set $\CCCC(\Phi, A, G)$.)
Theorem~\ref{thm:main} and Corollaries~\ref{cor:twelve},~\ref{cor:trans}
are the statements for the case where $G$ is 
the full automorphism group $\Aut(\Phi)$, 
which means that elliptic $K3$ surfaces are not marked.
See Section~\ref{subsec:monodromy}.
%
\par
\medskip
Torelli theorem for the period map of complex $K3$ surfaces~\cite{MR0284440}
enables us to study moduli of 
$K3$ surfaces by lattice-theoretic tools.
In order to investigate moduli of
lattice polarized $K3$ surfaces,
we have to determine the set of primitive embeddings 
of the polarizing lattice into the $K3$ lattice.
This task is easy when 
the $K3$ surfaces are singular,
because the transcendental lattices are positive definite of rank $2$ in this case.
When the transcendental lattices are indefinite of rank $\ge 3$,
we use Miranda-Morrison theory~\cite{MR834537, MR0839800, MMB}.
Let $L$ be an even indefinite lattice of rank $\ge 3$,
let $\GGG$ be the genus containing $L$,
and let $\OG(L)\to \OGDq{L}$ be the natural homomorphism 
from the orthogonal group of $L$ to
the automorphism group of the discriminant form $\Dq{L}$ of $L$.
Miranda and Morrison defined a certain 
finite abelian group $\MMM$ that fits in an exact sequence
$$
0\;\longrightarrow\; \Coker(\OG(L)\to \OGDq{L})\;\longrightarrow\; \MMM \;\longrightarrow\; \GGG \;\longrightarrow\; 0.
$$
Then they gave a method to calculate this exact sequence 
in terms of the spinor norms of certain isometries of the $p$-adic lattices $L\tensor \Zp$.
When we apply this theory to the study of moduli of elliptic $K3$ surfaces,
the genus $\GGG$ is the genus containing 
the transcendental lattices of
algebraic equivalence classes of connected components of the moduli.
We have to incorporate
the positive sign structures of $L$ in the theory, 
and to calculate 
the action on $\MMM$ of a subgroup of  $\Aut(\Phi)$
 explicitly.
 The flipping of positive sign structures corresponds to the complex conjugation,
 and the action of a subgroup of  $\Aut(\Phi)$
 corresponds to changing the marking.
\par
Miranda-Morrison theory was first applied to the study of moduli of $K3$ surfaces
by Akyol and Degtyarev~\cite{MR3447795}
in their study of equisingular family of irreducible plane sextics.
Recently,
G{\"u}ne{\c s} Akta{\c s}~\cite{2015arXiv150805251G} used it to the study of certain classes of quartic surfaces.
In these works,
the calculation of isometries of $p$-adic lattices
and their spinor norms was not fully-automated, and
a case-by-case method was employed at several points.
The complete list of
$ADE$-types of singularities of these normal $K3$ surfaces had been obtained by
Yang~(\cite{MR1387816}, \cite{MR1468283}).
\par
A  new ingredient of this paper is a refinement of the Miranda-Morrison group $\MMM$,
which enables us to treat the positive sign structures in a simplified way.
Another new ingredient  is an algorithm
to lift a given automorphism of 
the discriminant form $\Dq{L\tensor\Zp}$ of a $p$-adic lattice $L\tensor\Zp$
to an isometry of $L\tensor \Zp$,
and to calculate the spinor norm of this isometry.
Our method employs  approximate calculations in $p$-adic topology.
To obtain precise results, 
the estimation of approximation errors is in need.
Using  this algorithm,
we can compute the set $\CCCC(\Phi, A, G)$ 
of connected components of our moduli 
by computer.
\par
\medskip
This paper is organized as follows.
In Section~\ref{sec:lattices}, 
we collect preliminaries about lattices.
In particular, 
we recall the theory of discriminant forms due to Nikulin~\cite{MR525944}
and its application to the genus theory.
In Section~\ref{sec:conn},
we define the set $\CCCC(\Phi, A, G)$
of $G$-connected components of the moduli 
of marked elliptic $K3$ surfaces
of  fixed type $(\Phi, A)$,
where $G$ is a subgroup of $\Aut(\Phi)$.
We then introduce
a set $\QQQ(\Phi, A)/\mathordsim_G$,
which is defined in purely lattice-theoretic terms.
Using the theory of refined period map of marked $K3$ surfaces~\cite[Chapter VIII]{MR2030225},
we show that there exists a natural bijection between
$\CCCC(\Phi, A, G)$ and $\QQQ(\Phi, A)/\mathordsim_G$.
In Section~\ref{sec:MM}, 
we formulate a refinement of Miranda-Morrison theory,
and interpret the set $\QQQ(\Phi, A)/\mathordsim_G$ 
as a finite disjoint union of certain finite dimensional $\F_2$-vector spaces $\TTTGGG/\mathordsim_{\AG}$,
which are closely related to the Miranda-Morrison group $\MMM$.
Section~\ref{sec:Psip} is the technical core of our algorithm.
We present an algorithm to calculate the spinor norm
of an isometry of a $p$-adic lattice that induces a given automorphism of 
the discriminant form.
The results in Sections~\ref{sec:MM}~and~\ref{sec:Psip} 
establish an algorithm to
calculate the $\F_2$-vector spaces $\TTTGGG/\mathordsim_{\AG}$.
Using this algorithm combined with the results in Section~\ref{sec:conn},
we can compute the set $\CCCC(\Phi, A, G)$.
Applying this calculation to the case $G=\Aut(\Phi)$,
we obtain Tables~I~and~II in Section~\ref{sec:tables}.
In Section~\ref{sec:examples}, 
we investigate some examples in detail.
\par
The data computed in this paper is available from the author's web-page~\cite{compdataConnEllK3}.
For the computation, we used~{\tt GAP}~\cite{GAP}.
\par
Thanks are due to Alex Degtyarev, Klaus Hulek  and Michael L\"onne for many discussions.
The author also thanks the referee for many comments and suggestions
on the first version of this paper.
\par
\medskip
{\bf Convention.}
In this paper,
a homomorphism $f\colon M\to M\sprime$
of abelian groups is written as $v\mapsto v^f$.
In particular,
we denote 
the composite of $f\colon M\to M\sprime$ and $f\sprime\colon M\sprime\to M\spprime$ 
by $ff\sprime$ or $f\cdot f\sprime$.

\section{Lattices}\label{sec:lattices}
We fix notions and notation about lattices,
and recall some classical results.
Let $R$ be either $\Z$, $\Q$, $\Zp$, $\Qp$, or $\R$,
and let $k$ be the quotient field of $R$.
\subsection{Gram matrix}
An \emph{$R$-lattice} is a free $R$-module $L$ of finite rank
equipped with a non-degenerate symmetric bilinear form
$$
\intf{\phantom{a}, \phantom{a}}\colon L\times L \to R.
$$
Let $L$ be an $R$-lattice of rank $n$.
By choosing a basis $e_1, \dots, e_n$ of the free $R$-module $L$,
the form $\intf{\phantom{a}, \phantom{a}}$ is expressed by a symmetric 
matrix $M$ of size $n$ whose $(i, j)$-component is $\intf{e_i, e_j}$.
This matrix $M$ is called the \emph{Gram matrix} of $L$ with respect to the basis 
$e_1, \dots, e_n$.
The \emph{discriminant} $\disc(L)$ of $L$ is defined by 
$$
\disc(L):= \det(M) \bmod (R\sptimes)^2 \;\;\in \;\; (R\setminus \{0\})/(R\sptimes)^2.
$$
We denote by $\OG(L)$ the group of isometries of $L$.
By our convention,
the group $\OG(L)$ acts on $L$ from the \emph{right}.
The determinant of matrices representing isometries of $L$
gives rise to a homomorphism
$$
\det \colon \OG(L)\to \Det:=\{\pm 1\}.
$$
Note that $L\tensor k$ has a natural structure of $k$-lattice,
and $\OG(L)$ is naturally embedded in $\OG(L\tensor k)$.
\subsection{Positive sign structure}\label{subsec:positivesignstructure}
Let $L$ be an $\R$-lattice.
It is well-known that $L$ has a diagonal Gram matrix $M$ 
whose diagonal components are $\pm 1$,
and that the number $s_+$ of $+1$ (resp. $s_-$ of $-1$)  on the diagonal 
is independent of the choice of $M$.
The \emph{signature} $\sign(L)$ of $L$ is $(s_+, s_-)$.
We say that $L$ is \emph{indefinite} if $s_+>0$ and $s_->0$,
whereas $L$ is \emph{positive {\rm or} negative definite} if $s_-=0$ or $s_+=0$, respectively.
We say that $L$ is \emph{hyperbolic} if $s_+=1$.
\par 
According to~\cite{MR842425},
we define 
a \emph{positive sign structure} of $L$ to be a choice of one of the connected components
of the manifold parametrizing \emph{oriented}
$s_+$-dimensional subspaces $\Pi$ of $L$ such that
the restriction $\intf{\phantom{a}, \phantom{a}}|_{\Pi}$ of $\intf{\phantom{a}, \phantom{a}}$ to $\Pi$ is positive-definite.
Unless $L$ is negative-definite,
 $L$ has exactly two positive sign structures.
 \par
The signature and the positive sign structures of a $\Z$- or a $\Q$-lattice $L$ are defined to be those of $L\tensor\R$.
The orthogonal group $\OG(L)$ acts on the set of positive sign structures of $L$ in a natural way.
\subsection{Discriminant form}
The theory of discriminant forms was developed by Nikulin in~\cite{MR525944}.
A \emph{finite quadratic form} is a quadratic form
$$
q\colon D \to \Q/2\Z,
$$
where $D$ is a finite abelian group.
The \emph{length} $\leng(D)$ of $D$ is 
the minimal number of generators of $D$.
Let $\Dq{}$ be a  finite quadratic form.
We say that $\Dq{}$ is \emph{non-degenerate}
if the associated symmetric bilinear form 
$b\colon D\times D \to \Q/\Z$ is non-degenerate, where
$$
b(x, y):=\frac{1}{2}(q( x+ y)-q( x)-q( y)).
$$
We denote by $\OGDq{}$
the automorphism group of $\Dq{}$.
Note again that $\OGDq{}$ acts on $\Dq{}$ from the \emph{right}.
\par
Suppose that $R$ is either $\Z$ or $\Zp$,
and let $L$ be an $R$-lattice of rank $n$.
The \emph{dual lattice} $L\dual$ of $L$ is defined by 
\begin{equation}\label{eq:Ldual}
L\dual:=\set{x\in L\tensor k}{\intf{x, v}\in R\;\; \textrm{for all}\;\; v\in L},
\end{equation}
which is a free $R$-module of rank $n$
containing $L$ as a submodule of finite index.
The dual lattice $L\dual$ has
a natural $k$-valued non-degenerate symmetric bilinear form
that extends the $R$-valued form $\intf{\phantom{a}, \phantom{a}}$ on $L$.
We put
$$
D_L:=L\dual/L,
$$
and call it the \emph{discriminant group} of $L$.
We say that $L$ is \emph{unimodular} if $D_L$ is trivial.
When $R$ is $\Z$, 
the order of $D_L$ is equal to $|\rmdisc(L)|$.
\par
We say that $L$ is \emph{even} if $\intf{x,x}\in 2R$ holds for all $x\in L$.
(When $R=\Zp$ with $p$ odd,
every $R$-lattice is even.
A $\Z$-lattice $L$ is even if and only if the $\Z_2$-lattice $L\tensor\Z_{2}$ is even.)
Note that
we have a natural isomorphism
 $$
 \Q/2\Z\;\;\cong\;\; \bigoplus_{p} \Qp/2\Zp.
 $$
Hence, when $R=\Z_p$, we can regard $k/2R$ as a submodule of  $\Q/2\Z$.
Suppose that $L$ is even.
Then the \emph{discriminant form} 
$$
q_L\colon D_L\to \Q/2\Z
$$
of $L$ is a finite quadratic form defined 
by $q_L(\bar x):=\intf{x, x} \bmod 2R$,
where $\bar x\in D_L$ denotes $x \bmod L$ for $x\in L\dual$.
Since $\intf{\phantom{a}, \phantom{a}}$ is non-degenerate, the finite quadratic form $\Dq{L}$ is non-degenerate.
If $\varphi\colon L\isom L\sprime$ is an isometry of even $R$-lattices,
then $\varphi$ induces 
an isomorphism $L\dual \isom L^{\prime\vee}$
and hence an isomorphism 
$$
\funcq{\varphi} \colon \Dq{L}\isom \Dq{L\sprime}
$$
of their discriminant forms.
In particular, we have a natural homomorphism
$$
\OG(L) \to \OGDq{L}.
$$
\begin{remark}
If we adapt $L\dual :=\Hom(L, R)$ as the definition 
of the dual lattice,
it is natural to say that an isometry
$\varphi\colon L\isom L\sprime$
induces contravariantly an isomorphism $\Dq{L\sprime}\isom \Dq{L}$.
Under the present definition~\eqref{eq:Ldual}, however, 
the functor $\varphi\mapsto \funcq{\varphi}$ is covariant.
\end{remark}
\subsection{Roots}\label{subsec:roots}
Let $L$ be an even $\Z$-lattice.
A vector $r\in L$ is said to be a \emph{root} of $L$ if
$\intf{r, r}=-2$.
We put
$$
\Roots(L):=\set{r\in L}{\intf{r, r}=-2}.
$$
Let $\Phi=\{r_1, \dots, r_m\}$ be a set of roots of $L$.
Suppose that $\intf{r_i, r_j}\in \{0, 1\}$ holds for any $i\ne j$.
The \emph{dual graph} of $\Phi$ is the graph whose set of vertices
is $\Phi$ and whose set of edges is the set of pairs $\{r_i, r_j\}$
such that $\intf{r_i, r_j}=1$.
We say that $\Phi$ is an \emph{$ADE$-configuration}
if each connected component of the dual graph of $\Phi$
is a Dynkin diagram of type 
$A_{l}$ ($l\ge 1$),
$D_{m}$ ($m\ge 4$), or 
$E_{n}$ ($n=6, 7, 8$).
(See~\cite[Figure~1.7]{MR2977354} for the definition of these Dynkin diagrams).
Let $\Phi$ be an $ADE$-configuration.
The formal sum of the types $A_l, D_m, E_n$ of the connected components 
of the dual graph is called the \emph{$ADE$-type of $\Phi$}.
An isomorphism of $ADE$-configurations $\Phi$ and $\Phi\sprime$
is a bijection $\gamma\colon \Phi\isom \Phi\sprime$
such that $\intf{r^\gamma, r^{\prime\gamma}}=\intf{r, r\sprime}$
holds for all $r, r\sprime\in \Phi$.
An isomorphism class of $ADE$-configurations is determined by the $ADE$-type.
The automorphism group $\Aut(\Phi)$ of an $ADE$-configuration $\Phi$
is just the automorphism group of the dual graph of $\Phi$.
\par
A negative definite even $\Z$-lattice $L$ is said to be a \emph{root lattice}
if $L$ is generated by $\Roots(L)$.
We have the following classical result. See~\cite[Theorem~1.2]{MR2977354}, for example.
\begin{proposition}
Let $L$ be a root lattice.
Then there exists an $ADE$-configuration $\Phi\subset \Roots(L)$
that forms a basis of $L$.
\end{proposition}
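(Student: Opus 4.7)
The plan is to adapt the classical Weyl--Dynkin construction of a simple system of roots to the sign convention of the paper (roots of square-norm $-2$, negative definite form). Since $\Roots(L)$ is finite, I would first choose a linear functional $\ell \colon L \tensor \R \to \R$ not vanishing on any root, and set $\Roots^+ := \{r \in \Roots(L) : \ell(r) > 0\}$, so that $\Roots(L) = \Roots^+ \sqcup (-\Roots^+)$. Then $\Phi \subset \Roots^+$ is defined as the subset of roots that cannot be written as $r_1 + r_2$ with $r_1, r_2 \in \Roots^+$. A straightforward induction on the finite value set $\ell(\Roots^+) \subset \R$ shows that every element of $\Roots^+$ is a nonnegative integer linear combination of elements of $\Phi$.

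The core lattice-theoretic step is to check that $\intf{r, r'} \in \{0, 1\}$ for distinct $r, r' \in \Phi$. Since $L$ is negative definite, the Cauchy--Schwarz inequality yields $\intf{r, r'}^2 \le \intf{r,r}\intf{r',r'} = 4$ with equality only if $r' = \pm r$; two distinct elements of $\Roots^+$ cannot be negatives of each other, so $\intf{r, r'} \in \{-1, 0, 1\}$. The value $-1$ is ruled out because then $\intf{r-r', r-r'} = -2$, making $r - r'$ a root, and depending on the sign of $\ell(r-r')$ either $r = r' + (r - r')$ or $r' = r + (r' - r)$ would decompose $r$ or $r'$ into a sum of two positive roots, contradicting membership in $\Phi$.

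Next, I would verify that $\Phi$ is a $\Z$-basis of $L$. For linear independence, a relation $\sum c_i r_i = 0$ produces $v := \sum_{c_i > 0} c_i r_i = \sum_{c_j < 0} (-c_j) r_j$; expanding $\intf{v, v}$ as a sum of terms $c_i (-c_j) \intf{r_i, r_j} \ge 0$ gives $\intf{v, v} \ge 0$, hence $v = 0$ by negative definiteness, after which applying $\ell$ to $v = 0$ forces all coefficients to vanish. Spanning over $\Z$ then follows because $L$ is generated by $\Roots(L)$ and every root is $\pm$ a nonnegative integer combination of $\Phi$.

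Finally, I would identify the connected components of the dual graph of $\Phi$ with Dynkin diagrams of type $A$, $D$, or $E$. For this I would invoke the classical Cartan--Killing classification of simply-laced, negative-definite root configurations, as recorded in \cite[Theorem~1.2]{MR2977354}. This classification is the deepest input of the argument and is the main obstacle to a self-contained proof; its standard proof proceeds by eliminating subdiagrams whose Gram form would fail to be negative definite (cycles, vertices of degree $\ge 4$, multiple branch points, and overly long extended branches).
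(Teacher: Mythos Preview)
Your argument is the standard Weyl--Dynkin construction and is correct as outlined; each step (choice of generic linear functional, definition of simple roots as indecomposable positives, the Cauchy--Schwarz bound combined with the decomposition trick to exclude $\intf{r,r'}=-1$, the positivity argument for linear independence, and the final appeal to the simply-laced Cartan--Killing classification) is sound in the negative-definite, square-norm $-2$ convention used here.

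Note, however, that the paper does not supply its own proof of this proposition at all: it simply states the result as classical and refers the reader to \cite[Theorem~1.2]{MR2977354}. Your outline is essentially a sketch of what one finds in that reference, so there is no genuine difference in approach to compare; you have reconstructed the cited proof rather than offered an alternative to one in the paper.
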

The $ADE$-configuration $\Phi$ in this proposition is called a \emph{fundamental root system} of the root lattice $L$.
When an $ADE$-configuration $\Phi$ is given,
we denote by $\Lgen{\Phi}$ the root lattice with a fundamental root system $\Phi$.
%
\subsection{Even $\Zp$-lattices}\label{subsec:evenZplattices}
\begin{table}
\begin{tabular}{cll}
 \mystruthd{0pt}{6pt}
When $p$ is odd:\\
\hline 
\mystruthd{11pt}{5pt}
Name&$\Dq{}$&Brown invariant \\
\hline
$w_{p, \nu}^1$ & $\left(\Z/p^\nu\Z, \left[\dfrac{2}{p^\nu}\right]\right)$ &
$\begin{cases}
0 & \textrm{if $\nu$ is even}, \\
1-(-1)^{(p-1)/2} & \textrm{if $\nu$ is odd}, \\
\end{cases}$ \mystrutd{16pt}\mystruth{20pt}\\
\hline
$w_{p, \nu}^{-1}$ & $\left(\Z/p^\nu\Z, \left[\dfrac{2n_p}{p^\nu}\right]\right)$ &
$\begin{cases}
0 & \textrm{if $\nu$ is even}, \\
-3-(-1)^{(p-1)/2} & \textrm{if $\nu$ is odd}. \\
\end{cases}$ \mystrutd{16pt}\mystruth{20pt}\\
\hline 
\end{tabular}
\par
\medskip
\begin{tabular}{cll}
 \mystruthd{0pt}{6pt}
When $p=2$:\\
\hline 
\mystruthd{11pt}{5pt}
Name&$\Dq{}$&Brown invariant \\
\hline
\mystruthd{17pt}{12pt}
$w_{2, \nu}^{\varepsilon}$ & $\left(\Z/2^\nu\Z, \;\;\left[\dfrac{\varepsilon}{2^\nu}\right]\right)$ & $\varepsilon+\nu(\varepsilon^2-1)/2$
\\
\hline
\mystruthd{17pt}{12pt}
$u_{\nu}$ & $\left(\Z/2^\nu\Z\times \Z/2^\nu\Z, \;\; \dfrac{1}{2^\nu} \left[\begin{array}{cc} 0&1\\1&0\end{array}\right]\right)$ & 0
\\
\hline
\mystruthd{17pt}{12pt}
$v_{\nu}$ & $\left(\Z/2^\nu\Z\times \Z/2^\nu\Z, \;\; \dfrac{1}{2^\nu} \left[\begin{array}{cc} 2&1\\1&2\end{array}\right]\right)$ & $4\nu$
\\
\hline 
\end{tabular}%
\par
\bigskip
\parbox{10cm}{In this table, $\nu$ runs through $\Z_{>0}$.
When $p$ is odd, 
$n_p\in \Z$ represents a non-square residue in $\F_p\sptimes$.
When $p=2$, $\varepsilon\in \{1,3,5,7\}$ if $\nu>1$, whereas $\varepsilon\in \{1,3\}$ if $\nu=1$.}
\par
\bigskip
\caption{Indecomposable $p$-adic finite quadratic forms}\label{table:indecomppfqf}
\end{table}%
Let $p$ be a prime integer.
The isomorphism classes of even $\Zp$-lattices and their discriminant forms are 
well-understood.
See~\cite{MR525944} or~\cite[Chapter IV]{MMB} for details.
\par
We say that a finite quadratic form $q\colon D\to \Q/2\Z$
is \emph{$p$-adic} if the order of $D$ is a power of $p$. 
If $\Dq{}$ is $p$-adic, then the image of $q$ is
included in the subgroup $\Qp/2\Zp\subset \Q/2\Z$.
It is obvious that the discriminant form $\Dq{L}$ 
of an even $\Zp$-lattice $L$ is $p$-adic. 
We have the \emph{normal form theorems}
for non-degenerate $p$-adic finite quadratic forms and for even $\Zp$-lattices.
\begin{proposition}\label{prop:normalformpfqf}
A non-degenerate $p$-adic finite quadratic form is isomorphic to 
an orthogonal direct-sum of indecomposable $p$-adic finite quadratic forms
listed in Table~\ref{table:indecomppfqf}.
\end{proposition}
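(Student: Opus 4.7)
The plan is to prove the normal form theorem by induction on the order of $D$, at each stage splitting off an orthogonal direct summand that is isomorphic to one of the pieces $w_{p,\nu}^{\pm 1}$, $u_\nu$, or $v_\nu$ listed in Table~\ref{table:indecomppfqf}. Since $D$ is $p$-primary and finite, it is a finite-length $\Zp$-module, so the non-degenerate bilinear form $b$ puts $D$ in duality with itself; this will be the engine that turns the existence of a suitable element into the existence of an orthogonal direct summand.

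The core reduction step is a splitting lemma: if $x\in D$ has order $p^\nu$ equal to the exponent of $D$ and the restriction of $q$ (or of $b$) to $\langle x\rangle$ has ``full denominator'' $p^\nu$, then $\langle x\rangle$ is non-degenerate and hence $D = \langle x\rangle\oplus \langle x\rangle^\perp$; one then applies the induction hypothesis to $\langle x\rangle^\perp$. To prove this splitting, pick any $y\in D$; by non-degeneracy of $b$ on $\langle x\rangle$ one can subtract off a multiple of $x$ to land in $\langle x\rangle^\perp$, and uniqueness of this decomposition is immediate from $\langle x\rangle\cap\langle x\rangle^\perp=0$. The remaining task is to produce, at each stage, such an $x$ (or a 2-element analogue), and to compute the isomorphism type of $\langle x\rangle$ using the classification of units in $\Z_p^\times$ modulo squares.

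For $p$ odd this is routine: $\Z_p^\times/(\Z_p^\times)^2 = \{1, n_p\}$ has order two, so any cyclic non-degenerate $p$-adic form on $\Z/p^\nu\Z$ is isomorphic to $w_{p,\nu}^1$ or $w_{p,\nu}^{-1}$; and by a pigeonhole-type argument, one can always find $x$ of maximal exponent such that $q(x)$ has denominator $p^\nu$, simply because not every element of order $p^\nu$ can pair trivially with all elements of order $p^\nu$. Hence $D$ decomposes as an orthogonal sum of $w_{p,\nu}^{\pm 1}$'s, as claimed.

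The case $p=2$ is the main obstacle, because even when an element $x$ of maximal exponent $2^\nu$ exists, the value $q(x)$ may lie in $(2/2^\nu)\Z_2/2\Z_2$, in which case $\langle x\rangle$ is degenerate and cannot be split off as a rank-one summand. In this situation one must instead look at the $\F_2$-quadratic form induced by $q$ on the subquotient $2^{\nu-1}D/2^\nu D$ (viewed inside $D$), and show that one can find a pair $\{x,y\}$ spanning a rank-two non-degenerate summand; the two possible isomorphism types of such a rank-two form, distinguished by whether the induced $\F_2$-form is hyperbolic or elliptic, produce $u_\nu$ and $v_\nu$ respectively. The casework here is the standard construction (going back to Wall and carried out in detail in~\cite{MR525944} and~\cite[Chapter IV]{MMB}): after extracting all rank-one summands $w_{2,\nu}^\varepsilon$ that can be extracted, what remains has the property that $q$ is ``even modulo $2^{1-\nu}$'' on every element of maximal order, and this evenness forces the orthogonal extraction of a $u_\nu$ or $v_\nu$ block. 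Iterating this dichotomy down through the exponents exhausts $D$ and yields the asserted orthogonal decomposition.
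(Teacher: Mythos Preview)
The paper does not actually prove this proposition: it states the result and immediately defers to \cite[Chapter~IV]{MMB} (and implicitly to Nikulin~\cite{MR525944}) for the proof and the accompanying algorithm. Your sketch is precisely the standard argument carried out in those references --- induction on $|D|$, splitting off a cyclic summand of maximal exponent whenever $b|_{\langle x\rangle}$ is non-degenerate, and for $p=2$ falling back to a rank-two block $u_\nu$ or $v_\nu$ when every element of maximal order has $q(x)\in 2^{1-\nu}\Zp/2\Zp$ --- so there is nothing to compare: you have supplied exactly what the paper outsources.

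Two minor points of precision, neither fatal. First, your justification in the odd-$p$ case (``not every element of order $p^\nu$ can pair trivially with all elements of order $p^\nu$'') is a statement about $b$, not about $q$; to conclude that some $x$ of maximal order has $\ordp q(x)=-\nu$ you still need the polarization identity $q(x+y)=q(x)+q(y)+2b(x,y)$ together with the fact that $2\in\Zp^\times$ for $p$ odd. Second, the ``subquotient $2^{\nu-1}D/2^\nu D$'' you invoke for $p=2$ is, when $D$ has exponent $2^\nu$, just the subgroup $2^{\nu-1}D$ of $2$-torsion; the relevant $\F_2$-quadratic space in the standard treatment is rather the top graded piece $D_\nu := \{x: 2^\nu x=0\}/\{x:2^{\nu-1}x=0\}$ with the rescaled form $2^{\nu-1}q$, on which one reads off the hyperbolic/elliptic dichotomy giving $u_\nu$ versus $v_\nu$. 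With those adjustments your outline is correct and matches the cited literature.
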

More precisely,
we have an algorithm to decompose a given non-degenerate $p$-adic finite quadratic form
into an orthogonal direct-sum of indecomposable ones.
See~\cite[Chapter~IV]{MMB}.
\begin{proposition}\label{prop:normalformZplat}
An even $\Zp$-lattice is isomorphic to 
an orthogonal direct-sum of indecomposable even $\Zp$-lattices
whose Gram matrices are listed below.
\par
When $p$ is odd:
$$
[ 2 p^\nu]\;\;\textrm{or}\;\;\; [ 2 p^\nu n_p],
$$
where $\nu$ runs through $\Z_{\ge 0}$,
and $n_p\in \Z$ represents a non-square residue in $\F_p\sptimes$.
\par
When $p=2$:
$$
2^{\nu}[ 2\varepsilon],
\;\textrm{or}\;\; \;
2^{\nu}\left[ \begin{array}{cc} 0 & 1 \\ 1 & 0 \end{array}\right],
\;\textrm{or}\;\;\;
2^{\nu}\left[ \begin{array}{cc} 2 & 1 \\ 1 & 2 \end{array}\right], 
$$
where $\nu$ runs through $\Z_{\ge 0}$, and $\varepsilon\in \{1,3,5,7\}$.
\par
An indecomposable even $\Zp$-lattice $L$ in the list above is unimodular
if and only if $\nu=0$ and $L\not\cong [ 2\varepsilon]$.
\end{proposition}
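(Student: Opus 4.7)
The plan is to prove Proposition~\ref{prop:normalformZplat} by induction on the rank $n$ of the even $\Zp$-lattice $L$, exhibiting at each step an orthogonal decomposition $L = L_0 \oplus L_1$ in which $L_0$ has Gram matrix of one of the shapes listed in the statement. The argument then splits according to the parity of $p$.

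When $p$ is odd, I would exploit the fact that $2$ is a unit in $\Zp$ to perform Gram--Schmidt over $\Zp$. Concretely, pick $e_1 \in L$ minimising $\ordp \intf{e_1, e_1}$; the minimality forces $\intf{L, e_1} \subset \intf{e_1, e_1}\Zp$, so $x \mapsto x - (\intf{x, e_1}/\intf{e_1, e_1})\,e_1$ is a well-defined $\Zp$-linear retraction onto $(\Zp e_1)^\perp$, giving $L = \Zp e_1 \oplus (\Zp e_1)^\perp$. Writing $\intf{e_1, e_1} = 2 p^\nu u$ with $u \in \Zp\sptimes$, and using that $\Zp\sptimes/(\Zp\sptimes)^2 = \{1, n_p\}$, a $\Zp\sptimes$-rescaling of $e_1$ normalises $u$ to one of those two classes. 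Applying the inductive hypothesis to $(\Zp e_1)^\perp$ finishes this case.

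The case $p = 2$ is more delicate, because $L$ need not admit an orthogonal basis and the rank-$2$ indecomposable summands genuinely arise. Let $\mu$ be the minimum of $\ord_2 \intf{x, y}$ over $x, y \in L$. If some $e \in L$ satisfies $\ord_2 \intf{e, e} = \mu + 1$ (the smallest value possible, since $L$ is even), a direct check shows that $\intf{e, e} \mid \intf{e, x}$ for all $x \in L$, so $\Z_2 e$ splits off as a rank-$1$ summand $[2^{\mu+1}\varepsilon]$, with $\varepsilon \in \{1, 3, 5, 7\}$ the class of $\intf{e, e}/2^{\mu+1}$ in $\Z_2\sptimes/(\Z_2\sptimes)^2$. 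Otherwise every self-intersection has $\ord_2 \ge \mu + 2$; one picks $e_1, e_2 \in L$ with $\ord_2 \intf{e_1, e_2} = \mu$, observes that the Gram matrix of $\Z_2 e_1 + \Z_2 e_2$ has the shape
$$
2^{\mu}\left[\begin{array}{cc} 2a & u \\ u & 2c \end{array}\right], \qquad a, c \in 2\Z_2, \;\; u \in \Z_2\sptimes,
$$
and brings it by elementary base changes into one of the two hyperbolic forms $2^{\mu}\bigl[\begin{smallmatrix} 0 & 1 \\ 1 & 0 \end{smallmatrix}\bigr]$ or $2^{\mu}\bigl[\begin{smallmatrix} 2 & 1 \\ 1 & 2 \end{smallmatrix}\bigr]$, the choice being dictated by the class of $ac$ modulo $2\Z_2$. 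The minimality of $\mu$ then guarantees that this rank-$2$ sublattice is a direct summand, and the induction proceeds.

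The unimodularity assertion is a direct inspection of the discriminants: $2p^\nu$ or $2p^\nu n_p$ for odd $p$; the unit determinants $-2^{2\nu}$ and $3 \cdot 2^{2\nu}$ of the rank-$2$ pieces at $p = 2$; and $2^{\nu+1}\varepsilon$ for $2^{\nu}[2\varepsilon]$, which is never a $2$-adic unit. The main obstacle is the $p = 2$ case when no self-intersection attains the minimal valuation: there one must carefully verify both that the rank-$2$ sublattice spanned by $e_1, e_2$ is a direct summand and that only the two hyperbolic normal forms can occur at a given scale $2^\mu$. Both points reduce to careful $2$-adic bookkeeping of square classes in $\Z_2\sptimes$, and I would follow the detailed treatment in Chapter~IV of~\cite{MMB}.
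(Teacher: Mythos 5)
Your $p$-odd argument is correct and is the standard Gram--Schmidt reduction (using that $2\in\Zp^\times$ to diagonalize and that $\Zp^\times/(\Zp^\times)^2=\{1,n_p\}$), and the discriminant inspection for the unimodularity assertion is also fine. Note that the paper itself offers no proof of this proposition — it simply cites~\cite[Chapter~IV]{MMB}, the same source you defer to at the end — so the comparison is really against that textbook argument.

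However, your sketch for $p=2$ is broken, not merely incomplete. First, the parenthetical claim that $\ord_2\intf{e,e}\ge\mu+1$ for every $e\in L$ is false: if $\mu\ge 1$ the diagonal can attain $\mu$ (already $L=[2]$ has $\mu=1$ realized by $\intf{e,e}$), and that is in fact the easy case where a rank-one summand $[2^{\mu}\varepsilon]$ splits off. Second, and more seriously, your ``direct check'' in Case~1 is wrong: $\ord_2\intf{e,e}=\mu+1$ does \emph{not} force $\intf{e,e}\mid\intf{e,x}$. For $L=V=\left[\begin{smallmatrix}2&1\\1&2\end{smallmatrix}\right]$ one has $\mu=0$, $\ord_2\intf{e_1,e_1}=1=\mu+1$, but $\intf{e_1,e_1}=2\nmid 1=\intf{e_1,e_2}$; and $V$ is indecomposable, so no rank-one split-off exists. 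The same applies to $U$: one computes $\ord_2\intf{v,v}\ge 1$ with equality for $v=e_1+e_2$, so $U$ also lands in your Case~1, where a rank-one split-off is again impossible. Third, your Case~2 is in fact vacuous: for any nonzero even $\Z_2$-lattice of scale $\mu$, picking $e_1,e_2$ with $\ord_2\intf{e_1,e_2}=\mu$ and looking at $\intf{e_1+e_2,e_1+e_2}=\intf{e_1,e_1}+2\intf{e_1,e_2}+\intf{e_2,e_2}$ shows that some diagonal value has $\ord_2\le\mu+1$. Consequently your algorithm always enters Case~1, always attempts an (impossible) rank-one split-off, and can never produce the rank-two summands $U$ or $V$ at all. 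Relatedly, the invariant ``$ac\bmod 2\Z_2$'' you propose in Case~2 is identically zero once you impose $a,c\in 2\Z_2$, so it could not have distinguished $U$ from $V$ anyway.

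The correct dichotomy at scale $\mu$ is whether the minimal self-intersection valuation equals $\mu$ (then split off a rank-one summand $[2^{\mu}\varepsilon]$) or equals $\mu+1$ (then split off the rank-two sublattice $\Z_2 e_1+\Z_2 e_2$, which one checks is a direct summand because its Gram determinant has $\ord_2=2\mu$; it is $2^{\mu}U$ or $2^{\mu}V$ according to whether $2^{-2\mu}\det$ lies in the square class of $-1$ or of $3$ in $\Z_2^\times/(\Z_2^\times)^2$). It is precisely the case $\ord_2\intf{e_i,e_i}=\mu+1$ — your Case~1 — that yields $V$.
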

Moreover,
we have an algorithm to determine whether 
two given orthogonal direct-sums of these indecomposable objects
are isomorphic or not.
(See~\cite[Chapter~IV]{MMB}.)
In particular,
for a given positive integer $r$,
a given element $d\in (\Zp\setminus \{0\})/(\Zp\sptimes)^2$,
and a given non-degenerate $p$-adic finite quadratic form $\Dq{}$,
we can easily determine
whether there exists an even $\Zp$-lattice $L$
such that $\rank (L)=r$, $\disc(L)=d$, and $\Dq{L}\cong\Dq{}$,
and if it exists,
we can write a Gram matrix of such an even  $\Zp$-lattice explicitly
(see~Section~\ref{subsec:step2}).
As corollaries, we obtain the following:
\begin{proposition}\label{prop:isomclassZp}
The isomorphism class of an even $\Zp$-lattice $L$ is determined by
$\rank(L)$, $\disc(L)\in (\Zp\setminus \{0\})/(\Zp\sptimes)^2$, 
and the isomorphism class of the discriminant form $\Dq{L}$.
\end{proposition}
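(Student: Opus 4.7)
The plan is to derive the statement as a corollary of the normal form theorem (Proposition~\ref{prop:normalformZplat}) by separating $L$ into a unimodular part and a non-unimodular part and recovering each from the given invariants. First I would write $L \cong L_u \oplus L_n$ with $L_u$ an orthogonal sum of the unimodular indecomposables listed there and $L_n$ an orthogonal sum of the indecomposables with $\nu\ge 1$. Because $L_u$ is unimodular, $\Dq{L_u}$ is trivial and $\Dq{L} \cong \Dq{L_n}$, so the finite quadratic form $\Dq{L}$ depends only on the non-unimodular summand.

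The first key step is to show that $L_n$ is determined up to isomorphism by $\Dq{L_n}$. For $p$ odd, the indecomposable summands $[2p^\nu\varepsilon]$ of $L_n$ correspond to the indecomposable $p$-adic forms $w_{p,\nu}^{\pm 1}$ of $\Dq{L_n}$ (Proposition~\ref{prop:normalformpfqf}); the multiset of $\nu$'s is read off uniquely from $\Dq{L_n}$, and while the individual signs are not canonical, the ambiguity $w_{p,\nu}^1 \oplus w_{p,\nu}^1 \cong w_{p,\nu}^{-1} \oplus w_{p,\nu}^{-1}$ on the form side matches the lattice-level equivalence $[2p^\nu]^{\oplus 2}\cong[2p^\nu n_p]^{\oplus 2}$, so $L_n$ is well-defined. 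For $p=2$ both decompositions exhibit richer non-uniqueness, but the Miranda-Morrison isomorphism algorithm cited after Proposition~\ref{prop:normalformZplat} matches equivalences on the lattice side with those on the form side, so $L_n$ is again determined by $\Dq{L_n}$.

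The second key step is that a unimodular even $\Zp$-lattice is determined by its rank and its discriminant in $\Zp\sptimes/(\Zp\sptimes)^2$. For $p$ odd, $L_u \cong [2]^{\oplus a}\oplus[2n_p]^{\oplus b}$ with $a+b=\rank(L_u)$, and the identity $[2n_p]^{\oplus 2}\cong[2]^{\oplus 2}$ (following from $n_p$ being a sum of two squares in $\Zp$ by Hensel's lemma) reduces $b$ to $0$ or $1$, distinguished by the square class of $\disc(L_u)$. For $p=2$, the two unimodular indecomposables $U,V$ from Proposition~\ref{prop:normalformZplat} both have rank two and satisfy $V^{\oplus 2}\cong U^{\oplus 2}$ over $\Z_2$, leaving at most one copy of $V$, again pinned down by the discriminant. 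Since $\rank(L_n)$ and $\disc(L_n)$ are determined by $\Dq{L_n}$ via the first step, we obtain $\rank(L_u) = \rank(L) - \rank(L_n)$ and $\disc(L_u) = \disc(L)/\disc(L_n)$ from the given invariants of $L$, completing the proof. The main obstacle is the $p=2$ case of the first step: proving that the isomorphism class of $L_n$ is a genuine invariant of $\Dq{L_n}$ despite the non-uniqueness of Jordan decompositions on both sides is the deepest ingredient, and I would quote it from~\cite{MMB} rather than reprove it.
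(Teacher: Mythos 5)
Your high-level plan (split $L$ into a unimodular part $L_u$ and a non-unimodular part $L_n$, then pin each down from the given invariants) is natural, and it works cleanly for $p$ odd. However, your first key step contains a genuine error for $p=2$: it is \emph{not} true that $L_n$ is determined by $\Dq{L_n}$ alone, and this is not something one can quote from \cite{MMB} because it is false. A concrete counterexample at scale $2^0$: the $\Z_2$-lattices $[2]$ and $[10]$ (Gram matrices $(2)$ and $(10)$) both have discriminant group $\Z/2\Z$ with generator $\frac{1}{2}$, resp.\ $\frac{1}{10}$, and one computes $q(\frac{1}{2})=\frac{1}{2}$ and $q(\frac{1}{10})=\frac{1}{10}\equiv\frac{5}{2}\equiv\frac{1}{2}\pmod{2\Z_2}$, so both have discriminant form $w_{2,1}^{1}$. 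Yet they are non-isomorphic, since $\disc([2])=2$ and $\disc([10])=10$ lie in different square classes ($5$ is not a square in $\Z_2$). Worse, the non-unimodular constituent is not even a well-defined isomorphism invariant of $L$: taking $L=[2]\oplus U\cong[10]\oplus V$ over $\Z_2$ (both have rank $3$, discriminant in the class of $14$, and discriminant form $w_{2,1}^{1}$, so by the very proposition being proved they coincide), one sees that the "unimodular part" can be $U$ or $V$ and the "non-unimodular part" can be $[2]$ or $[10]$, depending on the chosen Jordan splitting.

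This is precisely the phenomenon (Conway--Sloane's "sign walking") that makes the $2$-adic classification delicate: unit factors can be traded between adjacent Jordan constituents, so the unimodular and non-unimodular parts cannot be normalized independently. Your argument needs either (a) to keep track of $\disc(L_n)$ and $\disc(L_u)$ simultaneously and show that every consistent choice produces the same $L$, which is essentially the full content of the $p=2$ normal-form theory, or (b) to abandon the decomposition and cite the global statement. For what it's worth, the paper itself does not attempt the decomposition: it simply records Proposition~\ref{prop:isomclassZp} as a corollary of the normal-form theory together with the isomorphism-testing algorithm of \cite[Chapter~IV]{MMB}, citing that reference for the substance. Your $p$-odd analysis and your treatment of the unimodular part in Step~2 are correct as stated; only the $p=2$ claim in Step~1 needs to be repaired.
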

\begin{proposition}\label{prop:etasurjZp}
If $L$ is an even $\Zp$-lattice, 
then the natural homomorphism $\OG(L) \to \OGDq{L}$
is surjective.
\end{proposition}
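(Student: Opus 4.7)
I would argue by induction on $\rank(L)$, using the normal form decomposition of Proposition~\ref{prop:normalformZplat} to split off an indecomposable orthogonal summand $L_0$, so that $L = L_0 \oplus L'$ with induced splitting $\Dq{L} = \Dq{L_0} \oplus \Dq{L'}$. The base step treats the indecomposables directly: every rank-one summand has $\OG(L_0) = \{\pm 1\}$, and a glance at Table~\ref{table:indecomppfqf} shows that its cyclic discriminant form admits only $\pm\id$ as automorphisms, so the map is trivially surjective. For the two rank-two $2$-adic indecomposables with discriminant forms $u_\nu$ and $v_\nu$, the group $\OGDq{L_0}$ is finite and its generators can be lifted to elements of $\OG(L_0)$ by an explicit matrix computation in $\mathrm{GL}_2(\Z_2)$.

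\textbf{Inductive step.} Given $\phi \in \OGDq{L}$, the crux is to produce a second orthogonal decomposition $L = L_0'' \oplus L''$ with $L_0'' \cong L_0$ such that the image of $(L_0'')\dual/L_0''$ in $\Dq{L}$ coincides with $\phi(\Dq{L_0})$. Once this is available, Proposition~\ref{prop:isomclassZp} applied to $L'$ and $L''$, which share the same rank, discriminant, and discriminant form (namely the orthogonal complement of $\Dq{L_0}$ inside $\Dq{L}$), forces $L' \cong L''$. Combining an isometry $L_0 \isom L_0''$ realizing $\phi|_{\Dq{L_0}}$ with any isometry $L' \isom L''$ produces $\alpha \in \OG(L)$ whose induced $q_\alpha$ agrees with $\phi$ on $\Dq{L_0}$. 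The composition $\phi \cdot q_\alpha\inv$ then preserves the splitting $\Dq{L_0} \oplus \Dq{L'}$ and acts trivially on the first summand, so the base case applied to $L_0$ together with the inductive hypothesis applied to $L'$ produces a lift of $\phi \cdot q_\alpha\inv$, and hence of $\phi$.

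\textbf{Main obstacle.} The hard part is the realization step producing $L_0'' \subset L$ with prescribed discriminant group $\phi(\Dq{L_0})$. This amounts to a Witt-type extension statement for even $\Zp$-lattices: an isometric embedding of $L_0$ into $L$ as an orthogonal summand, whose dual reduces to $\phi(\Dq{L_0})$, exists whenever the target is abstractly isomorphic to $\Dq{L_0}$. For $p$ odd, after diagonalization this reduces to finding a primitive vector of the correct square-norm in $L$ whose class in $L\dual/L$ generates $\phi(\Dq{L_0})$, which is a Hensel-type calculation. For $p=2$ one must carry out a more careful case analysis matching the $w_{2,\nu}^\varepsilon$, $u_\nu$, and $v_\nu$ pieces consistently, and I would rely on the algorithmic uniqueness of normal forms described in~\cite[Chapter~IV]{MMB} to ensure that the lifts on the individual summands glue to an isometry of $L$.
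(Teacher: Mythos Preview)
The paper itself does not prove this proposition: it is stated, together with Proposition~\ref{prop:isomclassZp}, as an immediate corollary of the normal form theorems for $p$-adic finite quadratic forms and even $\Zp$-lattices (Propositions~\ref{prop:normalformpfqf} and~\ref{prop:normalformZplat}), with a blanket reference to~\cite[Chapter~IV]{MMB}. So there is no detailed argument in the paper against which to compare your proposal.

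Your inductive framework is reasonable and your identification of the crux is accurate: everything hinges on the Witt-type realization step, namely that the subgroup $\phi(\Dq{L_0})\subset\Dq{L}$ arises as the discriminant group of an orthogonal direct summand $L_0''\cong L_0$ of $L$. The remainder of your inductive step (using Proposition~\ref{prop:isomclassZp} to identify $L'\cong L''$, then reducing to $\phi\cdot q_\alpha^{-1}$ acting trivially on the first factor) is correct and cleanly argued. Your base case for rank-one summands is also correct: one checks that for the cyclic forms in Table~\ref{table:indecomppfqf} the only automorphisms are $\pm\id$.

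The gap is that the realization step is not actually established. You describe what it would take (a primitive vector of prescribed norm and prescribed class in $L\dual/L$, plus the orthogonal-summand condition $\langle v,L\rangle\subset\langle v,v\rangle\Zp$), and you indicate that Hensel-type arguments or case analysis should work, but you do not carry this out; in the last sentence you defer to~\cite[Chapter~IV]{MMB}, which is the same reference the paper invokes. In effect the induction localizes the difficulty to one indecomposable summand at a time but does not bypass it: the realization step \emph{is} the essential content of the surjectivity. So what you have written is a correct reduction rather than a proof, and it leans on the same source the paper cites.
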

\subsection{Even overlattices}\label{subsec:overlattice}
Suppose that $L$ is an even $\Z$-lattice.
An \emph{even overlattice} of $L$ is a $\Z$-submodule $M$ of $L\dual$ containing $L$
such that the natural $\Q$-valued symmetric bilinear form on $L\dual$ takes values in $\Z$ on $M$,
and
that $M$ is an even $\Z$-lattice by this $\Z$-valued form.
The following theorem is due to Nikulin~\cite{MR525944}.
\begin{proposition}\label{prop:isot}
Let $L$ be an even $\Z$-lattice,
and let $\pr_L\colon L\dual \to D_L$ denote the natural projection.
Then the mapping $K\mapsto \pr_L\inv(K)$
gives rise to a bijection from 
the set of totally isotropic subgroups $K\subset D_L$ of $\Dq{L}$ 
to the set of even overlattices of $L$.
\end{proposition}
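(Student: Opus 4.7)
The plan is to identify even overlattices of $L$ with totally isotropic subgroups of $D_L$ by first showing that every even overlattice sits inside $L\dual$, and then translating the integrality and evenness conditions on $M$ into a single isotropy condition on $K=M/L$.

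First I would observe that every even overlattice $M$ automatically satisfies $M\subset L\dual$: for $x\in M$ and any $v\in L\subset M$, integrality of the form on $M$ forces $\intf{x,v}\in\Z$, which is the defining condition~\eqref{eq:Ldual} for $L\dual$. Therefore even overlattices correspond to $\Z$-submodules $M$ with $L\subset M\subset L\dual$, and the assignments $M\mapsto M/L$ and $K\mapsto \pr_L\inv(K)$ are plainly mutually inverse bijections between such submodules and subgroups $K\subset D_L$. It then remains to pin down which $K$ yield even integral overlattices.

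For this translation, I would use that integrality of the $\Q$-valued form on $M$ is equivalent, via $b_L(\bar x,\bar y)=\intf{x,y}\bmod\Z$, to the vanishing of $b_L$ on $K\times K$, while evenness is equivalent to $q_L(\bar x)=0$ in $\Q/2\Z$ for every $\bar x\in K$. The only nontrivial step — and the point I expect to be most delicate — is that $q_L$-isotropy on $K$ already forces $b_L$-isotropy on $K$. This follows from the polarization identity $q_L(\bar x+\bar y)-q_L(\bar x)-q_L(\bar y)=2b_L(\bar x,\bar y)$ in $\Q/2\Z$: if the three values of $q_L$ vanish in $\Q/2\Z$, then $2b_L(\bar x,\bar y)=0$ in $\Q/2\Z$, which in turn forces $b_L(\bar x,\bar y)=0$ in $\Q/\Z$. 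Combined with the bijection above, "$\pr_L\inv(K)$ is an even overlattice" becomes equivalent to "$K$ is totally isotropic in $\Dq{L}$," which is the proposition. Once this $\Q/2\Z$-versus-$\Q/\Z$ bookkeeping is in hand, the rest is a direct unwinding of definitions.
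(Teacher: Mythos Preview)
Your argument is correct and is essentially the standard proof found in Nikulin~\cite{MR525944}; note, however, that the paper does not give its own proof of this proposition but merely cites Nikulin. One minor remark: in this paper an even overlattice is \emph{by definition} a $\Z$-submodule of $L\dual$ (see Section~\ref{subsec:overlattice}), so your opening observation that $M\subset L\dual$ is redundant here, though it would be needed under the more common convention where an overlattice is simply a lattice containing $L$ with finite index.
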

A submodule $A$ of a free $\Z$-module $M$ is said to be \emph{primitive}
if $M/A$ is torsion free.
The \emph{primitive closure} of $A$ in $M$ is the primitive submodule $(A\tensor\Q)\cap M$ of $M$.
As a corollary of Proposition~\ref{prop:isot}, we obtain the following:
\begin{corollary}\label{cor:unimodoverlattice}
Let $S$ and $T$ be even $\Z$-lattices.
Then there exists a canonical bijective correspondence between the set of even unimodular overlattices $H$ 
of the orthogonal direct sum $S\oplus T$ such that $S$ and $T$ are primitive in $H$, 
and the set of anti-isometries $(D_S, -q_S)\isom\Dq{T}$.
\end{corollary}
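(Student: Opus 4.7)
The plan is to deduce this from Proposition~\ref{prop:isot} applied to the even $\Z$-lattice $L := S \oplus T$, whose discriminant form decomposes orthogonally as $(D_L, q_L) = (D_S, q_S) \oplus \Dq{T}$. Under that proposition, an even overlattice $H$ of $L$ corresponds bijectively to a totally isotropic subgroup $K := H/L \subset D_L$, so what I need to do is translate the two extra conditions (primitivity of $S$ and $T$ in $H$, and unimodularity of $H$) into conditions on $K$, and then recognize what remains as the data of an anti-isometry $(D_S, -q_S) \isom \Dq{T}$.

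First I would check that $S$ is primitive in $H$ if and only if $K \cap D_S = 0$, where $D_S$ is identified with the subgroup $D_S \oplus 0 \subset D_L$, and symmetrically for $T$. This rests on the identity $(S \tensor \Q) \cap L\dual = S\dual$ together with the observation that $(H \cap (S\dual \oplus 0))/S$ is carried isomorphically onto $K \cap (D_S \oplus 0)$ by the projection $L\dual \to D_L$. Next I would compute the index $[H\dual : H]$: the standard identification $H\dual / L = K^{\perp}$ inside $D_L$, combined with the non-degeneracy of $q_L$, yields $[H\dual : H] = |D_L| / |K|^2$, so $H$ is unimodular exactly when $|K|^2 = |D_S| \cdot |D_T|$.

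Combining these facts, the two projections $K \to D_S$ and $K \to D_T$ are each injective (by the primitivity conditions), while the cardinality identity from unimodularity forces each of them to be a bijection. Hence $K$ must be the graph of a group isomorphism $\phi \colon D_S \isom D_T$, and in particular $|D_S| = |D_T|$ emerges as a consequence rather than a hypothesis. The isotropy condition on $K$ then reads $q_S(x) + q_T(x^\phi) \equiv 0$ in $\Q / 2\Z$ for every $x \in D_S$, which is precisely the anti-isometry condition appearing in the statement. The reverse construction (given $\phi$, take $K$ to be its graph) is immediate, and canonicity is inherited from Proposition~\ref{prop:isot}. I expect the only delicate point to be the careful verification of the primitivity equivalence; once that and the index computation are secured, the remainder is essentially bookkeeping about finite quadratic forms.
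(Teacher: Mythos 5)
Your argument is correct and follows exactly the route the paper indicates: apply Proposition~\ref{prop:isot} to $L=S\oplus T$, translate primitivity and unimodularity of $H$ into the conditions that $K=H/L$ meets $D_S$ and $D_T$ trivially and has $|K|^2=|D_L|$, and then recognize such isotropic $K$ as precisely the graphs of anti-isometries. The paper leaves these verifications implicit and simply records the correspondence; your write-up supplies the same bijection with the details (the identification $(S\tensor\Q)\cap H = H\cap(S\dual\oplus 0)$, the index computation $[H\dual:H]=|D_L|/|K|^2$, and the forced bijectivity of the two projections) carried out correctly.
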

The correspondence is given as follows.
Let $\gamma\colon (D_S, -q_S)\isom\Dq{T}$
be an anti-isometry of the discriminant forms.
Then the pull-back of the graph of $\gamma$ in $D_S\oplus D_T$
by the natural projection $S\dual \oplus T\dual \to D_S\oplus D_T$
is an even unimodular overlattice of $S\oplus T$.
\subsection{Genus of even $\Z$-lattices}\label{subsec:genus}
Let $\Dq{}$ be a non-degenerate finite quadratic form.
For each prime divisor $p$ of $d:=|D|$,
let $D_p$ denote the $p$-part
 $$
 \set{x\in D}{p^\nu x=0\;\;\textrm{for some integer}\;\; \nu\ge 0}
 $$
of $D$,
and let $q_p$ denote the restriction of $q$ to $D_p$.
Then $\Dq{p}$ is a non-degenerate $p$-adic
finite quadratic form.
We say that $\Dq{p}$ is the \emph{$p$-part} of $\Dq{}$.
If $p\ne p\sprime$,
then $D_p$ and $D_{p\sprime}$ are orthogonal with respect to the bilinear form $b$ of $\Dq{}$.
Hence we obtain a canonical orthogonal direct-sum decomposition
\begin{equation}\label{eq:pdecomp}
\Dq{}= \bigoplus_{p | d} \,\Dq{p}.
\end{equation}
\par
Suppose that $L$ is an even $\Z$-lattice.
Then the even $\Zp$-lattice $L\tensor \Zp$
is not unimodular if and only if $p$ divides 
the order $|D_L|$ of the discriminant group,
and the $p$-part of 
 the discriminant form $\Dq{L}$ is isomorphic to
$ \Dq{L\tensor \Zp}$.
Moreover,  the discriminant $\disc(L\tensor\Zp)$
is equal to $\disc(L) \bmod (\Zp\sptimes)^2$.
If $\sign(L)=(s_+, s_-)$, we have $\disc(L)=(-1)\sp{s_-} |D_L|$.
Hence, 
by the results we have stated so far, we obtain the following:
\begin{proposition}\label{prop:genus}
Let $L$ and $L\sprime$ be even $\Z$-lattices.
Then the following conditions are equivalent:
\begin{itemize}
\item[(i)] $\sign(L)=\sign(L\sprime)$ and $\Dq{L}\cong\Dq{L\sprime}$.
\item[(ii)] $L\tensor \R\cong L\sprime\tensor \R$, and $L\tensor \Zp\cong L\sprime\tensor \Zp$
for all $p$.
\end{itemize}
\end{proposition}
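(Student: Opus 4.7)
The plan is to derive this directly from the classification results assembled in Subsections~\ref{subsec:evenZplattices}~and~\ref{subsec:genus}, with the nontrivial content entirely packaged into Proposition~\ref{prop:isomclassZp} and the canonical decomposition~\eqref{eq:pdecomp}. I would separate the two implications and show that each one reduces to bookkeeping of three invariants (rank, discriminant modulo squares, discriminant form) prime by prime.

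For (ii)$\Rightarrow$(i), the signature is a complete invariant of a real quadratic space, so $L\tensor \R \cong L\sprime \tensor \R$ immediately gives $\sign(L)=\sign(L\sprime)$. For the discriminant form, I would invoke the sentence preceding~\eqref{eq:pdecomp}: the $p$-part of $\Dq{L}$ is isomorphic to $\Dq{L\tensor \Zp}$, and it is nontrivial exactly when $p\mid |D_L|$. Thus $L\tensor \Zp\cong L\sprime\tensor \Zp$ for every $p$ yields $\Dq{p}\cong q_{L\sprime, p}$ for every $p$, and the orthogonal decomposition~\eqref{eq:pdecomp} assembles these into $\Dq{L}\cong \Dq{L\sprime}$.

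For the less obvious direction (i)$\Rightarrow$(ii), I would first use that $\sign(L)=\sign(L\sprime)$ determines $L\tensor \R$ up to isometry, so the real condition is immediate. I then fix a prime $p$ and verify the three invariants appearing in Proposition~\ref{prop:isomclassZp}. The rank is $s_++s_-$, which is common to $L$ and $L\sprime$. The discriminant forms satisfy $\Dq{L\tensor \Zp}\cong q_{L, p}\cong q_{L\sprime, p}\cong \Dq{L\sprime\tensor \Zp}$, using again that the $p$-part of $\Dq{L}$ is $\Dq{L\tensor \Zp}$. For the discriminant in $(\Zp\setminus\{0\})/(\Zp\sptimes)^2$, I would use the identity $\disc(L)=(-1)^{s_-}|D_L|$ stated just before Proposition~\ref{prop:genus}, together with the fact that $|D_L|=|D_{L\sprime}|$ follows from $\Dq{L}\cong \Dq{L\sprime}$; hence $\disc(L)=\disc(L\sprime)$ as integers and a fortiori modulo $(\Zp\sptimes)^2$. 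Proposition~\ref{prop:isomclassZp} then forces $L\tensor \Zp\cong L\sprime\tensor \Zp$.

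Since all the real work is contained in Proposition~\ref{prop:isomclassZp} (which itself rests on the normal form theorems of Subsection~\ref{subsec:evenZplattices}), the only genuine care needed here is to keep track of the relation $\disc(L)=(-1)^{s_-}|D_L|$ so that the sign of the discriminant is controlled by the signature rather than being an independent piece of data; this is the one place where the argument would break if we forgot that the signature is given. Beyond that, the proof is essentially a prime-by-prime application of Proposition~\ref{prop:isomclassZp} glued together by the decomposition~\eqref{eq:pdecomp}, so I do not anticipate any serious obstacle.
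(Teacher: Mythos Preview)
Your proposal is correct and is exactly the argument the paper has in mind: the proposition is stated immediately after the sentence ``Hence, by the results we have stated so far, we obtain the following,'' and the results referred to are precisely Proposition~\ref{prop:isomclassZp}, the identification of the $p$-part of $\Dq{L}$ with $\Dq{L\otimes\Z_p}$, and the formula $\disc(L)=(-1)^{s_-}|D_L|$. Your write-up simply makes explicit the two implications that the paper leaves to the reader.
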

\begin{definition}
We say that even $\Z$-lattices $L$ and $L\sprime$ are \emph{in the same genus}
if the two conditions in Proposition~\ref{prop:genus} are satisfied.
\end{definition}
\begin{definition}
Let $(s_{+}, s_{-})$  be a pair of non-negative integers such that $r:=s_+ + s_->0$,
and let $\Dq{}$ be a non-degenerate finite quadratic form.
The \emph{genus determined by $(s_{+}, s_{-})$ and $\Dq{}$}
is the set of isomorphism classes of even $\Z$-lattices 
$L$ of rank $r$ such that $\sign(L)=(s_{+}, s_{-})$ and $\Dq{L}\cong \Dq{}$.
\end{definition}
We have the following criterion, due to Nikulin~\cite{MR525944}, for the genus determined by $(s_{+}, s_{-})$ and $\Dq{}$
to be non-empty. (See also Theorem 5.2 in~\cite[Chapter V]{MMB}.)
The \emph{Brown invariant $\Br\Dq{}$} of a non-degenerate finite quadratic form $\Dq{}$ is 
defined to be the element of $\Z/8\Z$ that satisfies
$$
\exp\left(\frac{2\pi \sqrt{-1}}{8}\, \Br\Dq{}\right)=\frac{1}{\sqrt{|D|}}\sum_{x\in D} \exp(\sqrt{-1}\pi \,q(x)).
$$
(See~\cite[Chapter III]{MMB} for the existence of the Brown invariant.)
The Brown invariant is additive 
under the operation of orthogonal direct-sum of non-degenerate finite quadratic forms,
and the values of this invariant 
for the indecomposable non-degenerate $p$-adic  finite quadratic forms 
are given in Table~\ref{table:indecomppfqf}.
Hence, using the decomposition~\eqref{eq:pdecomp}
and Proposition~\ref{prop:normalformpfqf},
we can easily calculate $\Br\Dq{}$ for any $\Dq{}$.
\begin{theorem}\label{thm:genusexistence}
Let $s_{+}$ and $ s_{-}$  be non-negative integers such that $s_+ + s_->0$,
and let $\Dq{}$ be a non-degenerate finite quadratic form.
We put $r:=s_+ + s_- $ and $d:=(-1)^{s_-} |D|$.
Then the genus determined by $(s_{+}, s_{-})$ and $\Dq{}$ is non-empty if and only if the following hold:
\begin{enumerate}[{\rm (i)}]
\item $r\ge \leng(D)$,
\item $\Br\Dq{}\equiv s_+ -s_- \bmod 8$, and
\item for each prime divisor $p$ of $d$,
there exists an even $\Zp$-lattice of rank $r$, discriminant $d \bmod (\Zp\sptimes)^2$,
and with the discriminant form isomorphic to the $p$-part $\Dq{p}$ of $\Dq{}$.
\end{enumerate}
\end{theorem}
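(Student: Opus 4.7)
The plan is to prove necessity and sufficiency separately, using Proposition~\ref{prop:genus} to translate global existence into local data at each completion.

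For necessity, suppose an even $\Z$-lattice $L$ realises the genus. Condition~(i) is immediate since $D_L=L\dual/L$ is generated by the images of a dual basis, giving $\leng(D_L)\le r$. Condition~(iii) follows directly from Proposition~\ref{prop:genus}, with $L_p:=L\tensor\Zp$ providing the required local lattice at each $p\mid d$. Condition~(ii) is Milgram's formula: for an even $\Z$-lattice of signature $(s_+,s_-)$, the Gauss sum defining $\Br\Dq{L}$ evaluates to $\exp(2\pi\sqrt{-1}(s_+-s_-)/8)$. This can be verified additively over orthogonal decompositions by checking it on the indecomposable $\Zp$-pieces of Proposition~\ref{prop:normalformZplat} and comparing with the tabulated Brown invariants in Table~\ref{table:indecomppfqf}.

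For sufficiency, I would build local lattices and glue them globally. At each $p\mid d$, condition~(iii) supplies an even $\Zp$-lattice $L_p$ of rank $r$, discriminant $d\bmod(\Zp\sptimes)^2$, and $p$-part of the discriminant form isomorphic to $\Dq{p}$. At each prime $p\nmid d$, I need an even unimodular $\Zp$-lattice of rank $r$ and the prescribed unit discriminant class. For odd $p$ this exists for every rank and unit class by Proposition~\ref{prop:normalformZplat}. For $p=2$ with $2\nmid d$, such a lattice exists only in even rank; here condition~(ii) forces the right parity, since the indecomposable Brown invariants at odd primes are all even (read off Table~\ref{table:indecomppfqf}), so $\Br\Dq{}$ is even, hence $s_+-s_-$ and therefore $r$ is even.

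With a compatible family $\{L_p\}_p$ in hand, I would invoke Hasse--Minkowski to produce a $\Q$-quadratic space $V$ of signature $(s_+,s_-)$ realising these local completions. Hilbert reciprocity imposes a single global parity constraint on the local Hasse invariants, and Milgram's formula identifies this constraint with condition~(ii). Given $V$, I fix identifications $V\tensor\Qp\isom L_p\tensor\Qp$ and set $L:=V\cap\bigcap_p L_p$; this is an even $\Z$-lattice with $L\tensor\Zp\cong L_p$ for all $p$ and $L\tensor\R$ of signature $(s_+,s_-)$, so Proposition~\ref{prop:genus} places it in the genus determined by $(s_+,s_-)$ and $\Dq{}$.

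The main obstacle is the gluing step, where Milgram's formula is used twice: first to pin down the necessity of~(ii) from the signature of $L$, and second to translate (ii) into the Hilbert reciprocity identity that permits Hasse--Minkowski to apply. The subtlest bookkeeping is the parity issue at $p=2$ when $2\nmid d$, where the local existence problem couples non-trivially with the global Brown invariant, and where care is needed to check that the unit discriminant class of the constructed unimodular $\Z_2$-lattice matches $d\bmod(\Z_2\sptimes)^2$ among the normal forms of Proposition~\ref{prop:normalformZplat}.
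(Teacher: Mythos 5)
The paper itself gives no proof of this theorem: it is attributed to Nikulin and to Theorem~5.2 of Chapter~V of Miranda--Morrison, so there is nothing in-house to compare against. Your outline does follow the standard local--global route, and the necessity of (i) and (iii) is fine. But there are two places in the sufficiency direction where what you have written is a plan rather than an argument, and a third place in the necessity direction where the stated justification does not actually work.

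On sufficiency: you correctly observe that when $2\nmid d$ the local lattice at $2$ must be an even unimodular $\Z_2$-lattice, and that this forces $r$ even; but you then only \emph{flag} the requirement that its discriminant class equal $d\bmod(\Z_2\sptimes)^2$ and say ``care is needed,'' without showing it can be met. This is a genuine gap, because the discriminants of even unimodular $\Z_2$-lattices of rank $r$ realise only two of the four unit classes mod $8$ (namely $\{1,5\}$ if $r\equiv 0\bmod 4$ and $\{3,7\}$ if $r\equiv 2\bmod 4$), so the constraint is non-vacuous; it does in fact follow from condition (ii) by a $\bmod\,4$ computation relating $\Br\Dq{}$, $|D|$ and $r$ for odd-order $D$, but you never make that argument. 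Likewise, the appeal to Hasse--Minkowski to produce $V$ from the local data requires verifying Hilbert reciprocity for the family of local Hasse invariants, and the assertion that ``Milgram's formula identifies this constraint with condition (ii)'' is exactly the content that needs proving; as written it is a restatement of the goal, not a step towards it. Finally, on necessity, the claim that Milgram's formula ``can be verified additively over orthogonal decompositions by checking it on the indecomposable $\Zp$-pieces of Proposition~\ref{prop:normalformZplat}'' is not a proof of Milgram's formula: a $\Z$-lattice does not decompose as an orthogonal direct sum of those $\Zp$-pieces, and the signature $s_+-s_-$ is a global archimedean invariant invisible to the $p$-adic data. Milgram's formula is a genuine global identity (proved, e.g., by reduction to rank-one lattices plus Witt-type cancellation, or via theta functions/the Weil representation), and should simply be cited as such rather than ``verified'' locally.
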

\begin{remark}
%
Another formulation of the criterion by means of $p$-excess is given 
by Conway and Sloan~\cite[Chapter 15]{CSB}.
In our previous papers~\cite{MR1813537, MR2369942},
we used this $p$-excess version.
\end{remark}
By the weak Hasse principle,
we obtain the following proposition~(Theorem 1.1 in~\cite[Chapter VIII]{MMB}).
\begin{proposition}\label{prop:corweakHasse}
If even $\Z$-lattices $L$ and $L\sprime$ are in the same genus,
then the $\Q$-lattices $L\tensor\Q$ and $L\sprime\tensor\Q$ are isomorphic.
\end{proposition}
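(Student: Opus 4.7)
The plan is to deduce this directly from the weak Hasse principle for non-degenerate quadratic forms over $\Q$, using the characterization of the genus given in Proposition~\ref{prop:genus}(ii).

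First I would invoke the equivalent condition (ii) in Proposition~\ref{prop:genus}: being in the same genus gives isomorphisms of $\R$-lattices $L\tensor\R\cong L\sprime\tensor\R$ and of $\Zp$-lattices $L\tensor\Zp\cong L\sprime\tensor\Zp$ for every prime $p$. Tensoring the latter with $\Qp$ over $\Zp$ yields isomorphisms of $\Qp$-lattices
$$
L\tensor\Qp \;\cong\; L\sprime\tensor\Qp \qquad \textrm{for every prime } p.
$$
Thus the $\Q$-lattices $L\tensor\Q$ and $L\sprime\tensor\Q$ become isomorphic after base change to every completion of $\Q$, both real and $p$-adic.

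Next I would apply the classical Hasse--Minkowski theorem (the weak Hasse principle): two non-degenerate quadratic forms over $\Q$ are $\Q$-isomorphic if and only if they are isomorphic over $\R$ and over $\Qp$ for every prime $p$. Since $\intf{\phantom{a},\phantom{a}}$ is non-degenerate on $L$ and $L\sprime$ by assumption, the induced $\Q$-valued bilinear forms on $L\tensor\Q$ and $L\sprime\tensor\Q$ are non-degenerate, so Hasse--Minkowski applies and gives the desired isomorphism $L\tensor\Q\cong L\sprime\tensor\Q$.

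There is essentially no obstacle here: the content of the proposition is packaged into the Hasse--Minkowski theorem, and the only bookkeeping is to pass from $\Zp$-isomorphism to $\Qp$-isomorphism, which is immediate by extension of scalars. In a written proof I would simply cite the weak Hasse principle (as stated, e.g., in~\cite[Chapter VIII, Theorem 1.1]{MMB}) and record the one-line base-change argument, without further computation.
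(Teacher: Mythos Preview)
Your proposal is correct and matches the paper's approach exactly: the paper does not give a proof but simply records the proposition as a consequence of the weak Hasse principle, citing Theorem~1.1 in~\cite[Chapter VIII]{MMB}. Your argument spells out the one-line reduction via Proposition~\ref{prop:genus}(ii) and base change that the paper leaves implicit.
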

\section{Connected components of moduli}\label{sec:conn}
In this section,
we fix an $ADE$-configuration $\Phi$, a finite abelian group $A$,
and a subgroup $G$ of the automorphism group $\Aut(\Phi)$ of $\Phi$.
\subsection{Definition of connected components}\label{subsec:defconn}
Let $(X, f, s)$ be an elliptic $K3$ surface.
We consider the second cohomology group $H^2(X, \Z)$ as a $\Z$-lattice by the cup-product.
It is well-known that $H^2(X, \Z)$ is a \emph{$K3$-lattice}; 
that is, $H^2(X, \Z)$ is an even unimodular $\Z$-lattice of signature $(3, 19)$,
which is unique up to isomorphism.
Recall that $\Phi_f\subset H^2(X, \Z)$ is the set of 
classes of smooth rational curves on $X$ 
that are contracted by $f$ and are disjoint from $s$,
and that $A_f$ is the torsion part of the Mordell-Weil group. 
By the classical theory of elliptic surfaces~(see~\cite{MR0184257}),
we know that $\Phi_f$ is an $ADE$-configuration.
Recall that  $\Lgen{\Phi_f}$ denotes the root sublattice of $H^2(X, \Z)$ generated by $\Phi_f$.
\par
Suppose that $(X, f, s)$ is of type $(\Phi, A)$; 
that is, $\Phi_f\cong \Phi$ and $A_f\cong A$.
A \emph{marking} of $(X, f, s)$  is an isomorphism
$\phi\colon \Phi\isom \Phi_f$ of $ADE$-configurations.
If $\phi$ is a marking of $(X, f, s)$,
we denote by $(X, f, s, \phi)$ the marked elliptic $K3$ surface.
We say that two markings $\phi$ and $\phi\sprime$ 
of $(X, f, s)$ are \emph{$G$-isomorphic}
if there exists an element $g\in G$ such that 
$\phi=g\cdot \phi\sprime$ holds.
More generally, 
we say that two marked elliptic $K3$ surfaces 
$(X, f, s, \phi)$ and $(X\sprime, f\sprime, s\sprime, \phi\sprime)$
of type $(\Phi, A)$ are \emph{$G$-isomorphic}
if there exist an isomorphism $\psi\colon X\sprime \isom X$ of $K3$ surfaces and 
an element $g$ of $G$ 
that satisfy the following:
\begin{itemize}
\item 
We have $f\circ \psi =f\sprime$ and $\psi\circ s\sprime=s$, 
so that $\psi$ induces an isomorphism of elliptic $K3$ surfaces $ (X\sprime, f\sprime, s\sprime)\isom (X, f, s)$.
Hence the pull-back by $\psi$ induces an isomorphism
$$
\Phi_{\psi}\colon \Phi_f \isom \Phi_{f\sprime}
$$
of $ADE$-configurations.
\item
The diagram
$$
\begin{array}{ccc}
\Phi& \xrightarrow{g} & \Phi \\
\mapdownleft{\llap{\scriptsize $\phi$}} && \mapdownright{\phi\sprime} \\
\Phi_f & \maprightsb{\Phi_{\psi}} & \Phi_{f \sprime}
\end{array}
$$
commutes.
\end{itemize}
We consider the moduli space that parameterizes 
the $G$-isomorphism classes of marked elliptic $K3$ surfaces of type $(\Phi, A)$,
and define the set $\CCCC(\Phi, A, G)$ of connected components of this moduli space. 
\begin{remark} 
Theorem~\ref{thm:main} and Corollaries~\ref{cor:twelve},~\ref{cor:trans}
 stated in Introduction are for the case where $G=\Aut(\Phi)$.
\end{remark}
A \emph{connected family $(\XXX, F, S)/B$ of elliptic $K3$ surfaces of type $(\Phi, A)$}
is a commutative diagram
$$
\begin{array}{ccc}
\XXX & \xrightarrow{F} & \P^1_B \\
 \rlap{{\scriptsize $\pi$} $\searrow$} \hskip 5pt & \mystruth{14pt} &\hskip 14pt\llap{$\swarrow$ \hbox {\scriptsize $ \pi_P$}} \\
 &B& 
\end{array}
$$
with a section $S\colon \P^1_B\to \XXX$ of $F$ such that the following hold:
\begin{itemize}
\item $B$ is a connected analytic variety, $\pi\colon \XXX\to B$ is a family of $K3$ surfaces, $\pi_P\colon \P^1_B\to B$ is a $\P^1$-fibration, and 
\item for any point $t\in B$, the pullback $(X_t, f_t, s_t)$ of $(\XXX, F, S)$ by $\{t\}\inj B$
is an elliptic $K3$ surface of type $(\Phi, A)$.
\end{itemize}
Let $(\XXX, F, S)/B$ be a connected family as above.
For a point $t\in B$, 
we denote by 
$\Phi_t$ the $ADE$-configuration $\Phi_{f_t}$ of the elliptic $K3$ surface $(X_t, f_t, s_t)$.
The family $\shortset{\Phi_t}{t\in B}$ defines
a locally constant system 
$$
\Phi_B\to B
$$
of $ADE$-configurations.
A \emph{marking} of a connected family $(\XXX, F, S)/B$ 
is a choice of a base point $o\in B$ and a marking
$\phi_o\colon \Phi\isom \Phi_o$ of $(X_o, f_o, s_o)$.
We say that a marked connected family $( (\XXX, F, S)/B, \phi_o)$ 
is \emph{$G$-connected} if the image of the monodromy representation
$$
m_B \colon \pione(B, o) \to \Aut(\Phi)
$$
obtained from the locally constant system $\Phi_B\to B$ and the marking $\phi_o\colon \Phi\isom \Phi_o$
is contained in $G$.
%
%
Suppose that $( (\XXX, F, S)/B, \phi_o)$ is $G$-connected,
and let $t$ be a point of $B$.
Since $B$ is connected,
there exists a path $\gamma\colon [0,1]\to B$
from the base point $o$ to $t$.
The composite of $\phi_o\colon \Phi\isom \Phi_o$ and 
the transportation $\Phi_o\isom \Phi_t$ in the locally constant system $\Phi_B\to B$ along $\gamma$
gives rise to a marking
$$
\phi_t\colon \Phi\isom \Phi_t
$$
of $(X_t, f_t, s_t)$.
This marking depends on the choice of the path $\gamma$,
but the $G$-isomorphism class of $\phi_t$ 
is independent of the choice of $\gamma$.
Therefore a $G$-connected family
parametrizes a family of $G$-isomorphism classes of marked elliptic $K3$ surfaces.
\par
We say that two marked elliptic $K3$ surfaces $(X, f, s, \phi)$ and $(X\sprime, f\sprime, s\sprime, \phi\sprime)$
of type $(\Phi, A)$
are \emph{$G$-connected}
if there exists a marked $G$-connected family $( (\XXX, F, S)/B, \phi_o)$ of elliptic $K3$ surfaces of type $(\Phi, A)$
with two fibers $G$-isomorphic to $(X, f, s, \phi)$ and $(X\sprime, f\sprime, s\sprime, \phi\sprime)$,
respectively.
This relation of $G$-connectedness is an equivalence relation.
The transitivity is proved as follows.
Suppose that $(X_1, f_1, s_1, \phi_1)$ and $(X_2, f_2, s_2, \phi_2)$
are $G$-isomorphic to the fibers of a marked $G$-connected family $( (\XXX, F, S)/B, \phi_o)$
over $t_1\in B$ and $t_2\in B$, respectively,
and that $(X_2, f_2, s_2, \phi_2)$ and $(X_3, f_3, s_3, \phi_3)$ 
are $G$-isomorphic to the fibers of $( (\XXX\sprime, F\sprime, S\sprime)/B\sprime, \phi_{o\sprime})$
over $t_2\sprime\in B\sprime$ and $t_3\sprime\in B\sprime$, respectively.
Let $B\spprime$ be the connected analytic space obtained by
gluing $B$ and $B\sprime$ at $t_2\in B$ and $t_2\sprime\in B\sprime$,
and let $(\XXX\spprime, F\spprime, S\spprime)/B\spprime$ be the family
obtained by gluing $(\XXX, F, S)/B$ and $(\XXX\sprime, F\sprime, S\sprime)/B\sprime$ along the fibers 
over $t_2\in B$ and $t_2\sprime\in B\sprime$,
both of which are isomorphic to $(X_2, f_2, s_2)$.
Then $((\XXX\spprime, F\spprime, S\spprime)/B\spprime, \phi_o)$ is 
a marked $G$-connected family, and hence $(X_1, f_1, s_1, \phi_1)$
and $(X_3, f_3, s_3, \phi_3)$ are $G$-connected.
\par
We define a \emph{$G$-connected component of the moduli of elliptic $K3$ surfaces of type $(\Phi, A)$}
to be an equivalence class of the relation of $G$-connectedness
of marked elliptic $K3$ surfaces of type $(\Phi, A)$.
We denote by $\CCCC(\Phi, A, G)$ the set of $G$-connected components of this moduli.
\subsection{Lattice invariant of connected components}\label{subsec:invconn}
In this section,
we define a set $\QQQ(\Phi, A)/\mathordsim_G$
in purely lattice-theoretic terms, and establish a bijection 
$$
\bar\zeta\colon \CCCC(\Phi, A, G)\isom \QQQ(\Phi, A)/\mathordsim_G.
$$
We denote by $\Lgen{\Phi}$ the root lattice with a fundamental root system $\Phi$,
and put
$$
r_{\Phi}:=\rank{\Lgen{\Phi}}.
$$
\par
Let $(X, f, s)$ be an elliptic $K3$ surface of type $(\Phi, A)$.
Recall that  $\barLgen{\Phi_f}$ denotes the primitive closure of $\Lgen{\Phi_f}$ in $H^2(X, \Z)$,
that   $U_f$ denotes the sublattice of $H^2(X, \Z)$ generated by the class of a fiber of $f$
and the class of the section $s$.
Then $U_f$ is an even unimodular hyperbolic $\Z$-lattice of rank $2$,
and is orthogonal to $\Lgen{\Phi_f}$ in $H^2(X, \Z)$.
Hence $U_f\oplus \barLgen{\Phi_f}$ is a primitive sublattice of $H^2(X, \Z)$.
\begin{proposition}\label{prop:nnr}
We have $\barLgen{\Phi_f}/\Lgen{\Phi_f}\cong A$
and $\Roots(\barLgen{\Phi_f})=\Roots (\Lgen{\Phi_f})$.
\end{proposition}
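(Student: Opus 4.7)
The plan is to establish the two claims in sequence. For the first claim $\barLgen{\Phi_f}/\Lgen{\Phi_f}\cong A$, I would set $T_f := U_f \oplus \Lgen{\Phi_f}$ (the trivial lattice of $(X,f,s)$) and invoke the Shioda-Tate formula, which provides a canonical isomorphism between the Mordell-Weil group and $\NS(X)/T_f$ under which the torsion subgroup $A_f$ corresponds to $\overline{T_f}/T_f$, where $\overline{T_f}$ is the primitive closure of $T_f$ in $\NS(X)$. A direct computation in the basis $([F],[s])$ shows that $U_f$ is even unimodular of signature $(1,1)$, so it is primitively embedded in $\NS(X)$ and the primitive closure respects the orthogonal splitting; hence $\overline{T_f} = U_f \oplus \barLgen{\Phi_f}$, which yields $\barLgen{\Phi_f}/\Lgen{\Phi_f} \cong A_f \cong A$.

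For the second claim $\Roots(\barLgen{\Phi_f}) = \Roots(\Lgen{\Phi_f})$, the inclusion $\supseteq$ is tautological; for the reverse, let $r \in \barLgen{\Phi_f}$ with $\langle r,r\rangle = -2$. Riemann-Roch and Serre duality on the $K3$ surface give $h^0(\mathcal{O}_X(r)) + h^0(\mathcal{O}_X(-r)) \geq \chi(\mathcal{O}_X(r)) = 1$, so after replacing $r$ by $-r$ if necessary I may take $r$ effective and write $r = \sum n_i C_i$ with $n_i > 0$ and distinct irreducible curves $C_i$. The orthogonality $r \perp U_f$ yields $r \cdot F = 0$, which forces every $C_i$ to lie in a fiber, and $r \cdot s = 0$, which combined with $C_i \neq s$ (since $s$ is a section, so not contained in any fiber) and $C_i \cdot s \geq 0$ forces every $C_i$ to be disjoint from $s$. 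Since the irreducible components of a reducible fiber of an elliptic $K3$ are smooth rational by Kodaira's classification, each $[C_i]$ lies in $\Phi_f$, and therefore $r \in \Lgen{\Phi_f}$.

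The main obstacle is the lattice-theoretic claim that the primitive closure of $T_f$ in $\NS(X)$ splits as $U_f \oplus \barLgen{\Phi_f}$; this uses the unimodularity of $U_f$ in an essential way (an element of $(T_f\otimes\Q)\cap\NS(X)$ has its $U_f$-component in $U_f^\vee=U_f$, so the two summands do not mix under taking primitive closure). Once this splitting is established and Shioda-Tate is quoted, the first claim is immediate, and the second is a short geometric argument via Riemann-Roch.
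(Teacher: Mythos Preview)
Your proof is correct and follows essentially the same route as the paper. For the first claim the paper simply cites Shioda's Mordell--Weil lattice paper for the isomorphism $\barLgen{\Phi_f}/\Lgen{\Phi_f}\cong A_f$; you unpack this via Shioda--Tate and the unimodularity of $U_f$, which is exactly the content of that citation. For the second claim the paper argues by contradiction in one sentence (``there would be a smooth rational curve on $X$ whose class is orthogonal to $U_f$ but does not belong to $\Phi_f$''), implicitly invoking the Riemann--Roch step that you make explicit; your direct argument that an effective $(-2)$-class orthogonal to $U_f$ decomposes into classes from $\Phi_f$ is the rigorous version of the same idea.

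One notational caution: in the paper the symbol $T_f$ is reserved for the orthogonal complement of $U_f\oplus\barLgen{\Phi_f}$ in $H^2(X,\Z)$ (the ``transcendental lattice'' of the fibration), not for the trivial lattice $U_f\oplus\Lgen{\Phi_f}$. If you integrate your argument into the paper you should rename your trivial lattice to avoid the clash.
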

\begin{proof}
The isomorphism $\barLgen{\Phi_f}/\Lgen{\Phi_f}\cong A_f\cong A$ 
is classically known.
See~\cite{MR1081832}, for example.
Note that $\Roots(\barLgen{\Phi_f})\supset \Roots (\Lgen{\Phi_f})$.
Suppose that $\Roots(\barLgen{\Phi_f})$ were strictly larger than $\Roots (\Lgen{\Phi_f})$.
Then there would be a smooth rational curve on $X$ whose class 
is orthogonal to $U_f$ but does not belong to $\Phi_f$,
which is a contradiction.
\end{proof}
Since $U_f$ is unimodular,
we have a canonical isomorphism 
$$
\Dq{U_f\oplus \barLgen{\Phi_f}}\isom\Dq{ \barLgen{\Phi_f}}.
$$
Recall that $T_f$ denotes the orthogonal complement of $U_f\oplus \barLgen{\Phi_f}$ in $H^2(X, \Z)$.
Then $T_f$ is an even $\Z$-lattice of signature $(2, 18-r_{\Phi})$.
Moreover, Corollary~\ref{cor:unimodoverlattice} implies that
we have a unique anti-isomorphism
$$
\alpha_f\colon (D_{\barLgen{\Phi_f}}, -q_{\barLgen{\Phi_f}})\isom \Dq{T_f}
$$
of discriminant forms 
that gives rise to the even unimodular overlattice $H^2(X, \Z)$ of $(U_f\oplus \barLgen{\Phi_f})\oplus T_f$.
Hence $T_f$ belongs to the genus determined by the signature $(2, 18-r_{\Phi})$
and the finite quadratic form  $(D_{\barLgen{\Phi_f}}, -q_{\barLgen{\Phi_f}})$.
Let $\omega_X\in H^2(X, \C)$ 
denote the class of a nowhere vanishing holomorphic $2$-form
on $X$,
which is unique up to a non-zero multiplicative constant.
We have $\omega_X\in T_f\tensor\C$, $\intf{\omega_X, \omega_X}=0$,
and $\intf{\omega_X, \bar{\omega}_X}>0$.
Let $H^{1, 1} (X, \R)\sperp $ denote the orthogonal complement of $H^{1, 1} (X, \R)$ in $H^2(X, \R)$.
Then  $H^{1, 1} (X, \R)\sperp $ is  a positive definite $2$-dimensional $\R$-lattice.
The two real vectors $\Real \omega_X$ and $\Imag \omega_X$ in this order
form an oriented orthogonal basis 
of the real subspace $H^{1, 1} (X, \R)\sperp $ of $ T_f\tensor \R$.
Thus the Hodge structure of $H^2(X)$ canonically defines a positive sign structure
$\theta_f$ of $T_f$.
\par
These geometric objects $M(\Phi_f)$, $T_f$, $\alpha_f$, and $\theta_f$
motivate the following lattice-theoretic definitions.
\begin{definition}
For an even overlattice $M$ of $L(\Phi)$,
let $\GGG(M)$ denote the genus of even $\Z$-lattices 
determined by the signature
$(2, 18-r_{\Phi})$ and the discriminant form $(D_{M}, -q_M)$.
Let $\EEE(\Phi, A)$ denote the set of even overlattices $M$ of $\Lgen{\Phi}$
such that
\begin{itemize}
\item $M/\Lgen{\Phi}\cong A$ and $\Roots(M)= \Roots (\Lgen{\Phi})$, and
\item $\GGG(M)$ is non-empty.
\end{itemize}
We define $\QQQ(\Phi, A)$ to be the set of quartets $(M, T, \alpha, \theta)$
of the following objects;
$M$ is an element of $\EEE(\Phi, A)$,
$T$ is an even $\Z$-lattice belonging to the genus $\GGG(M)$,
$\alpha$ is an isomorphism $(D_{M}, -q_M)\isom \Dq{T}$, 
and $\theta$ is a positive sign structure of $T$.
\end{definition}
\par
We define an equivalence relation $\mathordsim_G$ on the set $\QQQ(\Phi, A)$.
Since we have a natural homomorphism $\Aut(\Phi)\to \OG(\Lgen{\Phi})$,
the subgroup $G$ of $\Aut(\Phi)$ acts on the set $\EEE(\Phi, A)$.
Note that this action is from the right.
If $g\in G$ maps $M\in \EEE(\Phi, A)$ to $M\sprime\in \EEE(\Phi, A)$,
then $g$ induces an isometry $g|{M}\colon M\isom M\sprime$,
and hence an isomorphism $\funcq{g|{M}}\colon \Dq{M}\isom \Dq{M\sprime}$.
\begin{definition}
Let $(M, T, \alpha, \theta)$ and $(M\sprime, T\sprime, \alpha\sprime, \theta\sprime)$ be elements
of $\QQQ(\Phi, A)$.
We put $(M, T, \alpha, \theta)\sim_G (M\sprime, T\sprime, \alpha\sprime, \theta\sprime)$ 
if  there exist an automorphism $g\in G$
and an isometry $\psi\colon T\isom T\sprime$
with the following properties.
\begin{itemize}
\item $g$ maps $M$ to $M\sprime$,
\item $\psi$ maps $\theta$ to $\theta\sprime$, and 
\item the following diagram is commutative:
$$
\begin{array}{cc} 
\begin{array}{ccc}
\Dq{M} & \xrightarrow{\funcq{g|{M}}} & \Dq{M\sprime} \\
\mapdownleft{\alpha} & & \mapdownright{\alpha\sprime} \\
\Dq{T} &\maprightsb{\funcq{\psi} }& \Dq{T\sprime}.
\end{array} 
\end{array}
$$
\end{itemize}
\end{definition}
\par
\medskip %
Next we define a map
$\bar\zeta$ from $ \CCCC(\Phi, A, G)$ to $\QQQ(\Phi, A)/\mathordsim_G$.
Let $(X, f, s, \phi)$ be a marked elliptic $K3$ surface of type $(\Phi, A)$.
The marking $\phi\colon \Phi\isom \Phi_f$ induces an isometry 
$\phi_L\colon\Lgen{\Phi}\isom \Lgen{\Phi_f}$.
By Proposition~\ref{prop:nnr} and the existence of $T_f$, 
there exists a unique element $M_{f, \phi}$ of $ \EEE(\Phi, A)$
such that the isometry $\phi_{L}$
induces an isometry $\phi_M\colon M_{f, \phi}\isom M(\Phi_f)$.
The composite of the isomorphism $\Dq{M_{f, \phi}}\isom \Dq{M(\Phi_f)}$ 
induced by $\phi_M$ 
and the isomorphism $\alpha_f$ yields an isomorphism
$$
\alpha_{f, \phi}\colon (D_{M_{f, \phi}}, -q_{M_{f, \phi}})\isom \Dq{T_f}.
$$
Thus we obtain a quartet 
$$
\zeta (X, f, s, \phi):=(\,M_{f, \phi}, \; T_f, \; \alpha_{f, \phi}, \; \theta_f\,) \in \QQQ(\Phi, A).
$$
Suppose that 
marked elliptic $K3$ surfaces $(X, f, s, \phi)$ and $(X\sprime, f\sprime, s\sprime, \phi\sprime)$
are $G$-isomorphic.
Then  
we obviously have $\zeta(X, f, s, \phi) \sim_G\zeta(X\sprime, f\sprime, s\sprime, \phi\sprime)$
by definitions.
Let $((\XXX, F, S)/B, \phi_o)$
be a marked $G$-connected family of elliptic $K3$ surfaces of type $(\Phi, A)$.
For a point $t\in B$, 
let $(X_t, f_t, s_t)$ denote the fiber of $(\XXX, F, S)/B$ over $t$, 
and 
let  $\Phi_t$, $U_t$,  $\Lgen{\Phi_t}$, 
$\barLgen{\Phi_t}$, $T_t$, $\alpha_t$, and $\theta_t$ be the geometric objects
associated with $(X_t, f_t, s_t)$ defined above.
Let $\phi_t\colon \Phi\isom \Phi_t$ be the marking of $(X_t, f_t, s_t)$
induced by a path $\gamma\colon [0,1]\to B$ connecting $o$ and $t$.
The transportation along $\gamma$
induces an isometry $H^2(X_o, \Z)\isom H^2(X_t, \Z)$,
and this isometry induces isometries of sublattices
 $U_o\isom U_t$, $\Lgen{\Phi_o}\isom \Lgen{\Phi_t}$, 
$\barLgen{\Phi_o}\isom \barLgen{\Phi_t}$,
and $T_o\isom T_t$.
Hence the anti-isometries $\alpha_o$ and $\alpha_t$
are compatible with the isomorphisms $\Dq{M_o}\isom \Dq{M_t}$
and $\Dq{T_o}\isom \Dq{T_t}$ obtained by
the transportation along $\gamma$.
Since $\theta_t$ is defined by the Hodge structure of $X_t$,
the analytic structure of $F\colon\XXX\to B$ implies  that 
the isometry $T_o\isom T_t$ along $\gamma$ maps $\theta_o$ to $\theta_t$.
Hence we have $\zeta(X_o, f_o, s_o, \phi_o)\sim_G \zeta(X_t, f_t, s_t, \phi_t)$.
In other words, the map $\zeta$ induces a map 
$$
\bar\zeta\colon \CCCC(\Phi, A, G)\to \QQQ(\Phi, A)/\mathordsim_G.
$$
\begin{theorem}\label{thm:barzeta}
The map $\bar\zeta$ is a bijection.
\end{theorem}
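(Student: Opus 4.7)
The plan is to prove surjectivity and injectivity separately, both by reducing to the standard surjectivity-of-periods and (strong) Torelli theorem for $K3$ surfaces, combined with the refined-period formalism of \cite[Chapter VIII]{MR2030225} and the overlattice construction of Corollary~\ref{cor:unimodoverlattice}.

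For surjectivity, starting from a quartet $(M,T,\alpha,\theta)\in\QQQ(\Phi,A)$, I would first set $S:=U\oplus M$, where $U$ is a hyperbolic plane. Since $U$ is unimodular, $(D_S,-q_S)=(D_M,-q_M)$, so $\alpha$ is an anti-isometry to $\Dq{T}$. Corollary~\ref{cor:unimodoverlattice} then yields an even unimodular overlattice $H$ of $S\oplus T$ of signature $(3,19)$, hence isomorphic to the $K3$ lattice $\Lambda_{K3}$; fix such an identification. The positive sign structures of $U$ and of $\theta$ together determine one of the two connected components
\[
\Omega_T^+:=\{[\omega]\in \P(T\tensor\C)\mid \intf{\omega,\omega}=0,\ \intf{\omega,\bar\omega}>0,\ (\Real\omega,\Imag\omega)\text{ orients }\theta\}
\]
of the standard period domain of $T$. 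By the surjectivity of the period map for marked $K3$ surfaces, every point of $\Omega_T^+$ is realized by a marked $K3$ surface $(X,\eta)$ with $\eta\colon H^2(X,\Z)\isom\Lambda_{K3}$. At a generic period $[\omega]$, one has $\NS(X)=\eta\inv(U\oplus M)$ exactly; by the standard theory of elliptic $K3$ surfaces the primitive embedding $U\inj\NS(X)$ (whose image contains the classes of a fiber and a section) determines an elliptic fibration $f\colon X\to\P^1$ with zero section $s$, whose reducible-fiber components disjoint from $s$ are the roots of $\eta\inv(M)$. The hypothesis $\Roots(M)=\Roots(\Lgen{\Phi})$ guarantees that the root system $\Phi_f$ coincides with $\Phi$, and the torsion Mordell--Weil group equals $M/\Lgen{\Phi}\cong A$. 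The identification $\eta\inv|_{\Lgen{\Phi}}$ gives a marking $\phi$, and unwinding the definitions shows $\zeta(X,f,s,\phi)=(M,T,\alpha,\theta)$.

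For injectivity, suppose $\zeta(X,f,s,\phi)\sim_G\zeta(X',f',s',\phi')$ via an automorphism $g\in G$ and an isometry $\psi\colon T_f\isom T_{f'}$ compatible with the $\alpha$'s and mapping $\theta_f$ to $\theta_{f'}$. Using the markings, $g$ determines an isometry $M_{f,\phi}\isom M_{f',\phi'}$; together with $\id_U$ and $\psi$, and using the commutative square relating $\alpha_f$ and $\alpha_{f'}$, these extend canonically to an isometry $H^2(X,\Z)\isom H^2(X',\Z)$ of $K3$ lattices preserving the positive sign structures and the $G$-class of the marking. Transporting the period $[\omega_X]$ by this isometry into $\Omega_{T_{f'}}^+$ lands us in the same connected component as $[\omega_{X'}]$. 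Over a path in $\Omega_{T_{f'}}^+$ joining them, we pull back the universal family of marked $K3$ surfaces of \cite[Chapter VIII]{MR2030225} and the locally constant system of lattice embeddings $U\oplus M\inj H^2$, producing a marked connected family of elliptic $K3$ surfaces of type $(\Phi,A)$ whose monodromy is trivial on $\Phi$ and in particular lies in $G$; its endpoints are $G$-isomorphic to $(X,f,s,\phi)$ and $(X',f',s',\phi')$, so they are $G$-connected.

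The main obstacle I expect is controlling which periods produce the intended geometric structure: at Noether--Lefschetz loci inside $\Omega_T^+$ the Picard rank jumps, possibly introducing new roots (breaking $\Roots(M)=\Roots(\Lgen{\Phi_f})$), new isotropic classes (changing the fibration), or new sections. One has to verify that the generic period yields precisely the elliptic structure of type $(\Phi,A)$ (for surjectivity) and that along the connecting path in the injectivity argument the classes in $\eta\inv(U)$ remain nef and represented by a fiber and a section, so that the elliptic fibration with its $\Phi_f$-configuration deforms throughout the family. This is a standard application of lattice-polarized $K3$ theory and Bertini-type avoidance of countably many Hodge hypersurfaces, but it is the step requiring genuine care.
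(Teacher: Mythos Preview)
Your proposal is correct and follows essentially the same route as the paper: build the $K3$ lattice $H$ as the even unimodular overlattice of $(U\oplus M)\oplus T$ via $\alpha$, pick out the connected component $\Omega_{(T,\theta)}$ of the period domain corresponding to $\theta$, and appeal to the refined period map of \cite[Chapter VIII]{MR2030225} to realize the quartet by an actual marked elliptic $K3$ surface and to connect two $\sim_G$-equivalent quartets by a family. The paper packages this slightly more cleanly by directly invoking the existence of a universal family $\FFF_{(M,T,\alpha,\theta)}$ of $H$-marked $K3$ surfaces over an open dense subset $\Omega^0_{(T,\theta)}\subset\Omega_{(T,\theta)}$ on which the $H$-marking already sends $v_{\mathrm{fib}}$, $v_{\mathrm{zero}}$, and each $r\in\Phi$ to classes of irreducible curves; this absorbs exactly the Noether--Lefschetz and effectivity issues you flag as the main obstacle, and makes both surjectivity and injectivity one-liners once the family is in hand. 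Your aside about ``positive sign structures of $U$'' is unnecessary: the component $\Omega_T^+$ is determined by $\theta$ alone.
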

\begin{proof}
Let $(M, T, \alpha, \theta)$ be an element of $\QQQ(\Phi, A)$.
Let $U$ denote the 
even unimodular hyperbolic 
$\Z$-lattice of rank $2$ with a basis $\vfib, \vzero$ and the Gram matrix
$$
\left[
\begin{array}{cc}
 0 & 1 \\ 1 & -2
 \end{array}
 \right].
$$
We define $H$ to be the even unimodular overlattice of $(U\oplus M)\oplus T$
defined by the anti-isometry
$$
(D_{U\oplus M}, -q_{U\oplus M})=(D_{M}, -q_{M})
\maprightspsb{\sim}{\alpha}\;\Dq{T}
$$
of the discriminant forms given by $\alpha$.
Then $H$ is a $K3$-lattice.
An \emph{$H$-marking} of a $K3$ surface $X$ is an isometry $\mu\colon H\isom H^2(X, \Z)$.
\par
Let $\P_*(T\tensor \C)$ denote the projective space of
$1$-dimensional subspaces of $T\tensor\C$.
We put 
$$
\Omega_{T}:=\set{\C\omega\in \P_*(T\tensor \C)}{\intf{\omega, \omega}=0,\; \intf{\omega, \bar{\omega}}>0}.
$$
A non-zero vector  $\omega=u+\sqrt{-1} v\in T\tensor\C$ with $u, v\in V:=T\tensor\R$
satisfies $\C\omega\in \Omega_T$
if and only if $(u,v)$ belongs to
$$
Z:=\set{(x, y)\in V\times V}{\intf{x, x}=\intf{y,y}>0, \intf{x, y}=0}.
$$
The image $Z_1$ of the first projection $\pr\colon Z\to V$ is connected
and $\pi_1(Z_1)\cong \Z$.
Since $t_+=2$, the orthogonal complement of a vector $u\in Z_1$ in $V$ has signature $(1, t_-)$, and hence 
$\pr\inv (u)=\shortset{y\in V}{\intf{y, y}=\intf{u,u}, \intf{u, y}=0}$
has two connected components.
We can easily see that $\pi_1(Z_1)$ acts on the set of these  connected components trivially.
Therefore $\Omega_{T}$ has exactly two connected components,
and they are complex conjugate to each other.
For $\C \omega\in \Omega_T$,
the ordered pair of vectors $\Real\omega$ and $ \Imag\omega$ in $T\tensor\R$
defines an oriented positive definite $2$-dimensional subspace of $T\tensor\R$.
By this correspondence,
the set of connected components of $\Omega_T$ can be identified with
the set of positive sign structures of $T$.
Let $\Omega_{(T, \theta)}$ be the connected component
corresponding to $\theta$.
\par
By the theory of the refined period map of marked $K3$ surfaces,
which is stated in Barth, Hulek, Peters and Van de Ven~\cite[Chapter VIII]{MR2030225},
 we see that there exists a universal family
 $$
\FFF_{(M, T, \alpha, \theta)}\colon \XXX \to
\Omega^0_{(T, \theta)}
$$
of $H$-marked $K3$ surfaces $(X_t, \mu_t)$
with parameter space $\Omega^0_{(T, \theta)}$
being an open dense subset of $\Omega_{(T, \theta)}$
such that, for each $t\in \Omega^0_{(T, \theta)}$,
the $H$-marking $\mu_t\colon H\isom H^2(X_t, \Z)$
satisfies the following:
\begin{itemize}
\item $\mu_t\tensor\C$ maps $\C\omega_t$ 
to $H^{2,0}(X_t)$, where $\C\omega_t$ is the $1$-dimensional subspace of 
$T\tensor\C \subset H\tensor \C$
corresponding to the point $t$, 
and hence the N\'eron-Severi lattice $\NS(X_t)$ of $X_t$ 
contains the primitive sublattice $\mu_t(U\oplus M)$ of $H^2(X_t, \Z)$, and 
\item $\mu_t$ maps the vectors $\vfib\in U$, $\vzero\in U$,
and each $r\in \Phi\subset H$
to the classes of certain irreducible curves
$F$, $Z$, and $C_r$ on $X_t$, respectively.
\end{itemize}
Then the complete linear system
$|F|$ defines an elliptic fibration $f_t\colon X_t\to \P^1$
and $Z$ provides us with a section $s_t$ of $f_t$.
Moreover,
the set $\shortset{C_r}{r\in \Phi}$
is the set of smooth rational curves on $X_t$ contracted by $f_t$ and disjoint from $s_t$,
and hence $\Phi_t:=\Phi_{f_t}$ is equal to $\shortset{[C_r]}{r\in \Phi}$.
We have $M(\Phi_t)/L(\Phi_t)\cong M/L(\Phi)\cong A$.
Therefore $(X_t, f_t, s_t)$ is of type $(\Phi, A)$,
and the $H$-marking $\mu_t$ yields a marking $\phi_t\colon \Phi\isom \Phi_t$.
\par
Thus, each element $(M, T, \alpha, \theta)$ of $\QQQ(\Phi, A)$ gives 
a connected family $\FFF_{(M, T, \alpha, \theta)}$ of marked elliptic $K3$ surfaces of type $(\Phi, A)$.
By the existence of $H$-markings, 
the monodromy of the family $\shortset{\Phi_t}{t\in \Omega^0_{(T, \theta)}}$ 
of $ADE$-configurations is trivial.
Any marked elliptic $K3$ surface $(X, f,s,\phi)$ of type $(\Phi, A)$
is isomorphic to a member of the family $\FFF_{\zeta(X, f,s,\phi)}$.
Hence the surjectivity of $\bar{\zeta}$ follows.
It follows from the universality of the family $\FFF_{\zeta(X, f,s,\phi)}$
that, if $(M, T, \alpha, \theta)\sim_G (M\sprime, T\sprime, \alpha\sprime, \theta\sprime)$,
then each member of $\FFF_{\zeta(X, f,s,\phi)}$
is $G$-isomorphic to a member of $\FFF_{(M\sprime, T\sprime, \alpha\sprime, \theta\sprime)}$.
Hence $\bar{\zeta}$ is injective.
\end{proof}
\subsection{Computation of the set $\QQQ(\Phi, A)/\mathordsim_G$}
Thus our problem of computing the set $\CCCC(\Phi, A, G)$ is reduced to the calculation of 
the set $\QQQ(\Phi, A)/\mathordsim_G$.
\par
Recall that $G$ acts on the set $\EEE(\Phi, A)$ from the right.
We have a projection 
$$
\pr_1 \colon \QQQ(\Phi, A)/\mathordsim_G \to \EEE(\Phi, A)/G
$$
given by $(M, T, \alpha, \theta)\mapsto M$.
The set of even overlattices of $\Lgen{\Phi}$ and the action of $G$ on it
can be easily calculated by Proposition~\ref{prop:isot}. 
From each $G$-orbit, we choose an even overlattice $M$, 
calculate $M/\Lgen{\Phi}$ and $\Roots(M)$, 
and determine whether $\GGG(M)$ is empty or not
by the criterion of Theorem~\ref{thm:genusexistence}.
In this way, we can compute the set 
$\EEE(\Phi, A)/G$.
\begin{remark}
For the calculation of $\Roots(M)$,
the technique of the lattice reduction bases
due to Lenstra, Lenstra, and Lov{\'a}sz~\cite{MR682664}
is very useful.
See~\cite[Chapter 2]{MR1228206}.
\end{remark}
The notion of algebraic equivalence
of connected components 
defined in Definitions~\ref{def:algequivX}~and~\ref{def:algequivC} 
 is now succinctly defined as follows.
\begin{definition}
We say that two connected components $\CCC_1, \CCC_2 \in \CCCC(\Phi, A, G)$
are \emph{algebraically equivalent} if $\pr_1 (\bar{\zeta}(\CCC_1))=\pr_1 (\bar{\zeta}(\CCC_2))$ holds.
\end{definition}
For a positive sign structure $\theta$ of $T$,
let $-\theta$ denote the other positive sign structure.
The mapping $(M, T, \alpha, \theta)\mapsto (M, T, \alpha, -\theta)$
defines an involution
$$
\compconj \colon \QQQ(\Phi, A)/\mathordsim_G \;\to\; \QQQ(\Phi, A)/\mathordsim_G.
$$
It is obvious that, via the bijection $\bar{\zeta}$,
this involution $\compconj$ 
corresponds to the action of $\Gal(\C/\R)$ on
$\CCCC(\Phi, A, G)$.
\begin{definition}\label{def:compconj}
Let $\CCC_1, \CCC_2\in \CCCC(\Phi, A, G)$ be two connected components.
We say that $\CCC_1$ is  \emph{complex conjugate to $\CCC_2$} if
$\compconj(\bar{\zeta}(\CCC_1))=\bar{\zeta}(\CCC_2)$ holds.
We say that $\CCC_1$ is \emph{real} if $\compconj(\bar{\zeta}(\CCC_1))=\bar{\zeta}(\CCC_1)$ holds.
\end{definition}
\par
We fix an even overlattice $M\in \EEE(\Phi, A)$.
Our next task is to calculate 
the fiber of $\pr_1$ over the $G$-orbit $[M]$ containing $M$.
Let $\Stab(M)\subset G$ denote the stabilizer subgroup of $M$ 
for the action of $G$ on $\EEE(\Phi, A)$.
Then we have a natural homomorphism
$$
\Stab(M)\to \OG(M).
$$
To ease the notation in the next section, we put 
$$
\GGG:=\GGG(M), \quad \Dq{\GGG}:=(D_M, -q_M).
$$
We further denote by
$$
\AG\subset\OGDq{\GGG}
$$
the image of $\Stab(M)$ by the natural homomorphism 
$\OG(M)\to \OGDq{\GGG}$.
\begin{definition}\label{def:TTTGGG}
Let $\TTTGGG$ be the set of triples $(T, \alpha, \theta)$,
where $T$ is an even $\Z$-lattice belonging to $\GGG$,
$\alpha$ is an isomorphism $\Dq{\GGG}\isom \Dq{T}$ of finite quadratic forms,
and $\theta$ is a positive sign structure of $T$.
\end{definition}
We define an equivalence relation $\simAG$ on $\TTTGGG$ by the following.
\begin{definition}\label{def:simAG}
Let  $(T, \alpha, \theta)$ and $(T\sprime, \alpha\sprime, \theta\sprime)$
be triples belonging to   $\TTTGGG$.
We put 
$(T, \alpha, \theta)\simAG (T\sprime, \alpha\sprime, \theta\sprime)$ if 
there exist an element $g\in \AG$ and an isometry $\phi\colon T\isom T\sprime$ 
that satisfy the following:
\begin{itemize}
\item
The diagram 
$$
\begin{array}{ccc}
\Dq{\GGG} & \xrightarrow{g} & \Dq{\GGG} \\
\mapdownleft{\alpha} & & \mapdownright{\alpha\sprime} \\
\Dq{T} &\maprightsb{\funcq{\phi} }& \Dq{T\sprime}
\end{array}
$$
commutes.
\item The isometry  $\phi$ maps $\theta$ to $\theta\sprime$.
\end{itemize}
\end{definition}
Then it is easy to see that
the fiber of $\pr_1$ over $[M]\in \EEE(\Phi, A)/G$
is canonically identified with 
$\TTTGGG/\mathord{\simAG}$.
In the next section,
we present an algorithm to calculate the set $\TTTGGG/\mathord{\simAG}$.
\section{Miranda-Morrison theory}\label{sec:MM}
This section and the next section are devoted to 
purely lattice-theoretic investigations,
and are completely independent of the geometry of $K3$ surfaces.
\par
Let $\GGG$ be a non-empty genus of even $\Z$-lattices 
determined by a signature $(t_+, t_-)$ with $t_+=2$
and a non-degenerate finite quadratic form $\Dq{\GGG}$.
Let $\AG$ be a subgroup of $\OGDq{\GGG}$.
We give an algorithm to calculate 
the set $\TTTGGG/\mathord{\simAG}$ defined in Definitions~\ref{def:TTTGGG}~and~\ref{def:simAG}.
We put
$$
\Sign:=\{1, -1\}.
$$
For a vector $v$ of an even lattice,
we put
$$
Q(v):=\frac{\intf{v,v}}{2}.
$$
\subsection{Spinor norm}
Let $R$ be $\Z$, $\Zp$, or $\R$,
let $k$ denote the quotient field of $R$,
and let $L$ be an even $R$-lattice.
Let $v$ be a vector of $L\tensor k$ such that $Q(v)\ne 0$.
Then we have the \emph{reflection} $\tau (v)\in \OG(L\tensor k)$ defined by
$$
\tau(v)\colon x\mapsto x-\frac{\intf{x, v}}{Q(v)} v.
$$
The classical theorem of Cartan (see~\cite[Chapter 1]{MR522835}) says that 
$\OG(L\tensor k)$ is generated by reflections.
Suppose that an isometry $g\in \OG(L)$ is decomposed into a product
$\tau (v_1)\cdots \tau (v_m)$ of reflections in $\OG(L\tensor k)$.
We define the \emph{spinor norm}  $\spin (g)$ of $g$ by 
$$
\spin (g):= Q(v_1) \cdots Q(v_m) \bmod (k\sptimes )^2.
$$
It is known that $\spin (g)\in k\sptimes/ (k\sptimes )^2$
does not depend on the choice of the decomposition $g=\tau (v_1)\cdots \tau (v_m)$,
and hence the map $\spin \colon \OG(L)\to k\sptimes/ (k\sptimes )^2$ is a group homomorphism.
(See~\cite[Chapter 10]{MR522835}.)
\begin{remark}\label{rem:warningspin1}
We use
the definition of $\spin (g)$ of $g=\tau (v_1)\cdots \tau (v_m)$
given in~\cite{MMB}, 
which differs from the one given in ~\cite{MR522835} by the multiplicative factor $2^m\in k\sptimes/ (k\sptimes )^2$.
\end{remark}
The following is due to~\cite{MR842425}.
See also~\cite{MR834537}.
\begin{proposition}\label{prop:detspin}
Suppose that $L$ is an $\R$-lattice,
so that the spinor norm takes values in $\Sign$. 
The action of an isometry $g\in \OG(L)$
on the set of positive sign structures of $L$ is trivial if and only if
$\det (g)\cdot \spin (g)>0$ holds.
\end{proposition}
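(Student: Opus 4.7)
The plan is to reduce the statement to a check on reflections, using that both sides of the claimed equivalence are homomorphisms. The action of $\OG(L)$ on the (at most two-element) set of positive sign structures gives a homomorphism $\sigma\colon \OG(L)\to \Sign$, and the map $g\mapsto \det(g)\cdot\spin(g)$ is also a homomorphism $\OG(L)\to\Sign$. By Cartan's theorem, $\OG(L)$ is generated by reflections $\tau(v)$ with $Q(v)\ne 0$. So it suffices to verify $\sigma(\tau(v)) = \det(\tau(v))\cdot \spin(\tau(v))$ for every such $v$, which reduces the global assertion to a one-vector geometric computation.

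For a reflection we have $\det(\tau(v))=-1$ and, by the definition of spinor norm, $\spin(\tau(v)) = Q(v) \bmod (\R\sptimes)^2$, which equals $\sign(Q(v))\in \Sign$. Hence $\det(\tau(v))\cdot\spin(\tau(v)) = -\sign(Q(v))$. Now I would analyze the two cases. If $Q(v)>0$, I would extend $v/\sqrt{Q(v)}$ to an orthonormal basis $e_1,\dots,e_{s_+}$ of a maximal positive-definite subspace $\Pi$ of $L$; then $\tau(v)$ restricts to $\Pi$ as the reflection fixing $e_2,\dots,e_{s_+}$ and sending $e_1\mapsto -e_1$, which reverses the orientation of $\Pi$, so $\sigma(\tau(v))=-1$. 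If $Q(v)<0$, I would choose a maximal positive-definite subspace $\Pi\subset v\sperp$ (possible because $v$ is negative, so $v\sperp$ has signature $(s_+,s_- -1)$); then $\tau(v)$ fixes $\Pi$ pointwise, so $\sigma(\tau(v))=1$. In both cases $\sigma(\tau(v))= -\sign(Q(v))$, matching $\det(\tau(v))\cdot\spin(\tau(v))$.

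The remaining concern is degenerate situations: if $L$ is negative definite, there are no positive sign structures, so $\sigma$ is trivially the constant $1$; on the other hand every $Q(v)<0$, so for any decomposition into $m$ reflections both $\det$ and $\spin$ equal $(-1)^m$, giving product $+1$, consistent with the claim. The case $L$ positive definite is symmetric. The main (and essentially the only) obstacle is ensuring that the maximal positive-definite subspace containing (resp.\ orthogonal to) $v$ can be chosen and that its orientation behavior accurately tracks the connected component of the parameter space defined in Section~\ref{subsec:positivesignstructure}; this is straightforward because the space of positive sign structures is a homogeneous space for $\OG(L)$ and any two maximal positive-definite subspaces with compatible orientation are connected through a path on which the positivity is preserved. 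Combining the homomorphism reduction with the reflection computation yields the proposition.
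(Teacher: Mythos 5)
The paper does not prove this proposition at all: it simply cites Looijenga--Wahl~\cite{MR842425} (see also Miranda--Morrison~\cite{MR834537}) and moves on, so there is no ``paper's own proof'' to compare against. Your reduction-to-reflections argument is correct and complete, and it is essentially the standard proof of this fact. Both $\sigma$ and $g\mapsto\det(g)\cdot\spin(g)$ are homomorphisms to $\Sign$, so checking on a generating set of reflections suffices, and the two cases $Q(v)>0$ and $Q(v)<0$ are handled correctly. A couple of small remarks. First, it is worth noting explicitly that the paper's nonstandard normalization of the spinor norm (differing from Cassels's by a factor $2^m$, see Remark~\ref{rem:warningspin1}) is irrelevant here because $2$ is a square in $\R\sptimes$, so $\spin(\tau(v))=\sign(Q(v))$ under either convention. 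Second, the step where you assert that the two orientations of a fixed maximal positive-definite $\Pi$ lie in \emph{different} connected components of the parameter space deserves to be stated crisply: the oriented Grassmannian is a double cover of the connected unoriented Grassmannian, so having two components (as the paper asserts in Section~\ref{subsec:positivesignstructure}) forces each component to project bijectively, and hence $(\Pi,+)$ and $(\Pi,-)$ sit in different components. Your closing sentence gestures at this via the homogeneous-space structure, which is fine, but the double-cover observation is the cleanest justification. Finally, ``the case $L$ positive definite is symmetric'' is a slight misstatement (a positive-definite $L$ does have two positive sign structures, unlike the negative-definite case, where there are none), but since your reflection computation already covers the positive-definite case uniformly, this does not create a gap.
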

\subsection{The case of positive definite genus}
Suppose that $t_-=0$,
so that $\GGG$ is a genus of even positive definite $\Z$-lattices of rank $2$.
By an algorithm that goes back to Gauss~(see, for example,~\cite[Chapter 15]{CSB}),
we can make the complete set of isomorphism classes of 
even positive definite $\Z$-lattices of rank $2$
with discriminant $|D_{\GGG}|$.
From this list, we sort out those lattices 
whose discriminant forms are isomorphic to $\Dq{\GGG}$,
and calculate a complete set 
$$
\{T_1, \dots, T_k\}
$$
of representatives of the genus $\GGG$.
For each $T$ in this list,
we calculate the finite groups $\OG(T)$ and $\OGDq{T}$,
an isomorphism $\alpha_{0}\colon \Dq{\GGG}\isom \Dq{T}$,
and the natural homomorphism $\OG(T)\to \OGDq{T}$.
Then the set of isomorphisms 
from $\Dq{\GGG}$ to $\Dq{T}$
is equal to 
$$
\set{\alpha_{0} \cdot h}{h \in \OGDq{T}}=\set{h\sprime\cdot \alpha_{0} }{h\sprime \in \OGDq{\GGG}}.
$$
Let $\alpha_{0*}\colon \OGDq{\GGG}\isom\OGDq{T}$
be the isomorphism induced by $\alpha_{0}$.
Since $T$ is positive definite, 
an isometry $\tilde{h}$ of $T$ preserves the positive sign structures of $T$
if and only if $\det(\tilde{h})=1$. 
We make $\AG\subset \OGDq{\GGG}$ act on $\OGDq{T}\times \Sign$ from the left by 
$$
(g, (\gamma, \theta))\mapsto (\,\alpha_{0*}(g)\cdot \gamma, \; \theta\,),
\;\;\textrm{where $g\in \AG$ and $(\gamma, \theta) \in \OGDq{T}\times \Sign$}.
$$
We also make $\OG(T)$ act on $\OGDq{T}\times \Sign$ from the right by
$$
((\gamma, \theta), \;\tilde{h})\mapsto (\,\gamma \cdot h, \;\det(\tilde{h})\cdot\theta\,),
\;\;\textrm{where $h\in \OGDq{T}$ is induced by $\tilde{h}\in \OG(T)$}.
$$
We consider the set of orbits
$$
\Orb(T)\;:=\;\AG\,\backslash (\OGDq{T}\times \Sign)/ \,\OG(T)
$$
under these actions.
Then the set of all $(\,T, \; \alpha_{0}\cdot \gamma, \;\theta\,)\in \TTTGGG$,
where $T$ runs through the set $\{T_1, \dots, T_k\}$,
and for each $T$,
$ (\gamma, \theta)$
runs through the set of representatives of $\Orb(T)$, 
is a complete set of representatives of $\TTTGGG/\mathord{\simAG}$.
By this algorithm,
we compute Table~I.
\subsection{Miranda-Morrison theory}\label{subsec:MirandaMorison}
From now on to the end of this section,
we assume that $t_->0$.
Hence $\GGG$ is a genus of even indefinite $\Z$-lattices of rank $\ge 3$.
We formulate a refinement of Miranda-Morrison theory~\cite{MMB}
on the structure of a genus of this kind.
\par
We first review the original version of Miranda-Morrison theory,
which calculates the set $\TTTGGG\sprime/\mathordsim$ defined as follows.
Let $\TTTGGG\sprime$ be the set of pairs $(T, \alpha)$,
where $T$ is an even $\Z$-lattice belonging to $\GGG$,
and $\alpha$ is an isomorphism $\Dq{\GGG} \isom\Dq{T}$. 
For elements $(T, \alpha)$ and $(T\sprime, \alpha\sprime)$ of $\TTTGGG\sprime$,
we put 
$(T, \alpha)\sim (T\sprime, \alpha\sprime)$ if 
there exists an isometry $\phi\colon T\isom T\sprime$ such that
the diagram
$$
\begin{array}{ccc}
\Dq{\GGG} &= &\Dq{\GGG} \\
\mapdownleft{\alpha} & & \mapdownright{\alpha\sprime} \\
\Dq{T} &\maprightsb{\funcq{\phi} }& \Dq{T\sprime}
\end{array}
$$
commutes.
\par
We fix an element $(L, \lambda)$ of $\TTTGGG\sprime$, and put
\begin{eqnarray*}
\OGAz (L) &:= &\prod_{p} \OG(L\tensor\Zp), \\
\OGA(L) &:= &\set{(\sigma_p)\in \prod_{p} \OG(L\tensor\Qp)}{\sigma_p\in \OG(L\tensor\Zp)\;\;\textrm{for almost all}\;\; p}.
\end{eqnarray*}
Note that we have a natural homomorphism $\OG(L\tensor\Q)\to \OGA(L) $. 
Let $\adele{\sigma}=(\sigma_p)$ be an element of $\OGA(L)$.
Then there exists a unique $\Z$-submodule $L^\adelesigma $ of $L\tensor \Q$ such that
$L^\adelesigma \tensor\Zp =(L\tensor\Zp)^{\sigma_p}$ holds in $L\tensor\Qp$ for all $p$,
where $(L\tensor\Zp)^{\sigma_p}$ is the image of $L\tensor\Zp\subset L\tensor\Qp$
by $\sigma_p\in \OG(L\tensor\Qp)$.
(See~Theorem 4.1 in~\cite[Chapter VI]{MMB}.)
We restrict the symmetric bilinear form of $L\tensor \Q$ to $L^\adelesigma $.
Since
the $\Zp$-lattices
 $L^\adelesigma \tensor\Zp$ and $L\tensor\Zp$ are isomorphic  for all $p$,
 we see that $L^\adelesigma $ is an even $\Z$-lattice belonging to $\GGG$.
 Note that we have $L^\adelesigma =L$ if and only if $\adelesigma \in \OGAz (L)$.
 Let $\adeletau=(\tau_p)$ be an element of $\OGA(L)$.
 Then each component $\tau_p$ of $\adeletau$ induces an isometry $L^\adelesigma \tensor \Zp\isom L^{\adelesigma \adeletau}\tensor\Zp$, 
 and hence induces an isomorphism
 $$
 \funcq{\tau_p}\colon \Dq{L^\adelesigma \tensor\Zp}\isom \Dq{L^{\adelesigma \adeletau} \tensor\Zp}.
 $$
 Their product over the primes $p$ dividing $|\rmdisc(L)|=|D_{\GGG}|$ gives rise to an isomorphism 
 $$
 \funcq{\adeletau}|_{L^\adelesigma }\colon \Dq{L^\adelesigma }\isom \Dq{L^{\adelesigma \adeletau}}.
 $$
If $\adeletau\in \OGAz (L)$, then $ \funcq{\adeletau}|_{L}\in \OGDq{L}$.
As a corollary of Proposition~\ref{prop:etasurjZp}, we obtain the following:
\begin{proposition}\label{prop:adicsurjadele}
The homomorphism $\OGAz (L) \to\OGDq{L}$ given by $\adeletau\mapsto \funcq{\adeletau}|_{L}$
is surjective.
\end{proposition}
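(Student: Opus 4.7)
The plan is to reduce the surjectivity claim to its $p$-adic counterpart, Proposition~\ref{prop:etasurjZp}, via the canonical $p$-primary decomposition of the discriminant form.

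First I would observe that every automorphism $\gamma \in \OGDq{L}$ automatically respects the decomposition $\Dq{L} = \bigoplus_{p \mid d} \Dq{L \tensor \Zp}$ recorded in~\eqref{eq:pdecomp}, because the $p$-part $D_{L,p}$ is intrinsically characterised as the subgroup of elements annihilated by some power of $p$, and the forms on distinct $p$-parts are orthogonal. Therefore $\gamma$ splits as a product $\gamma = \prod_{p \mid d} \gamma_p$ with $\gamma_p \in \OGDq{L\tensor\Zp}$.

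Next, for each prime $p$ dividing $d := |D_L|$, Proposition~\ref{prop:etasurjZp} gives an isometry $\tau_p \in \OG(L\tensor\Zp)$ such that $\funcq{\tau_p} = \gamma_p$. For every prime $p$ not dividing $d$, the $\Zp$-lattice $L\tensor\Zp$ is unimodular, so $\Dq{L\tensor\Zp}=0$ and one sets $\tau_p := \id$. The resulting tuple $\adeletau := (\tau_p)_p$ lies in $\OGAz(L) = \prod_p \OG(L\tensor\Zp)$.

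Finally, I would verify that $\funcq{\adeletau}|_L = \gamma$. By the construction of $\funcq{\adeletau}|_L$ in the paragraph preceding the proposition, it is the product over the primes $p \mid d$ of the induced maps $\funcq{\tau_p}$ on the $p$-parts $\Dq{L\tensor\Zp}$; for $p \nmid d$ these parts are trivial and contribute nothing. Under the canonical identification $\Dq{L} = \bigoplus_{p\mid d}\Dq{L\tensor\Zp}$ this product is exactly $\prod_{p\mid d}\gamma_p = \gamma$, completing the argument. There is no real obstacle here: the content of the proposition is entirely Proposition~\ref{prop:etasurjZp} plus the functoriality of $\funcq{\cdot}$ with respect to the $p$-primary splitting, and the only thing to be slightly careful about is that $\OGAz(L)$ is only the local product over \emph{all} primes (not a restricted product), so choosing $\tau_p = \id$ outside the support of $D_L$ is unproblematic.
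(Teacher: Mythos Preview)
Your argument is correct and is exactly the approach the paper intends: the proposition is stated there simply as a corollary of Proposition~\ref{prop:etasurjZp}, and your proof spells out the obvious reduction via the $p$-primary decomposition~\eqref{eq:pdecomp} of $\Dq{L}$ together with the choice $\tau_p=\id$ for $p\nmid d$.
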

For $\adelesigma\in \OGA(L)$, 
we put 
 $$
 \lambda^\adelesigma :=\lambda\cdot \funcq{\adelesigma }|_{L}
 \colon \Dq{\GGG} \isom \Dq{L^\adelesigma }.
 $$
 Thus we obtain 
 a map $\OGA(L)\to \TTTGGG\sprime$
 given by $\adelesigma\mapsto (L^\adelesigma, \lambda^\adelesigma )$.
We show that this map is surjective.
Let $(T, \alpha)$ be an arbitrary element of $\TTTGGG\sprime$.
By Proposition~\ref{prop:corweakHasse},
we can assume that $T$ is embedded into $L\tensor\Q$ isometrically,
and hence we have $L\tensor \Q=T\tensor\Q$.
Then the equality $L\tensor \Zp=T\tensor\Zp$ holds in $L\tensor\Qp$ for almost all $p$.
For each $p$, we have an isometry $\sigma_p\colon L\tensor\Zp\isom T\tensor\Zp$,
which we regard as an element of $\OG(L\tensor \Qp)$.
We put $\adelesigma:=(\sigma_p)$.
Since $L\tensor \Zp=T\tensor\Zp$  for almost all $p$,
we see that  $\adelesigma$ belongs to $\OGA(L)$, and 
we have  $T=L^\adelesigma $.
We also obtain an isomorphism $\funcq{\adelesigma }|_{L}\colon \Dq{L}\isom \Dq{T}$.
Consider the diagram 
$$
\begin{array}{ccc}
&\Dq{\GGG}& \\
\rlap{\phantom{a}$\raise 5pt \hbox{$\scriptstyle \alpha$}\swarrow$ }& & 
\llap{\hbox{$\searrow \raise 4pt \hbox{$\scriptstyle \lambda$}$}\phantom{a}} \\
\Dq{T}& \mapleftsb{q_{\adelesigma}|L}& \Dq{L}.\mystrutd{12pt}
\end{array}
$$
We see that $\lambda\inv\cdot \alpha\cdot (\funcq{\adelesigma }|_{L})\inv$ belongs to $\OGDq{L}$.
By Proposition~\ref{prop:adicsurjadele},
there exists an element $\adelerho\in \OGAz(L)$
such that $\funcq{\adelerho}|_{L}=\lambda\inv\cdot \alpha\cdot (\funcq{\adelesigma }|_{L})\inv$.
Then we have $(T, \alpha)=(L^{\adelerho\adelesigma }, \lambda^{\adelerho\adelesigma })$.
Therefore the mapping $\adelesigma \mapsto (L^\adelesigma, \lambda^\adelesigma )$ 
is surjective, and we obtain
$$
 \OGA(L) \surj \TTTGGG\sprime \surj \TTTGGG\sprime/\mathordsim.
$$
\par
Let $U_p$ denote the image of 
the natural homomorphism $\Zp\sptimes\inj \Q_p\sptimes \to \Qp\sptimes/(\Qp\sptimes)^2$.
Recall that $\Det=\{1, -1\}$.
We put
$$
\Gamma_{p, 0}:=\Det\times U_p\;\; \subset \;\; \Gamma_p:=\Det\;\times\; \Qp\sptimes/(\Qp\sptimes)^2.
$$
Note that $\Gamma_p$ is an elementary $2$-group of rank $4$ if $p=2$ and of rank $3$ if $p>2$,
and that $\Gamma_{p, 0}$ is of index $2$ in $\Gamma_p$.
We consider the homomorphism
$$
\detspin\colon \OG(L\tensor\Zp)\to \Gamma_p.
$$
\begin{definition}
Let $\OGsharp (L\tensor \Zp)$ denote the kernel of the natural homomorphism
$\OG(L\tensor\Zp)\to \OGDq{L\tensor\Zp}$, and 
let $\SigmaSharp(L\tensor\Zp)$ denote the image of $\OGsharp(L\tensor\Zp)$ by $\detspin$.
\end{definition}
The abelian group $\SigmaSharp(L\tensor\Zp)$ 
is completely calculated in~\cite{MR0839800} and~\cite[Chapter VII]{MMB}.
In particular, we have the following proposition.
(See~Theorems 12.1-12.4
and Corollary 12.11 in~\cite[Chapter VII]{MMB}.)
Recall that we have assumed that $L$ is of rank $\ge 3$.
\begin{proposition}\label{prop:sigmasharp}
\begin{enumerate}[{\rm (1)}]
\item We have $\SigmaSharp(L\tensor\Zp)\subset \Gamma_{p, 0}$.
\item If $L\tensor\Zp$ is unimodular, 
then $\SigmaSharp(L\tensor\Zp)=\Gamma_{p, 0}$.
\end{enumerate}
\end{proposition}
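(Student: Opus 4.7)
The plan is to reduce both assertions to an explicit computation with reflections, using the Jordan decomposition of $L\tensor\Zp$ from Proposition~\ref{prop:normalformZplat}. The elementary observation that drives everything is: if $v\in L\tensor\Zp$ satisfies $Q(v)\in\Zp\sptimes$, then $\tau(v)$ preserves $L\tensor\Zp$ and acts trivially on $\Dq{L\tensor\Zp}$, since for any $x\in (L\tensor\Zp)\dual$ the coefficient $\intf{x,v}/Q(v)$ lies in $\Zp$ and hence $\tau(v)(x)-x\in L\tensor\Zp$. Consequently every such reflection belongs to $\OGsharp(L\tensor\Zp)$ with $\detspin(\tau(v))=(-1,Q(v))\in \Det\times U_p=\Gamma_{p,0}$.

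For Part (1), the remaining task is to show that the "unit-norm" reflections just described, together with the Eichler-style transvections that exchange vectors across distinct Jordan components, generate $\OGsharp(L\tensor\Zp)$ up to elements whose spinor norms already lie in $U_p$. I would proceed by induction on the number of Jordan components: a Witt-type extension argument lets one arrange any $g\in\OGsharp$ to preserve the Jordan filtration modulo $L\tensor\Zp$, and the induced automorphism of each unimodular quotient $p^\nu L_\nu/p^{\nu+1}L_\nu$ is a product of reflections by unit-norm vectors in $L_\nu$, each of which lifts to a reflection in $L\tensor\Zp$ with spinor norm in $U_p$. For Part (2), if $L\tensor\Zp$ is unimodular then $\OGsharp=\OG(L\tensor\Zp)$, and the inclusion $\SigmaSharp(L\tensor\Zp)\subset\Gamma_{p,0}$ is Part (1). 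For the reverse inclusion, Proposition~\ref{prop:normalformZplat} applied to a unimodular lattice of rank $\ge 3$ produces vectors $v_1,v_2\in L\tensor\Zp$ with $Q(v_1)$ a square unit and $Q(v_2)$ a non-square unit; the elements $\tau(v_1)$, $\tau(v_2)$, and $\tau(v_1)\tau(v_2)$ then realize all three non-identity cosets inside $\Det\times U_p$, yielding $\SigmaSharp(L\tensor\Zp)=\Gamma_{p,0}$.

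The technical obstacle lies in the $p=2$ case of Part (1). There $\Q_2\sptimes/(\Q_2\sptimes)^2$ is of rank $3$, the Jordan normal form involves the binary indecomposable blocks $u_\nu$ and $v_\nu$ from Table~\ref{table:indecomppfqf}, and the orthogonal groups of these blocks are \emph{not} generated by reflections in the lattice — additional Eichler transvections must be analyzed, each with a separate spinor-norm computation on the relevant binary block. The hypothesis $\rank(L)\ge 3$ enters precisely here, guaranteeing enough ambient room to adjust within one Jordan component using moves from another during the inductive lifting. The detailed bookkeeping is exactly what occupies Chapter VII of~\cite{MMB}, and any self-contained argument would have to reproduce a substantial part of it.
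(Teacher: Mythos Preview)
The paper does not prove this proposition; it simply records it as a consequence of the explicit computation of $\SigmaSharp(L\tensor\Zp)$ carried out in~\cite{MR0839800} and in Chapter~VII of~\cite{MMB} (Theorems~12.1--12.4 and Corollary~12.11). Your proposal is therefore not competing with a proof in the paper but rather sketching the ideas behind the cited results, and your closing paragraph already concedes that a self-contained argument would amount to reproducing that chapter.

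Two remarks on the sketch itself. First, your treatment of Part~(2) is implicitly written for odd $p$: there $U_p$ has order~$2$, so one square unit and one non-square unit suffice to generate $\Gamma_{p,0}$ via $\tau(v_1)$, $\tau(v_2)$, $\tau(v_1)\tau(v_2)$. For $p=2$ the group $U_2=\Z_2\sptimes/(\Z_2\sptimes)^2$ has order~$4$, so $\Gamma_{2,0}$ has eight elements and two reflections do not suffice. This is still easy to fix (in a hyperbolic summand with basis $e,f$ and $\intf{e,f}=1$ one has $Q(e+uf)=u$ for any unit $u$), but the argument as stated does not cover it. Second, in Part~(1) the phrase ``generate $\OGsharp$ up to elements whose spinor norms already lie in $U_p$'' is circular as written; what you presumably mean is that $\OGsharp$ is generated by elements each of whose spinor norm is visibly a unit. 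The inductive scheme via the Jordan filtration is indeed the standard route, but the lifting step and the transvection bookkeeping (especially across the binary blocks $u_\nu$, $v_\nu$ when $p=2$) are exactly where~\cite{MMB} does the work you are deferring to.
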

We put 
$$
\GammaAz:= \prod_{p} \Gamma_{p, 0} \;\; \subset\;\; 
\GammaA:=\set{(\gamma_p)\in \prod_{p} \Gamma_p}{ \gamma_p\in \Gamma_{p,0} \;\;\textrm{for almost all}\;\; p}.
$$
If $L\tensor\Zp$ is unimodular, we have $\OGsharp (L\tensor \Zp)=\OG(L\tensor \Zp)$,
and hence Proposition~\ref{prop:sigmasharp} implies that the image of $\OG(L\tensor \Zp)$
by $\detspin$ is $\Gamma_{p, 0}$.
Since $L\tensor\Zp$ is unimodular for almost all $p$, 
we obtain a homomorphism
$$
\detspin\colon \OGA(L)\to \GammaA.
$$
We put 
$$
\SigmaSharpA(L):=\prod_{p} \SigmaSharp(L\tensor\Zp)\;\; \subset\;\; \GammaAz.
$$
Finally, we put
$$
\GammaQ:=\Det\;\times\; \Q\sptimes/(\Q\sptimes)^2, 
$$
and embed $\GammaQ$ into $\GammaA$ naturally.
We have the following proposition. See Proposition 6.1 in~\cite[Chapter V]{MMB}.
\begin{proposition}\label{prop:surjGammaQ}
If $V$ is an indefinite $\Q$-lattice of rank $\ge 3$, then 
the homomorphism $(\det, \spin)\colon \OG(V) \to\GammaQ$ is surjective.
\end{proposition}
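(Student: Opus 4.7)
The plan is to invoke the Cartan theorem cited earlier, which states that $\OG(V)$ is generated by reflections $\tau(v)$ for anisotropic vectors $v \in V$. A single reflection satisfies $\det(\tau(v)) = -1$ and, by the very definition of the spinor norm recalled just above, $\spin(\tau(v)) = Q(v) \bmod (\Q\sptimes)^2$. Hence the image of $(\det,\spin)$ is precisely the subgroup of $\GammaQ = \Det \times \Q\sptimes/(\Q\sptimes)^2$ generated by the elements $(-1,\,Q(v))$. Surjectivity onto $\GammaQ$ therefore reduces to two statements: (a) there exists at least one anisotropic $v\in V$, which is immediate since $V$ is nondegenerate of rank $\ge 3$; and (b) the subgroup $H\subset \Q\sptimes/(\Q\sptimes)^2$ generated by $\{\,Q(v)\mid v\in V,\ Q(v)\ne 0\,\}$ equals all of $\Q\sptimes/(\Q\sptimes)^2$. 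Indeed, given (a) and (b), products of an even or odd number of reflections realize every element of $\{+1\}\times H$ or $\{-1\}\times H$, respectively.

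The key step is (b). I would carry it out by a local--global argument. At the archimedean place, indefiniteness of $V$ gives $Q(V\tensor\R\setminus\{0\}) = \R\sptimes$. At each finite place $p$, a nondegenerate quadratic space over $\Qp$ of rank $\ge 3$ is either isotropic, in which case it is universal and represents every element of $\Qp\sptimes$, or anisotropic of rank exactly $3$ and represents every class in $\Qp\sptimes/(\Qp\sptimes)^2$ except possibly one. Given $a\in\Q\sptimes$, one then seeks a global vector $v\in V$ with $Q(v)\in a\cdot (\Q\sptimes)^2$; if $a$ is not itself represented one replaces it by $a\cdot b$ for a suitable auxiliary $b$ whose representability at all places can be arranged. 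Hasse--Minkowski (weakly cited in the paper as Proposition~\ref{prop:corweakHasse}) then promotes a compatible system of local representations to a global one, showing that the subgroup $H$ generated by the global values $Q(v)$ exhausts $\Q\sptimes/(\Q\sptimes)^2$.

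The main obstacle is the rank-$3$ anisotropic case: here an individual $Q(v)$ need not realize a given square class. For instance, on $V=\langle 1,1,-3\rangle$ over $\Q$, the form $x^2+y^2-3z^2$ does not represent $3$, yet $3\in H$ because $-3 = Q(0,0,1)$ and $-1 = Q(1,1,1)$ are represented and $3\equiv (-3)(-1) \bmod (\Q\sptimes)^2$. So one cannot work class by class but must analyze the subgroup generated by the represented classes, a calculation governed by the product formula for Hilbert symbols at the finitely many bad primes. This is the classical content of Eichler's theorem on spinor genera, and is the substance of the reference to Proposition~6.1 of Chapter~V of Miranda--Morrison.
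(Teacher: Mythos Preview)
The paper itself gives no proof of this proposition: it simply refers the reader to Proposition~6.1 in Chapter~V of Miranda--Morrison. Your sketch therefore goes well beyond what the paper does, and its overall strategy (reflections generate $\OG(V)$, then a local--global analysis of the values $Q(v)$ via Hasse--Minkowski) is indeed the standard one underlying that reference.

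There is, however, a genuine slip in your reduction. You assert that once the subgroup $H\subset\Q\sptimes/(\Q\sptimes)^2$ generated by the nonzero values $Q(v)$ is everything, ``products of an even or odd number of reflections realize every element of $\{+1\}\times H$ or $\{-1\}\times H$''. This does not follow. The image of $(\det,\spin)$ is the subgroup of $\Det\times\Q\sptimes/(\Q\sptimes)^2$ generated by the elements $(-1,Q(v))$; its intersection with $\{+1\}\times\Q\sptimes/(\Q\sptimes)^2$ consists of the \emph{even} products of represented classes, which a priori form an index-$2$ subgroup $H_0$ of $H$. From $H=\Q\sptimes/(\Q\sptimes)^2$ alone one cannot conclude $H_0=H$: for that one needs an \emph{odd} relation among represented classes, e.g.\ that $Q$ represents a rational square, or more generally that $Q(v_1)Q(v_2)Q(v_3)\in(\Q\sptimes)^2$ for some anisotropic $v_i$. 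Your own example illustrates the danger: from the three diagonal values $3,\,7,\,-21$ of $\langle 3,7,-21\rangle$ one gets $H=\langle -1,3,7\rangle$ but only $H_0=\{1,21,-3,-7\}$, and this form does \emph{not} represent $1$ over $\Q_7$, hence not over $\Q$. One must bring in further represented values (e.g.\ $Q(e_1+e_2)=10$) and argue more carefully to close the gap. This extra step is precisely part of what is packaged in the cited result of Miranda--Morrison.
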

\par
One of the principal results of Miranda-Morrison theory is as follows
(see Theorem 3.1 in~\cite[Chapter VIII]{MMB}).
\begin{theorem}\label{thm:originalMM}
Let $\adelesigma $ and $\adeletau$ be in $\OGA(L)$.
Then we have $(L^\adelesigma, \lambda^\adelesigma)\sim (L^\adeletau, \lambda^\adeletau)$ if and only if
$$
(\det( \adelesigma ), \spin (\adelesigma ) )\equiv (\det (\adeletau), \spin (\adeletau))
\;\;\bmod\;\; \GammaQ \cdot \SigmaSharpA(L)
$$
holds in $\GammaA$.
In particular,
we can endow the set $\TTTGGG\sprime/\mathordsim$ with a structure of  abelian group. 
\end{theorem}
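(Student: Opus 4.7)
The strategy is to analyze the map $\psi\colon \OGA(L)\to\TTTGGG\sprime/\mathordsim$ sending $\adelesigma\mapsto [(L^\adelesigma,\lambda^\adelesigma)]$, whose surjectivity has already been established, and to identify its fibers with the cosets of $N:=\Phi\inv(\GammaQ\cdot\SigmaSharpA(L))\subset\OGA(L)$, where $\Phi:=\detspin$. Because $\GammaA$ is an abelian $2$-group, $N$ is automatically normal, so once this fiber description is proven the induced bijection $\OGA(L)/N\isom\TTTGGG\sprime/\mathordsim$ transports the abelian group structure of $\GammaA/(\GammaQ\cdot\SigmaSharpA(L))$ to the target, yielding the last sentence of the theorem.

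For the direction $(\Rightarrow)$, I would start from an equivalence $\phi\colon L^\adelesigma\isom L^\adeletau$ and extend $\phi$ to $\phi_\Q\in \OG(L\tensor\Q)$ using that both lattices sit inside the same rational vector space. Set $\adelerho:=\adelesigma\cdot\phi_\Q\cdot\adeletau\inv$, so that $\adelesigma\phi_\Q=\adelerho\adeletau$ in $\OGA(L)$. A component-wise check using the identity $\phi_\Q(L^\adelesigma\tensor\Zp)=L^\adeletau\tensor\Zp$ shows $\adelerho\in\OGAz(L)$. The cocycle rule $\funcq{\adele{\alpha}\adele{\beta}}|_L=\funcq{\adele{\alpha}}|_L\cdot\funcq{\adele{\beta}}|_{L^{\adele{\alpha}}}$, applied to $\adelesigma\phi_\Q=\adelerho\adeletau$ and combined with the compatibility $\lambda^\adelesigma\cdot\funcq{\phi_\Q}|_{L^\adelesigma}=\lambda^\adeletau$, then forces $\funcq{\adelerho}|_L=\id$, so $\adelerho\in\prod_p\OGsharp(L\tensor\Zp)$ and $\Phi(\adelerho)\in\SigmaSharpA(L)$. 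Applying $\Phi$ and using the abelianness of $\GammaA$ yields $\Phi(\adelesigma)\Phi(\adeletau)\inv\in\GammaQ\cdot\SigmaSharpA(L)$.

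For the converse $(\Leftarrow)$, write $\Phi(\adelesigma)\Phi(\adeletau)\inv=\gamma_\Q\cdot\gamma^\sharp$ and lift $\gamma_\Q$ to $\phi_\Q\in\OG(L\tensor\Q)$ via Proposition~\ref{prop:surjGammaQ}, and $\gamma^\sharp$ to $\adelerho^\sharp\in\prod_p\OGsharp(L\tensor\Zp)$ by the very definition of $\SigmaSharpA(L)$. Put $\adelesigma_0:=\adelerho^\sharp\cdot\adeletau\cdot\phi_\Q$. Left multiplication by $\adelerho^\sharp$ fixes $(L^\adeletau,\lambda^\adeletau)$ pointwise, while right multiplication by $\phi_\Q$ exhibits $\phi_\Q$ itself as an isometry $L^\adeletau\isom L^{\adeletau\phi_\Q}$ intertwining the relevant discriminant isomorphisms; hence $(L^{\adelesigma_0},\lambda^{\adelesigma_0})\sim(L^\adeletau,\lambda^\adeletau)$, and the abelianness and $2$-torsion of $\GammaA$ yield $\Phi(\adelesigma_0)=\Phi(\adelesigma)$.

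The decisive remaining step is to show that if $\adele{\eta}\in\OGA(L)$ satisfies $\Phi(\adele{\eta})=1$, then $(L^{\adele{\eta}},\lambda^{\adele{\eta}})\sim (L,\lambda)$; equivalently that $\Ker\Phi$ acts trivially on equivalence classes of pairs. \emph{This is the main obstacle}, and the one place where the indefinite, rank-at-least-three hypothesis on $L$ is unavoidable: it rests on strong approximation for the spinor kernel, which permits $\adele{\eta}$ to be factored as a global isometry of $L\tensor\Q$ of trivial $\detspin$ times an integral adele whose image in $\OGDq{L}$ is trivial. Both kinds of factors then preserve the equivalence class by the very mechanisms already used in the $(\Leftarrow)$ construction; in essence, this is the classical fact that in the indefinite rank-$\ge 3$ range, spinor genus coincides with isomorphism class.
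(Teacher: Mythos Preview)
Your proposal is correct and follows essentially the same route as the paper (which cites Miranda--Morrison for Theorem~\ref{thm:originalMM} and gives the parallel argument in detail for the refined Theorem~\ref{thm:refinedMM}): the forward direction uses that $\adelesigma\cdot\phi\cdot\adeletau\inv$ lands in $\prod_p\OGsharp(L\tensor\Zp)$, and the converse combines the surjectivity of $\OG(L\tensor\Q)\to\GammaQ$ (Proposition~\ref{prop:surjGammaQ}) with strong approximation (Theorem~\ref{thm:strongapproximation}). The only organizational difference is that the paper builds the witnessing isometry $\phi=\xi\cdot\psi$ in one stroke by applying Theorem~\ref{thm:strongapproximation} to $\adelepsi\sprime:=\xi\inv\adelesigma\inv\tilde{\adele{g}}\adeletau$, whereas you first produce an auxiliary $\adelesigma_0$ with $(L^{\adelesigma_0},\lambda^{\adelesigma_0})\sim(L^\adeletau,\lambda^\adeletau)$ and then invoke strong approximation to bridge $\adelesigma_0$ and $\adelesigma$; note that this last step genuinely needs Theorem~\ref{thm:strongapproximation} in its full form (with an arbitrary $\adelesigma$), not just the $\adelesigma=1$ instance, so your ``equivalently'' is justified precisely because the theorem is stated that way.
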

The main ingredient of the proof of Theorem~\ref{thm:originalMM}
is the following corollary (Theorem 2.2 in~\cite[Chapter VIII]{MMB}) of the strong approximation theorem (Theorem 7.1 in~\cite[Chapter 10]{MR522835})
for the \emph{spin group}
$$
\Theta_{\A}(L):=\Ker (\detspin\colon \OGA(L)\to\GammaA).
$$
Recall that $L$ is indefinite of rank $\ge 3$.
\begin{theorem}\label{thm:strongapproximation}
Let $\adelesigma $ be an element of $\OGA(L)$.
For any element $\adelepsi\sprime$ of $\Theta_{\A}(L)$,
there exists 
an isometry $\psi\in \OG(L\tensor\Q)$ such that 
$(\det(\psi), \spin(\psi))=(1,1)$,
that
$L^{\adelesigma \psi}=L^{\adelesigma \adelepsi\sprime}$, 
and that $\funcq{\psi}|_{L^\adelesigma }\colon \Dq{L^\adelesigma}\isom \Dq{L^{\adelesigma \psi}}$ is equal to $\funcq{\adelepsi\sprime}|_{L^\adelesigma }$.
\end{theorem}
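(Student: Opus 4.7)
The plan is to deduce the statement from the classical strong approximation theorem for the spin group of the indefinite $\Q$-quadratic space $V:=L\tensor \Q$ (which has rank $\ge 3$). First I would simplify by conjugating: replacing $\adelepsi\sprime$ by $\adelesigma \adelepsi\sprime \adelesigma\inv$ and the unknown $\psi$ by $\adelesigma \psi \adelesigma\inv$ reduces the problem to the case $\adelesigma = 1$. So it suffices, given any $\adelepsi\in \Theta_{\A}(L)$, to find $\psi\in \OG(V)$ with $(\det(\psi),\spin(\psi))=(1,1)$, with $L^{\psi}=L^{\adelepsi}$, and with $\funcq{\psi}|_L=\funcq{\adelepsi}|_L$.

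Next I would set up the appropriate $p$-adic open neighborhoods. Let $S$ be the finite set of primes consisting of $2$, of all $p$ at which $L\tensor \Zp$ is non-unimodular, and of all $p$ for which $\adelepsi_p\notin \OG(L\tensor\Zp)$. For each $p\in S$, let $U_p\subset \OG(L\tensor \Zp)$ be the kernel of the natural reduction $\OG(L\tensor \Zp)\to \OGDq{L\tensor \Zp}$; this is a compact open subgroup of $\OG(V\tensor\Qp)$, and $\adelepsi_p\cdot U_p$ is an open neighborhood of $\adelepsi_p$. For $p\notin S$, set $U_p := \OG(L\tensor\Zp)$; then $\adelepsi_p\in U_p$. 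The three desired conclusions for a given $\psi\in \OG(V)$ will follow as soon as $\psi\in \adelepsi_p\cdot U_p$ at every prime $p$: indeed, at each $p$ this forces $\psi$ and $\adelepsi_p$ to send $L\tensor \Zp$ to the same $\Zp$-submodule of $V\tensor \Qp$ (giving $L^{\psi}=L^{\adelepsi}$ by the uniqueness in Theorem~4.1 of~\cite[Chapter~VI]{MMB}), and it forces $\adelepsi_p\inv\psi$ to act trivially on $\Dq{L\tensor \Zp}$ (which for $p\notin S$ is automatic since the discriminant is trivial), so that $\funcq{\psi}|_L=\funcq{\adelepsi}|_L$.

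The core step is then to produce such a $\psi$ with $(\det(\psi),\spin(\psi))=(1,1)$. Because $V$ is indefinite of rank $\ge 3$, the real spin group $\mathrm{Spin}(V\tensor\R)$ is non-compact, and strong approximation (Theorem~7.1 of~\cite[Chapter~10]{MR522835}) applies to the simply connected semisimple $\Q$-group $\mathrm{Spin}(V)$: its $\Q$-points are dense in the finite adelic points. Pulling this back along the double cover $\mathrm{Spin}\to \mathrm{SO}$ and combining with Proposition~\ref{prop:surjGammaQ} to fix the $(\det,\spin)$-class, one obtains that the global kernel $\OG(V)\cap \Theta_{\A}(L)$ is dense in $\Theta_{\A}(L)$ for the restricted-product topology on $\OGA(L)$. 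Applying density to the open neighborhood $\prod_{p} (\adelepsi_p\cdot U_p)$ of $\adelepsi$ produces the required $\psi$, which by the preceding paragraph verifies all three conclusions. The main obstacle I expect is the careful translation between the spin double cover (where strong approximation is stated) and the adelic orthogonal group (where the theorem is phrased), in particular ensuring that the open neighborhoods $U_p$ are compatible with lifting to $\mathrm{Spin}$ and that the $\{\pm 1\}$-ambiguity is absorbed by Proposition~\ref{prop:surjGammaQ}; once this bookkeeping is done, verification of $L^\psi=L^{\adelepsi}$ and of $\funcq{\psi}|_L=\funcq{\adelepsi}|_L$ is immediate from the choice of $U_p$.
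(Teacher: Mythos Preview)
Your approach is essentially the same as the paper's: the paper does not give a full proof but only the one-sentence sketch following the statement (``Indeed, the set of all $\adeletau\in \Theta_{\A}(L)$ \dots\ is a non-empty open subset of $\Theta_{\A}(L)$ whose $p$-component coincides with $\Theta(L\tensor\Zp)$ for almost all $p$''), and then appeals to strong approximation for the spin group. Your argument spells out exactly this, identifying the open neighborhood explicitly via the subgroups $U_p$ and applying density.

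One correction: your conjugation reduction to $\adelesigma=1$ is mis-stated. Conjugating the unknown $\psi\in\OG(L\tensor\Q)$ by the adelic element $\adelesigma$ does not in general produce a global isometry, so the substitution $\psi\mapsto\adelesigma\psi\adelesigma\inv$ is illegitimate. The reduction you want is simpler and requires no conjugation: the conditions $L^{\adelesigma\psi}=L^{\adelesigma\adelepsi\sprime}$ and $\funcq{\psi}|_{L^\adelesigma}=\funcq{\adelepsi\sprime}|_{L^\adelesigma}$ involve only the lattice $N:=L^{\adelesigma}$, so one may replace $L$ by $N$ (which lies in the same genus and satisfies $\OGA(N)=\OGA(L)$, $\Theta_{\A}(N)=\Theta_{\A}(L)$) and take $\adelesigma=1$. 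Alternatively, as in the paper, one can simply carry $L^{\adelesigma}$ through the argument without any reduction. This flaw is cosmetic and does not affect the rest of your proof.

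A smaller remark: the appeal to Proposition~\ref{prop:surjGammaQ} is unnecessary. Strong approximation for $\mathrm{Spin}(V)$ directly yields density of the global kernel of $\detspin$ in $\Theta_{\A}(L)$ upon pushing forward along $\mathrm{Spin}\to\mathrm{SO}$; the $\{\pm 1\}$-ambiguity in the cover is harmless since you only need the image, and no separate adjustment of $(\det,\spin)$-class is required.
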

Indeed, the set of all $\adeletau\in \Theta_{\A}(L)$
 that satisfy $L^{\adelesigma \adeletau}=L^{\adelesigma \adelepsi\sprime}$ and $\funcq{\adeletau}|_{L^\adelesigma }=\funcq{\adelepsi\sprime}|_{L^\adelesigma }$ 
is a non-empty open subset of $\Theta_{\A}(L)$ whose $p$-component coincides
with 
$$
\Theta(L \tensor\Zp):=\Ker (\detspin\colon \OG(L \tensor \Zp)\to\Gamma_{p})
$$
for almost all $p$.
\begin{remark}\label{rem:warningspin2}
Even though the definition of the spinor norm 
in~\cite{MMB} and in this paper differs from the one given in~\cite{MR522835},
the definition of the spin group is not affected, 
because, 
for any element $g=\tau (v_1)\cdots \tau (v_m)$ of a spin group,
the condition $\det(g)=1$ implies $m\equiv 0\bmod 2$. 
See Remark~\ref{rem:warningspin1}.
\end{remark}
\subsection{A refinement of Miranda-Morrison theory}
We refine Theorem~\ref{thm:originalMM}
in order to incorporate the positive sign structures and the action of $\AG$. 
\par
As in the previous section,
we fix an element $(L, \lambda, \theta)$ of $\TTTGGG$.
For each $\adelesigma \in \OGA(L)$,
we have $L^\adelesigma \tensor\R=L\tensor \R$,
and hence 
the fixed positive sign structure $\theta$ of $L$ 
induces a positive sign structure on $L^\adelesigma $,
which is denoted by the same symbol $\theta$,
and let $-\theta$ denote the other positive sign structure of $L^\adelesigma$.
Recall that $\Sign=\{\pm 1\}$.
The surjectivity of the map $\OGA(L)\to \TTTGGG\sprime/\mathordsim$ defined in the previous section implies that 
 the mapping
$$
(\adelesigma, \varepsilon) \mapsto (L^\adelesigma, \lambda^\adelesigma, \varepsilon \theta)
$$
induces a surjective map
$$
 \OGA(L)\times \Sign \to \TTTGGG/\mathord{\simAG}.
$$
\begin{definition}
We define a homomorphism
$$
\Psi_p\colon \OGDq{L\tensor\Zp} \to \Gamma_p/\SigmaSharp(L\tensor\Zp)
$$
by $g\mapsto (\det (\tilg), \spin (\tilg))\bmod \SigmaSharp(L\tensor\Zp)$,
where $\tilg\in \OG(L\tensor\Zp)$ is an isometry that induces $g$ on $\Dq{L\tensor\Zp}$.
(Since the natural homomorphism 
$\OG(L\tensor\Zp)\to \OGDq{L\tensor\Zp}$
is surjective (see Proposition~\ref{prop:etasurjZp}), 
we can always find a lift $\tilg$ of $g$,
and by the definition of $\SigmaSharp(L\tensor\Zp)$,
we see that $\Psi_p(g)$ does not depend on the choice of the lift $\tilg$.)
\end{definition}
Since $\OGDq{L}$ is a product of $\OGDq{L\tensor\Zp}$,
all of which is trivial except for $p$ dividing $|\rmdisc(L)|=|D_{\GGG}|$,
we obtain a homomorphism
\begin{equation*}\label{eq:liftedhom}
\Psi_{\A}\colon \OGDq{L}\to \GammaA/ \SigmaSharpA(L).
\end{equation*}
\begin{definition}
Let $\AG^\lambda$ denote the subgroup of $\OGDq{L}$ corresponding to the given subgroup 
$\AG\subset \OGDq{\GGG}$
by the fixed isomorphism $\lambda\colon \Dq{\GGG}\isom \Dq{L}$:
$$
\AG^\lambda:=\set{\lambda\inv\cdot g\cdot \lambda \in \OGDq{L}}{g\in \AG}.
$$
We then define 
$\Sigma (L, \AG^\lambda)$
to be the subgroup 
of $\GammaA$ containing $\SigmaSharpA(L)$ such that
\begin{equation}\label{eq:SigmaLG}
\Psi_{\A}(\AG^\lambda)=\Sigma (L, \AG^\lambda)/\SigmaSharpA(L)
\end{equation}
holds; that is, 
$$
\Sigma (L, \AG^\lambda):=\sethd{\adele{\gamma}\in \GammaA}{7cm}{%
\textrm{there exists an element $\adelesigma\in \OGAz(L)$ such that $\funcq{\adelesigma}|_L\in \AG^\lambda$ and 
that $(\det(\adelesigma), \spin(\adelesigma))=\adele{\gamma}$}%
}.
$$
\end{definition}
We have a natural homomorphism 
$\GammaQ \to \Sign$
that maps $(d, s)$ to the sign of $ds$, where $d\in \Det$ and $s\in \Q\sptimes/(\Q\sptimes)^2$.
We then 
define an embedding 
$$
\GammaQ\inj \GammaA\times \Sign
$$
by the product of the natural embedding $\GammaQ\inj \GammaA$ and the above homomorphism $\GammaQ \to \Sign$,
and denote by $\Gamma^{\mathordsim}_{\Q}$ the image of $\GammaQ$  in $\GammaA\times \Sign$.
\par
The main result of this subsection is as follows:
\begin{theorem}\label{thm:refinedMM}
Let $(\adelesigma, \varepsilon)$ and $(\adeletau, \eta)$ be elements of $\OGA(L)\times \Sign$.
Then we have $(L^\adelesigma, \lambda^\adelesigma, \varepsilon)\simAG(L^\adeletau, \lambda^\adeletau, \eta)$ if and only if
\begin{equation}\label{eq:thecond}
(\det( \adelesigma ), \spin (\adelesigma ), \varepsilon )\equiv (\det (\adeletau), \spin (\adeletau), \eta)\;\;\bmod\;\; \Gamma^{\mathordsim}_{\Q} \cdot (\Sigma (L, \AG^\lambda)\times \{1\})
\end{equation}
holds in $\GammaA\times \Sign$. 
\end{theorem}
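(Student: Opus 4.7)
The plan is to establish both implications of Theorem~\ref{thm:refinedMM} by reducing to Theorem~\ref{thm:originalMM}, the strong approximation statement in Theorem~\ref{thm:strongapproximation}, and Proposition~\ref{prop:surjGammaQ}, while using Proposition~\ref{prop:detspin} to translate between positive sign structures of $L\tensor \R$ and the sign of the spinor norm.

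For the forward direction, suppose the equivalence $(L^{\adelesigma}, \lambda^{\adelesigma}, \varepsilon\theta)\simAG(L^{\adeletau}, \lambda^{\adeletau}, \eta\theta)$ is witnessed by $(g,\phi)$ with $g\in\AG$ and $\phi\colon L^{\adelesigma}\isom L^{\adeletau}$. I would view $\phi$ as an element of $\OG(L\tensor\Q)\subset \OGA(L)$ via the diagonal embedding and set $\adelerho:=\adelesigma\cdot\phi\cdot\adeletau\inv$. Because $\phi$ carries $L^{\adelesigma}$ onto $L^{\adeletau}$, at each prime $p$ the element $\sigma_p\phi\tau_p\inv$ preserves $L\tensor\Zp$, so $\adelerho\in\OGAz(L)$. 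Unwinding the commuting square with $\lambda^{\adelesigma}=\lambda\cdot\funcq{\adelesigma}|_L$ and $\lambda^{\adeletau}=\lambda\cdot\funcq{\adeletau}|_L$ forces $\funcq{\adelerho}|_L=\lambda\inv g\lambda\in\AG^\lambda$, and hence $(\det(\adelerho),\spin(\adelerho))\in\Sigma(L,\AG^\lambda)$ by~\eqref{eq:SigmaLG}. Taking $(\det,\spin)$ of the identity $\adelesigma=\adelerho\cdot\adeletau\cdot\phi\inv$ in $\GammaA$ then yields the required congruence of the first two coordinates modulo $\GammaQ\cdot\Sigma(L,\AG^\lambda)$; the third coordinate follows from Proposition~\ref{prop:detspin} applied to $\phi$ acting on $L\tensor\R$, since the assumption $\phi(\varepsilon\theta)=\eta\theta$ forces $\sign(\det(\phi)\spin(\phi))=\varepsilon\eta$, which is exactly the sign attached to $(\det(\phi),\spin(\phi))\in\GammaQ$ under the embedding $\GammaQ\inj\Gamma^{\mathordsim}_\Q$.

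For the reverse direction, the congruence produces $(d,s)\in\GammaQ$ and $\adelerho\in\OGAz(L)$ with $\funcq{\adelerho}|_L=\lambda\inv g\lambda$ for some $g\in\AG$, satisfying the corresponding identity in $\GammaA$ together with $\varepsilon\eta=\sign(ds)$ in $\Sign$. The first step is to absorb the $\AG$-contribution by replacing $\adeletau$ with $\adeletau\sprime:=\adelerho\cdot\adeletau$: since $\adelerho\in\OGAz(L)$, one has $L^{\adeletau\sprime}=L^{\adeletau}$ while $\lambda^{\adeletau\sprime}=g\cdot\lambda^{\adeletau}$, and the pair $(g,\id)$ then witnesses $(L^{\adeletau\sprime},\lambda^{\adeletau\sprime},\eta\theta)\simAG(L^{\adeletau},\lambda^{\adeletau},\eta\theta)$. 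It therefore suffices to construct an isometry $\phi\colon L^{\adelesigma}\isom L^{\adeletau\sprime}$ realizing the equivalence with $g=1$. Proposition~\ref{prop:surjGammaQ} yields $\psi\in\OG(L\tensor\Q)$ with $(\det(\psi),\spin(\psi))=(d,s)$, and then $\adelesigma$ and $\adeletau\sprime\cdot\psi$ have the same image in $\GammaA$, so their ratio lies in $\Theta_{\A}(L)$. Applying Theorem~\ref{thm:strongapproximation} produces $\psi_1\in\OG(L\tensor\Q)$ with $(\det(\psi_1),\spin(\psi_1))=(1,1)$ such that $\phi:=(\psi\psi_1)\inv$ is an isometry $L^{\adelesigma}\isom L^{\adeletau\sprime}$ with $\lambda^{\adelesigma}\cdot\funcq{\phi}=\lambda^{\adeletau\sprime}$. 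Since $\GammaA$ is an elementary $2$-group, $(\det(\phi),\spin(\phi))=(d,s)$ in $\GammaQ$, and Proposition~\ref{prop:detspin} shows that $\phi$ multiplies positive sign structures by $\sign(ds)=\varepsilon\eta$, giving $\phi(\varepsilon\theta)=\eta\theta$.

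The main obstacle is coordinating the three pieces of information — the adelic ratio in $\GammaA$, the rational factor in $\GammaQ$, and the sign in $\Sign$ — through a single isometry $\phi$ that must simultaneously realize the discriminant-form equivalence and induce the prescribed map on positive sign structures. The coordination succeeds precisely because the $\psi_1$ furnished by strong approximation has trivial $(\det,\spin)$ and therefore acts trivially on positive sign structures, so that the sign-effect of $\phi=(\psi\psi_1)\inv$ is controlled entirely by $(d,s)$ and matches the $\Sign$-component imposed by the congruence; without this fine control, the third factor could not be isolated cleanly and the refinement of $\MMM$ could not be shown to classify $\TTTGGG/\simAG$.
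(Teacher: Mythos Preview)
Your proof is correct and follows essentially the same route as the paper's, relying on the identical ingredients: Proposition~\ref{prop:detspin} for the sign-structure bookkeeping, Proposition~\ref{prop:surjGammaQ} to realize the $\GammaQ$-factor by a rational isometry, and Theorem~\ref{thm:strongapproximation} to approximate the spin-kernel element by a rational isometry with trivial $\detspin$. The only differences are organizational: in the forward direction you use $\adelerho=\adelesigma\cdot\phi\cdot\adeletau\inv$ directly as the lift witnessing membership in $\Sigma(L,\AG^\lambda)$ (slightly cleaner than the paper, which introduces an auxiliary lift $\tilde{\adele{g}}$ and compares), and in the reverse direction you first absorb the $\AG$-contribution by passing to $\adeletau\sprime=\adelerho\cdot\adeletau$ before constructing $\phi$, whereas the paper builds $\phi=\xi\cdot\psi$ in one stroke with the $g$-term carried along; these are reparametrizations of the same argument.
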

\begin{proof}
The proof is parallel to that of Theorem~\ref{thm:originalMM} in~\cite[Chapter VIII]{MMB}.
Suppose that $(L^\adelesigma, \lambda^\adelesigma, \varepsilon)\simAG(L^\adeletau, \lambda^\adeletau, \eta)$ holds.
Then there exist an element $\agg\in \AG^\lambda$ and an isometry $\phi\colon L^\adelesigma \isom L^\adeletau$
of even $\Z$-lattices 
such that
the diagram
\begin{equation}\label{eq:diagram}
\begin{array}{ccc}
\Dq{L} & \xrightarrow{\agg} & \Dq{L} \\
\mapdownleft{\hskip -10pt \funcq{\adelesigma }|_L} & & \mapdownright{\funcq{\adeletau}|_L} \\
\Dq{L^\adelesigma } &\maprightsb{\funcq{\phi} }& \Dq{L^{\adeletau}}
\end{array}
\end{equation}
commutes, and that
\begin{equation}\label{eq:phiR}
\varepsilon\cdot \det(\phi\tensor\R)\cdot \spin( \phi\tensor\R) =\eta
\end{equation}
holds by Proposition~\ref{prop:detspin}.
(Note that we have $\spin(\phi\tensor\R) \in \R\sptimes/(\R\sptimes)^2=\{\pm 1\}$.)
We have an element $\tilde{\adele{\agg}}\in \OGAz (L)$ that induces $\agg$ on $\Dq{L}$
by the surjectivity of $\OGAz(L)\to \OGDq{L}$ (see Proposition~\ref{prop:adicsurjadele}).
Then the product $\tilde{\adele{\agg}}\cdot \adeletau\cdot\phi\inv\cdot \adelesigma \inv$ belongs to $\OGAz (L)$, and it induces an identity on $\Dq{L}$ by the commutativity of~\eqref{eq:diagram}.
Hence we have 
$$
(\,\det(\adelesigma \cdot\phi\cdot \adeletau\inv), \; \spin(\adelesigma \cdot\phi\cdot \adeletau\inv)\, )
\bmod \SigmaSharpA(L)\;\; =\;\; \Psi_{\A}(\agg).
$$
In particular, we have 
$$
(\det(\adelesigma ), \spin(\adelesigma ))\cdot (\det(\adeletau), \spin(\adeletau))\inv \cdot (\det(\phi), \spin(\phi))\in \Sigma(L, \AG^\lambda).
$$
Note that $ (\det(\phi), \spin(\phi))\in \GammaQ$, and 
that~\eqref{eq:phiR} implies that
$(\det(\phi), \spin(\phi), \varepsilon\inv \eta )$ belongs to $\Gamma^{\mathordsim}_{\Q}$.
Hence~\eqref{eq:thecond} holds.
\par
Conversely,
suppose that~\eqref{eq:thecond} holds.
By the definition of $\Sigma(L, \AG^\lambda)$ and the surjectivity of $\OG(L\tensor\Q)\to \GammaQ$
(see Proposition~\ref{prop:surjGammaQ}), 
we obtain $\tilde{\adele{\agg}}\in \OGAz(L)$ and $\xi\in \OG(L\tensor\Q)$ such that
\begin{enumerate}[(i)]
\item the automorphism $\agg$ of $\Dq{L}$ induced by $\tilde{\adele{\agg}}$ belongs to $\AG^\lambda$,
\item $(\det(\tilde{\adele{\agg}}), \spin(\tilde{\adele{\agg}}))\cdot (\det(\adeletau), \spin(\adeletau))=(\det(\adelesigma ), \spin(\adelesigma ))\cdot (\det(\xi), \spin(\xi))$ holds in $\GammaA$, and
\item $\varepsilon\cdot \det(\xi\tensor\R)\cdot\spin(\xi\tensor\R)=\eta$.
\end{enumerate}
We put
$$
\adelepsi\sprime:=\xi\inv\cdot \adelesigma \inv\cdot \tilde{\adele{\agg}}\cdot \adeletau \in \OGA(L).
$$
Note that we have $L^{\adelesigma \xi \adelepsi\sprime}=L^\adeletau$.
Since we have $(\det(\adelepsi\sprime), \spin(\adelepsi\sprime))=(1,1)$ by property (ii) above,
Theorem~\ref{thm:strongapproximation} implies 
that there exists an element $\psi\in \OG(L\tensor\Q)$ such that
$(\det(\psi), \spin(\psi))=(1, 1)$,
that $L^{\adelesigma \xi\psi}=L^\adeletau$,
and that the isomorphism from $\Dq{L^{\adelesigma \xi}}$ to $\Dq{L^\adeletau}$ induced by $\psi$ is equal to 
$$
\funcq{\adelepsi\sprime}|_{L^{\adelesigma \xi}}=(\funcq{\xi}|_{L^{\adelesigma }})\inv \cdot (\funcq{\adelesigma }|_{L})\inv \cdot g \cdot (\funcq{\adeletau}|_{L}).
$$
We put 
$$
\phi:=\xi\cdot \psi,
$$
which belongs to $\OG(L\tensor \Q)$.
Then the diagram~\eqref{eq:diagram}
commutes.
Moreover, 
since $(\det(\psi), \spin(\psi))=(1, 1)$,
we have $\varepsilon\cdot \det(\phi\tensor\R)\cdot\spin(\phi\tensor\R)=\eta$
by property (iii) above.
Therefore we obtain $(L^\adelesigma, \lambda^\adelesigma, \varepsilon)\simAG(L^\adeletau, \lambda^\adeletau, \eta)$.
\end{proof}
Therefore the set $\TTTGGG/\mathord{\simAG}$
can be equipped with a structure of abelian group:
\begin{equation}\label{eq:TTTGGG}
\TTTGGG/\mathord{\simAG}\;\;=\;\;(\GammaA\times \Sign)\;/ \;(\Gamma^{\mathordsim}_{\Q} \cdot (\Sigma (L, \AG^\lambda)\times \{1\})).
\end{equation}
Let $\PD=\{p_1, \dots, p_m\}$ denote the set of primes that divide
$$
d:=|D_{\GGG}|=|D_L|=|\rmdisc(L)|.
$$
We put
$$
\GammaD:=\prod_{p\in \PD} \Gamma_p,
\quad
\GammaAD:=\left(\prod_{p\notin \PD} \Gamma_{p, 0}\right)\; \times \;\GammaD.
$$
We show that the abelian group $\TTTGGG/\mathord{\simAG}$ is isomorphic to a quotient
of $\GammaD\times \Sign$,
and present a set of generators of the kernel $K$ of the quotient  homomorphism 
$\GammaD\times \Sign \to\TTTGGG/\mathord{\simAG}$.
Note that the finite abelian group  $\GammaD\times \Sign$ is  $2$-elementary,
and hence the computation below can be carried out by linear algebra over $\F_2$.
\begin{lemma}\label{lem:GammaAD}
We have $\GammaA\times\Sign =(\GammaAD\times \Sign)\cdot \GammaQtilde$.
\end{lemma}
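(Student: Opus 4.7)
The plan is to establish the nontrivial inclusion $\GammaA\times\Sign\subseteq(\GammaAD\times \Sign)\cdot \GammaQtilde$ by producing, for each $(\adele{\gamma},\varepsilon)\in \GammaA\times \Sign$, a rational element $r\in \GammaQ$ whose image $\tilde r\in \GammaQtilde$ absorbs the finitely many ``bad'' components of $\adele{\gamma}$ that lie outside $\prod_{p\notin \PD}\Gamma_{p,0}$. The opposite inclusion $(\GammaAD\times \Sign)\cdot \GammaQtilde\subseteq \GammaA\times \Sign$ is immediate, since $\GammaAD\subseteq \GammaA$ and $\GammaQtilde\subseteq \GammaA\times \Sign$ by definition. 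All the groups in sight are elementary abelian of exponent $2$, so inverses may be suppressed.

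My first step is to invoke the defining property of $\GammaA$: for almost every prime $p$, the component $\gamma_p$ lies in $\Gamma_{p,0}=\Det\times U_p$. Let $S$ denote the finite set of primes $p\notin \PD$ at which $\gamma_p\notin \Gamma_{p,0}$. Since $U_p$ is the image of $\Zp\sptimes$ in $\Qp\sptimes/(\Qp\sptimes)^2$, the spinor component of $\gamma_p$ has odd $p$-adic valuation for every $p\in S$.

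My second step is to choose the positive integer $n:=\prod_{p\in S}p$ (with the convention $n=1$ if $S=\emptyset$) and take $r:=(1,n)\in \GammaQ$. Unwinding the embedding $\GammaQ\inj \GammaA\times \Sign$ defined just before the lemma, its image $\tilde r$ has $\ell$-component $(1,n\bmod (\Q_\ell\sptimes)^2)\in \Gamma_\ell$ at each prime $\ell$, and $\Sign$-component $\mathrm{sign}(n)=+1$. I will then verify that $(\adele{\gamma},\varepsilon)\cdot \tilde r\in \GammaAD\times \Sign$ by a prime-by-prime inspection: if $\ell\in \PD$, nothing is required; if $\ell\notin \PD\cup S$, then $n$ is an $\ell$-adic unit, so $(1,n)_\ell\in \Gamma_{\ell,0}$ and the product stays in $\Gamma_{\ell,0}$; if $\ell=p\in S$, then the spinor components of both $\gamma_p$ and $(1,n)_p$ have odd $p$-adic valuation (the latter because $n/p$ is a $p$-adic unit), so their product lies in $U_p$. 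Because the $\Sign$-component of $\tilde r$ equals $+1$, the $\Sign$-coordinate of the product remains $\varepsilon$, giving the desired decomposition $(\adele{\gamma},\varepsilon)=\bigl((\adele{\gamma},\varepsilon)\cdot \tilde r\bigr)\cdot \tilde r$.

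The only mild obstacle I anticipate is the prime $p=2$, where $\Q_2\sptimes/(\Q_2\sptimes)^2$ has order $8$ rather than $4$; however, a direct inspection shows that multiplication by $2$ carries each non-unit class in $\{2,6,10,14\}$ into $U_2=\{1,3,5,7\}$, so the uniform formula for $r$ applies at all primes and no separate case analysis is needed.
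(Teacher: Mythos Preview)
Your proof is correct. The argument is essentially the same as the paper's, but made self-contained: the paper's proof is the one-line observation that $\GammaAz\subset\GammaAD$ together with the decomposition $\GammaA=\GammaAz\cdot\GammaQ$, which it cites from Miranda--Morrison (Lemma~4.1 in Chapter~VIII of~\cite{MMB}), whereas you reprove that decomposition directly by exhibiting the explicit rational correction $r=(1,\prod_{p\in S}p)$. Your version has the minor advantage of being independent of the external reference and of making transparent that the $\Sign$-factor plays no role (your chosen $r$ has $\Sign$-image $+1$); the paper's version is shorter and shows more clearly that the lemma is an immediate consequence of the standard adelic factorisation, with the $\Sign$-coordinate absorbed into the $\GammaAD\times\Sign$ factor regardless of which $r\in\GammaQ$ one uses.
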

\begin{proof}
This follows from $\GammaAz\subset\GammaAD$ and $\GammaA=\GammaAz\cdot \GammaQ$ (see Lemma 4.1 in~\cite[Chapter VIII]{MMB}).
\end{proof}
By this lemma,
we have an exact sequence
$$
0
\;\;\to\;\;
(\GammaAD\times \Sign)\cap \GammaQtilde
\;\;\to\;\;
\GammaAD\times \Sign
\;\;\to\;\;
(\GammaA\times\Sign)/\GammaQtilde
\;\;\to\;\;
0.
$$
By the definition~\eqref{eq:SigmaLG} of $\Sigma(L, \AG^\lambda)$
and Proposition~\ref{prop:sigmasharp}, 
we see that $\Sigma(L, \AG^\lambda)$ is contained in $\GammaAD$.
Hence the finite abelian group $\TTTGGG/\mathord{\simAG}$
is isomorphic to the cokernel of
$$
(\GammaAD\times \Sign)\cap \GammaQtilde 
\;\;\inj\;\;
\GammaAD\times \Sign
\;\;\surj\;\;
(\,\GammaAD\,/\,\Sigma(L, \AG^\lambda)\,)\times \Sign.
$$
Recall that $\AG^\lambda$ is a subgroup of
\begin{equation}\label{eq:OGpdecomp}
\OGDq{L}=\prod_{p \in \PD}\OGDq{L\tensor\Z_{p}}.
\end{equation}
Suppose that $\AG^\lambda$ is generated by $g_1, \dots, g_k$.
Let $p$ be a prime in $\PD$.
We denote by $g_i[p]$ the $p$-component of $g_i\in \AG^\lambda$ under the direct-sum decomposition~\eqref{eq:OGpdecomp}.
We then choose an isometry
$$
g_i[p]\lift \;\in\; \OG(L\tensor\Z_{p})
$$
that induces $g_i[p]$ on $\Dq{L\tensor\Z_{p}}$.
Then $\Psi_{p}(g_i) \in \Gamma_{p}/\SigmaSharp(L\tensor\Z_{p})$ 
is represented by $(\det(g_i[p]\lift ), \spin(g_i[p]\lift ))\in \Gamma_{p}$.
We put
$$
\gamma(g_i)\;\;:=\;\; \bigl(\; (\det(g_i[p]\lift ), \;\spin(g_i[p]\lift ))\; \mid\; p\in \PD \;\bigr)\;\;\in\;\; \GammaD.
$$
Remark that $\gamma(g_i)$ \emph{does} depend on the choice of the lifts $g_i[p]\lift$ of $g_i[p]$,
but that $\gamma(g_i)$ modulo
$\prod_{p\in \PD} \SigmaSharp(L\tensor\Z_{p})$ is uniquely determined by $g_i$.
By Proposition~\ref{prop:sigmasharp},
the projection $\GammaAD\to \GammaD$ induces an isomorphism from 
 $\GammaAD\,/\,\Sigma(L, \AG^\lambda)$ to the group 
$$
\GammaD\;\;/\;\; \gen{\;\SigmaSharp(L\tensor\Z_{p_1}), \dots, \SigmaSharp(L\tensor\Z_{p_m}), \; \gamma(g_1), \dots, \gamma(g_k)\; }.
$$
On the other hand, 
the group
$(\GammaAD\times \Sign)\cap \GammaQtilde $ is generated by the
following $2+|P(d)|$ elements of $\GammaQtilde\subset \Det \times (\Q\sptimes/(\Q\sptimes)^2) \times \Sign$:
$$
(-1, 1, -1),
\;\;
(1, -1, -1),\;\;
\textrm{and}
\;\;
(1, p_j, 1)\;\; \textrm{for $p_j\in\PD$}. 
$$
We put $S(d):=\shortset{p_1^{\nu_1} \cdots p_m^{\nu_m}}{ \nu_j=0 \textrm{\;or\,} 1\;\textrm{for}\; j=1, \dots, m}$.
The image of $(\varepsilon, \eta s, \varepsilon \eta)\in (\GammaAD\times \Sign)\cap \GammaQtilde$, where $\varepsilon\in\Det$, $\eta\in \Sign$, 
and $s\in S(d)$, by the projection $\GammaAD\times \Sign \to \GammaD\times\Sign$ is
$$
\beta(\varepsilon, \eta s, \varepsilon \eta):=\bigl(\;(\,\varepsilon, \,\eta s\bmod (\Q_{p_j}\sptimes)^2\,) \mid p_j \in \PD ), \;\; \varepsilon\eta\;\bigr). 
$$
Hence we obtain the following:
\begin{proposition}\label{prop:quotK}
The finite abelian group $\TTTGGG/\mathord{\simAG}$ is isomorphic to $(\GammaD\times\Sign)/K$,
where $K$ is generated by the following subgroups and elements of $\GammaD\times\Sign$:
\begin{enumerate}[{\rm (i)}]
\item
$\SigmaSharp(L\tensor \Z_{p_j})\times \{1\}$ for $p_j\in \PD$,
\item 
$(\gamma(g_i), 1)$, where $\AG^\lambda$ is generated by $g_1, \dots, g_k$, and
\item 
$\beta(-1,1,-1)$, $\beta(1, -1, -1)$, and 
$\beta(1, p_j, 1)$ for $p_j\in \PD$.
\end{enumerate} 
\end{proposition}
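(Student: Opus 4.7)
The plan is to combine Theorem~\ref{thm:refinedMM} with Lemma~\ref{lem:GammaAD} to descend the quotient presentation~\eqref{eq:TTTGGG} from the ad\`elic group $\GammaA\times\Sign$ to the finite group $\GammaD\times\Sign$, and then to identify explicit generators of the resulting denominator.

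First, Theorem~\ref{thm:refinedMM} gives the equality~\eqref{eq:TTTGGG}. Because $\Gamma^{\mathordsim}_{\Q}$ is a subgroup of the denominator and, by Lemma~\ref{lem:GammaAD}, $\GammaA\times\Sign=(\GammaAD\times\Sign)\cdot\Gamma^{\mathordsim}_{\Q}$, the inclusion $\GammaAD\times\Sign\hookrightarrow\GammaA\times\Sign$ is surjective modulo $\Gamma^{\mathordsim}_{\Q}$. Noting also that $\Sigma(L,\AG^\lambda)\subset\GammaAD$ (since $\SigmaSharpA(L)\subset\GammaAz\subset\GammaAD$ and the generators $\gamma(g_i)$ are supported in $\PD$), I would conclude that
\[
\TTTGGG/\mathord{\simAG}\;\cong\;(\GammaAD\times\Sign)\,/\,\bigl((\Sigma(L,\AG^\lambda)\times\{1\})\cdot((\GammaAD\times\Sign)\cap\Gamma^{\mathordsim}_{\Q})\bigr).
\]

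Next I would reduce from $\GammaAD$ to $\GammaD$ via the natural projection $\pi\colon\GammaAD\to\GammaD$ (which forgets the components at $p\notin\PD$). Its kernel is $\prod_{p\notin\PD}\Gamma_{p,0}$, and since $L\tensor\Zp$ is unimodular for $p\notin\PD$, Proposition~\ref{prop:sigmasharp}(2) gives $\SigmaSharp(L\tensor\Zp)=\Gamma_{p,0}$ for all such $p$. Thus $\Ker(\pi)\subset\SigmaSharpA(L)\subset\Sigma(L,\AG^\lambda)$, and $\pi\times\id_{\Sign}$ induces an isomorphism of the quotient above with $(\GammaD\times\Sign)/K$, where $K$ is the image of $(\Sigma(L,\AG^\lambda)\times\{1\})\cdot((\GammaAD\times\Sign)\cap\Gamma^{\mathordsim}_{\Q})$ in $\GammaD\times\Sign$.

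It remains to identify generators of $K$. The image of $\Sigma(L,\AG^\lambda)\times\{1\}$ under $\pi\times\id$ is, by the definition~\eqref{eq:SigmaLG} together with Proposition~\ref{prop:sigmasharp}, generated by the subgroups in (i) and by the classes $\Psi_{\A}(g_i)$; choosing lifts $g_i[p]\lift\in\OG(L\tensor\Zp)$ for each $p\in\PD$, these classes are represented exactly by the elements $(\gamma(g_i),1)$ in (ii). For the rational part, an element $(\varepsilon,\eta s,\varepsilon\eta)\in\Gamma^{\mathordsim}_{\Q}$ (where $\varepsilon\in\Det$, $\eta\in\Sign$, $s\in\Q\sptimes/(\Q\sptimes)^2$) lies in $\GammaAD\times\Sign$ iff its $p$-component is in $\Gamma_{p,0}$ for every $p\notin\PD$, i.e.\ iff $\ord_p(s)$ is even for every such $p$; modulo squares this forces $s\in S(d)$. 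Hence $(\GammaAD\times\Sign)\cap\Gamma^{\mathordsim}_{\Q}$ is generated by $(-1,1,-1)$, $(1,-1,-1)$, and $(1,p_j,1)$ for $p_j\in\PD$, and its image under $\pi\times\id$ is generated by the elements in (iii). Combining these three families yields the presentation of $K$ claimed in the proposition.

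The only delicate point I anticipate is the last bookkeeping step: checking that the rational elements intersecting $\GammaAD$ are precisely generated by the three listed $\beta$-classes, which hinges on the observation that local components at $p\notin\PD$ must be units modulo squares and that $\Gamma_\Q\to\prod_{p}\Qp\sptimes/(\Qp\sptimes)^2$ detects the $p$-adic valuation mod $2$. Everything else is a formal chase of the short exact sequences above.
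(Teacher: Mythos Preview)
Your proposal is correct and follows essentially the same route as the paper: the paper's argument is the discussion immediately preceding the proposition, which likewise passes from~\eqref{eq:TTTGGG} through Lemma~\ref{lem:GammaAD} to the exact sequence with $(\GammaAD\times\Sign)\cap\GammaQtilde$, projects to $\GammaD$ using Proposition~\ref{prop:sigmasharp}(2) to kill the kernel, and then reads off the same three families of generators. The only cosmetic difference is that your justification of $\Sigma(L,\AG^\lambda)\subset\GammaAD$ is phrased in terms of the generators $\gamma(g_i)$ being ``supported in $\PD$'', whereas the cleaner argument (implicit in the paper) is simply that every $\adelesigma\in\OGAz(L)$ has $p$-component in $\OG(L\otimes\Zp)$, and for $p\notin\PD$ unimodularity forces $(\det,\spin)(\sigma_p)\in\Gamma_{p,0}$; but the conclusion is the same.
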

The groups $\SigmaSharp(L\tensor\Zp)$ for $p\in \PD$ have been calculated in terms of 
$\rank(L\tensor\Zp)$, $\disc(L\tensor\Zp)$, 
and $ \Dq{L\tensor\Zp}$
in~\cite{MR0839800} and~\cite[Chapter VII]{MMB}.
The computation of $\beta(\varepsilon, \eta s, \varepsilon \eta)$  can be 
carried out by an elementary number theory.
Therefore,
in order to make an algorithm to calculate $\TTTGGG/\mathord{\simAG}$,
it is enough to write a sub-algorithm to calculate $\gamma(g)$ for an arbitrary element $g\in \OGDq{L}$.
For $p\in \PD$,
the $p$-part $g[p] \in \OGDq{L\tensor\Zp}$ of $g$ is easily calculated,
because $D_{L\tensor\Zp}$ is the $p$-part of the finite abelian group $D_L$.
An algorithm to find a lift $g[p]\lift \in \OG(L\tensor\Zp)$ and to calculate its value by $\detspin$
is presented in the next section.
\begin{remark}\label{rem:308}
Let $K\sprime$ denote the subgroup
of $\GammaD\times\Sign$ generated by the subgroups in (i) and 
the elements in  (iii) of Proposition~\ref{prop:quotK}.
If $\dim_{\F_2} (\GammaD\times\Sign)/K\sprime=0$,
then $\TTTGGG/\mathord{\simAG}$ is obviously trivial,
and we do not have to calculate $\Psi_p(g[p])$.
We have $\dim_{\F_2} (\GammaD\times\Sign)/K\sprime>0$ for $319$ algebraic equivalence classes
of connected components.
See Section~\ref{subsec:monodromy} for some cases where $K\ne K\sprime$.
\end{remark}
\begin{remark}\label{rem:complexconj}
The cokernel of the natural $\F_2$-linear homomorphism
$$
K\;\;\inj\;\; \GammaD\times\Sign \xrightarrow{\pr_1} \Gamma_d
$$
calculates the set of connected components modulo complex conjugation.
By this method,
we can show that the two connected components of the moduli
of each type $(\Phi, A)$ in Corollary~\ref{cor:twelve}
are complex conjugate to each other.
\end{remark}
\section{Computation of the homomorphism $\Psi_p$}\label{sec:Psip}
Throughout this section,
we fix a prime $p$,
a non-degenerate $p$-adic finite quadratic form
$\Dq{}$,
and an automorphism $g\in \OGDq{}$.
We assume the following:
\begin{assumption}\label{assump:L}
The finite quadratic form $\Dq{}$ is isomorphic to the discriminant form of an even $\Zp$-lattice $L$
of rank $r$ and discriminant $d$.
(By Proposition~\ref{prop:isomclassZp},
this even $\Zp$-lattice $L$ is unique up to isomorphism.)
\end{assumption}
Our goal is to construct an algorithm to calculate an element of 
$\Gamma_{p}$ that represents $\Psi_p(g)\in \Gamma_{p}/\SigmaSharp(L)$;
that is, an algorithm that finds an isometry $\tilg\in \OG(L)$ inducing $g$ on $\Dq{}$,
and then calculates $(\det(\tilg), \spin(\tilg))\in \Gamma_p$.
%
%
Let $\bp\colon D\times D\to \Q/\Z$ denote the bilinear form associated with $\Dq{}$.
We put
$$
\ell :=\leng(D),
$$
and suppose that
$$
D\;\;\cong\;\; \Z/p^{\nu_1}\Z \times \cdots \times \Z/p^{\nu_{\ell }}\Z.
$$
We fix 
generators $\basisDp_{1}, \dots, \basisDp_{\ell }$  of $D$ such that
$\basisDp_j$ generates the $j$th factor $\Z/p^{\nu_j}\Z$ of $D$.
We denote by $\Matl(R)$ the $R$-module of $\ell \times \ell $ matrices with components in $R$,
where $R$ is $\Z, \Q, \Zp, \Qp, \F_p$, or the localization $\Zpp$ of $\Z$ at the prime ideal $(p)$.
A matrix in $\Matl(R)$ is said to be \emph{even symmetric} if it is symmetric and its diagonal components 
are in $2R$.
We denote by $\ESMat(R)$ the submodule of $\Matl(R)$ 
consisting of even symmetric matrices.
Note that, if $A\in \ESMat(R)$ and $B\in \Matl(R)$, 
then we have $\TMTT{B}{A}\in \ESMat (R)$,
where $\transpose B$ is the transpose of $B$.
\par
Then the quadratic form $q$ on $D$ is expressed by 
$$
\Fq \bmod \ESMat(\Z), 
$$
where $\Fq$ is a symmetric matrix in $\Matl(\Q)$ whose $(i, j)$-component represents
$$
q(\basisDp_i) \in \Q/2\Z \;\;\;\;\textrm{if $i=j$},\qquad 
\bp(\basisDp_i, \basisDp_j)\in \Q/\Z \;\;\;\;\textrm{if $i\ne j$}.
$$
Since $q$ is non-degenerate, we have $\det \Fq\ne 0$.
\par
We denote by $\KerD (R)$ the submodule of $\Matl (R)$ consisting of matrices
whose components in the $j$th column are divisible by $p^{\nu_j}$.
The given automorphism $v\mapsto v^g$ of the finite abelian group $D$
is expressed by
$$
T_0\bmod \KerD(\Z),
$$
where $T_0$ is an element of $\Matl(\Z)$ whose $(i, j)$-components $t_{ij}$ satisfy
$$
\basisDp_{i}^g=\sum_{j=1}^{\ell } t_{ij}\basisDp_{j}.
$$
Since $g$ preserves $q$, we have 
\begin{equation}\label{eq:ToFqtT0}
\TMTT{T_0}{\Fq}\;\equiv\;\Fq\;\bmod \ESMat(\Z).
\end{equation}
Since $g\inv\in \OGDq{}$ exists,
there exists a matrix $T_0^{(-1)} \in \Matl(\Z)$
such that
$$
T_0\cdot T_0^{(-1)} \;\equiv\; I_{\ell}\;\bmod \KerD(\Z).
$$
In particular, the matrix $T_0\bmod p \in \Matl(\F_p)$ is invertible.
\par
Therefore,
the algorithm we are going to construct
is specified as follows:
\begin{itemize}
\item[{\bf Input}]
\begin{enumerate}[(1)]
\item A sequence $[p^{\nu_1}, \dots, p^{\nu_{\ell }}]$ that describes 
the order of each element in a minimal set of generators $\basisDp_1, \dots, \basisDp_{\ell }$ of $D$.
\item A symmetric matrix $\Fq\in \Matl(\Q)$ that represents $q$ with respect to 
$\basisDp_1, \dots, \basisDp_{\ell }$.
\item A matrix $T_{0}\in \Matl(\Z)$ that represents the automorphism $g\in \OG\Dq{}$ with respect to 
$\basisDp_1, \dots, \basisDp_{\ell }$.
\end{enumerate}
\item[{\bf Output}] An element $(\det(\tilg), \spin(\tilg))$ of $\Gamma_p$ that represents $\Psi_p(g)$.
\end{itemize}
\subsection{Step 1}\label{subsec:step1}
%
By the normal form theorem (Proposition~\ref{prop:normalformpfqf}) of non-degenerate $p$-adic 
finite quadratic forms,
there exists an algorithm to calculate an automorphism $v\mapsto v^h$ of $D$ 
represented by $H\bmod \KerD(\Z)$
such that
$\TMTT{H}{\Fq}$ is equivalent modulo $\ESMat(\Z)$ 
to a matrix $F\sprime\in \Matl(\Q)$ in normal form;
that is, $F\sprime$ is a block-diagonal matrix with 
diagonal components being matrices that appear
in Table~\ref{table:indecomppfqf}.
We replace the basis $\basisDp_1, \dots, \basisDp_{\ell }$ of $D$
with the new basis,
and assume that $\Fq$ is in normal form.
Accordingly, we replace the matrix $T_0 $ representing $g\in \OG\Dq{}$
by $H\, T_0\, H^{(-1)}$,
where $H^{(-1)}\in \Matl(\Z)$ is a matrix such that $H^{(-1)} \bmod \KerD(\Z)$ 
represents $h\inv$.
\subsection{Step 2}\label{subsec:step2}
%
We put
$$
\MLam:=\Fq\inv \in \Matl(\Q).
$$
Looking at Table~\ref{table:indecomppfqf},
we see that 
\begin{equation}\label{eq:MLam}
\MLam\in \ESMat(\Zpp) \quad{\rm and}\quad \MLam\equiv O \bmod p,
\end{equation}
where $O$ is the zero matrix.
Let $\Lam$ be an even $\Zp$-lattice of rank $\ell$ 
with a fixed basis $e_1, \dots, e_{\ell}$
whose Gram matrix is $\MLam$.
Then the discriminant form of $\Lam$ is isomorphic to $\Dq{}$.
Recall from Assumption~\ref{assump:L} that $L$ is an even $\Zp$-lattice of
rank $r$, discriminant $d$,
and 
with $\Dq{L}\cong \Dq{}$.
By the normal form theorem of even $\Zp$-lattices~(Proposition~\ref{prop:normalformZplat}),
we obtain the following.
%
%
\begin{itemize}
\item Suppose that $r>\ell $.
Then there exists an even \emph{unimodular} $\Zp$-lattice $\Lam_0$ such that
$L$ is isomorphic to the orthogonal direct-sum $\Lam_0\oplus \Lam$.
(In particular, if $p=2$,  we have $r\equiv \ell \bmod 2$.)
\item Suppose that $r=\ell $ and $p$ is odd.
Then $L$ is isomorphic to $\Lam$.
\item Suppose that $r=\ell $ and $p=2$.
Suppose that $L$ is not isomorphic to $\Lam$.
Then at least one of the matrices on the diagonal 
of $\Fq$ is of the form $[\varepsilon/2]$, where $\varepsilon \in \{1, 3\}$.
We replace one of such components with $[5\varepsilon/2]$,
and re-calculate $\MLam:=\Fq\inv$.
(This change does not affect the class of $\Fq$ modulo $\ESMat(\Z)$
and 
preserves the property~\eqref{eq:MLam}.)
Then $L$ is isomorphic to $\Lam$.
\end{itemize}
Thus we obtain an even $\Zp$-lattice $\Lam$ of rank $\ell $
with the following properties.
\begin{enumerate}[(i)]
\item\label{property:summand}
 The $\Zp$-lattice $\Lam$ is an orthogonal direct summand of $L$.
\item The Gram matrix $M$ of $\Lam$ with respect to a basis $\basisLam_1, \dots, \basisLam_{\ell }$
satisfies the property~\eqref{eq:MLam},
and $M\inv \bmod \ESMat(\Z)$ expresses $\Dq{}$.
More precisely,
let $\basisLamdual_1, \dots, \basisLamdual_{\ell }$ denote  the basis of  $\Lam\dual$
dual to $\basisLam_1, \dots, \basisLam_{\ell }$.
Then the homomorphism $\Lam\dual \to D$ given by $\basisLamdual_i\mapsto\basisDp_i$
induces an isomorphism $\Dq{\Lam}\isom \Dq{}$.
\end{enumerate}
We have a surjective homomorphism $\OG(\Lam)\to \OG\Dq{}$ by Proposition~\ref{prop:etasurjZp}.
By property~\eqref{property:summand} of $\Lam$,
every isometry $\tilde{h}_{\Lam}$ of $\Lam$ can be extended to
an isometry $\tilde{h}_{L}$ of $L$ by letting it act on the orthogonal complement $\Lam\sperp\subset L$ trivially.
Note that $\tilde{h}_{\Lam}$ and $\tilde{h}_{L}$
induce the same automorphism on $\Dq{}$,
and their $\detspin$-values are equal.
Therefore 
it suffices to find an element $\tilg_{\Lam}\in \OG(\Lam)$ 
that induces  the given automorphism $g$ of $\Dq{}$, and 
then to calculate $(\det(\tilg_{\Lam}), \spin(\tilg_{\Lam}))$.
\subsection{Step 3}\label{subsec:step3}
Our next task is to find a sequence
$\tilT_{\nu}$ ($\nu=0,1,\dots$)
of matrices in $\Matl(\Zpp)$ 
converging to a matrix $\tilT\in \Matl(\Zp)$
that represents with respect to the basis $\basisLam_1, \dots, \basisLam_{\ell }$ of $\Lam$
an isometry $\tilg\in \OG(\Lam)$ inducing $g\in \OG\Dq{}$.
\begin{lemma}\label{lem:YM}
We have
$$
\KerD(\Zp)=\set{Y\MLam}{Y\in \Matl(\Zp)}.
$$
\end{lemma}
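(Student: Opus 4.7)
The plan is to recast the claim as an equality of two $\Zp$-submodules of $\Matl(\Zp)$ and then reduce it to computing the row-span of $\MLam$ in $\Zp^{\ell}$. Let $J:=\diag(p^{\nu_1},\dots,p^{\nu_{\ell}})$. Then $\KerD(\Zp)=\Matl(\Zp)\cdot J$, since the identity $(YJ)_{ij}=p^{\nu_j}Y_{ij}$ shows that $\Matl(\Zp)\cdot J$ consists exactly of those matrices whose $(i,j)$-entry lies in $p^{\nu_j}\Zp$. Moreover, for any $A\in\Matl(\Zp)$, the set $\Matl(\Zp)\cdot A$ coincides with the collection of matrices each of whose rows is a $\Zp$-linear combination of the rows of $A$; hence $\Matl(\Zp)\cdot A=\Matl(\Zp)\cdot B$ if and only if the row-spans of $A$ and $B$ in $\Zp^{\ell}$ agree. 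Thus the lemma is equivalent to the equality
$$
\textrm{row-span}(\MLam)\;=\;\textrm{row-span}(J)\;=\;\bigoplus_{i=1}^{\ell}p^{\nu_i}\Zp.
$$

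To identify the row-span of $\MLam$ lattice-theoretically, I would use the dual pairing between $\Lam$ and $\Lam\dual$. The defining relations $\intf{\basisLam_i,\basisLam_j}=\MLam_{ij}$ together with $\intf{\basisLam_i,\basisLamdual_k}=\delta_{ik}$ yield $\basisLam_j=\sum_k \MLam_{kj}\,\basisLamdual_k$, so under the coordinate isomorphism $\Lam\dual\isom \Zp^{\ell}$ that sends $\basisLamdual_k\mapsto e_k$, the $j$-th column of $\MLam$ is precisely the image of $\basisLam_j$. Since $\MLam$ is symmetric, its row-span equals its column-span, which is therefore the image of the inclusion $\Lam\subset\Lam\dual$ under this coordinate isomorphism. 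By property (ii) of Step~2, the projection $\Lam\dual\surj D$ sends $\basisLamdual_k$ to $\basisDp_k$ and is an isomorphism of $\Lam\dual/\Lam$ onto $\bigoplus_k\Z/p^{\nu_k}\Z$; under the same identification $\Lam\dual=\Zp^{\ell}$ this becomes the standard coordinate surjection $\Zp^{\ell}\surj \bigoplus_k\Z/p^{\nu_k}\Z$, whose kernel is exactly $\bigoplus_k p^{\nu_k}\Zp$. Hence the column-span of $\MLam$ is $\bigoplus_k p^{\nu_k}\Zp$, as required.

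No real obstacle is anticipated: the whole argument is a bookkeeping translation between the matrix definition of $\KerD(\Zp)$ and the lattice-theoretic description $D=\Lam\dual/\Lam$ that has been set up in Step~2. The only point that requires a moment's care is that the row-span and column-span of $\MLam$ coincide, which uses its symmetry, and that the chosen isomorphism $\basisLamdual_k\mapsto\basisDp_k$ matches the direct-sum decomposition of $D$ in the specific way needed to identify the kernel with the coordinate submodule $\bigoplus_k p^{\nu_k}\Zp$.
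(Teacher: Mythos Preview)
Your proof is correct and takes essentially the same approach as the paper: both identify the row-span of $\MLam$ with the image of the inclusion $\Lam\hookrightarrow\Lam\dual$ under the coordinate isomorphism $\Lam\dual\cong\Zp^{\ell}$ given by $\basisLamdual_k\mapsto e_k$, and then use that $\Lam\dual/\Lam\cong D=\bigoplus_k\Z/p^{\nu_k}\Z$ via $\basisLamdual_k\mapsto\basisDp_k$ to conclude that this image is exactly $\bigoplus_k p^{\nu_k}\Zp$. Your reformulation via $J=\diag(p^{\nu_1},\dots,p^{\nu_{\ell}})$ and the reduction to a row-span comparison is a slightly cleaner packaging of the same argument the paper gives row-by-row.
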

\begin{proof}
Let $v=(p^{\nu_1} a_1, \dots, p^{\nu_{\ell }} a_{\ell })$
be a row vector of a matrix belonging to $\KerD(\Zp)$.
Since $p^{\nu_i}\basisDp_i=0$ 
holds for $i=1,\dots, \ell$, the mapping $x\mapsto v \Fq \transpose x$
expresses the homomorphism $D\to \Qp/\Zp$ given by $x\mapsto \bp(0, x)$.
Therefore $v\Fq=v\MLam\inv$ has components in $\Zp$.
Conversely, note that the $i$-th row vector $(m_{i1}, \dots, m_{i\ell })$ of $\MLam$
is a vector representation of $\basisLam_i\in \Lam$
with respect to the dual basis $\basisLamdual_{1}, \dots, \basisLamdual_{\ell }$ of $\Lam\dual$.
Therefore $m_{i1}\basisDp_1+ \dots + m_{i \ell } \basisDp_{\ell }=0$ holds in $D$,
and hence $m_{ij}\basisDp_j=0$ holds for $j=1, \dots, \ell$.
In particular, we have $M\in \KerD(\Zp)$.
\end{proof}
The Gram matrix of 
$\Lam\dual$
with respect to the basis $\basisLamdual_1, \dots, \basisLamdual_{\ell }$ is
$\MLam\inv=\Fq$.
Recall that $T_0\in \Matl(\Z)$ is a matrix such that $T_0\bmod \KerD(\Z)$ represents 
 $g\in \OG\Dq{}$.
\begin{lemma}\label{lem:Tequiv}
For $\tilT\in \Matl(\Qp)$,
the following conditions are equivalent.
\begin{enumerate}[{\rm (i)}]
\item The matrix $\tilT$ represents 
with respect to the basis $\basisLam_1, \dots, \basisLam_{\ell }$ of $\Lam$
an isometry $\tilg\in \OG(\Lam)$ that induces $g\in \OG\Dq{}$.
\item
We put
$$
T:=\MLam\inv\tilT \MLam.
$$
Then $(T-T_0)\MLam\inv\in \Matl(\Zp)$ and $\TMTT{T}{\MLamdual}=\MLamdual$ hold.
\end{enumerate}
\end{lemma}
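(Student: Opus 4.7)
The plan is to interpret both conditions as matrix encodings of a single isometry, expressed relative to two different bases. The key geometric observation is that if $\tilT\in \Matl(\Qp)$ represents a linear endomorphism $\tilg$ of $\Lam\tensor\Qp$ with respect to the basis $\basisLam_1,\dots,\basisLam_\ell$ of $\Lam$, then the conjugate $T=\MLam\inv\tilT\MLam$ represents the \emph{same} $\tilg$ with respect to the dual basis $\basisLamdual_1,\dots,\basisLamdual_\ell$ of $\Lam\dual$.  This is nothing but the change-of-basis formula applied to $\basisLam_i=\sum_j \MLam_{ij}\basisLamdual_j$, which expresses $\Lam\subset\Lam\dual$ in the dual basis.

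With this identification I would verify the two clauses of (ii) separately.  For the isometry part, the condition that $\tilg$ preserves the form on $\Lam\tensor\Qp$ is $\tilT\MLam\transpose{\tilT}=\MLam$; substituting $\tilT=\MLam T\MLam\inv$ and using $\transpose{\MLam}=\MLam$, this rearranges directly to $T\MLam\inv\transpose{T}=\MLam\inv$, i.e.\ $\TMTT{T}{\MLamdual}=\MLamdual$.

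For the induction of $g$ on $D$: since $T$ is the matrix of $\tilg$ on $\Lam\dual$ in the basis $\basisLamdual_1,\dots,\basisLamdual_\ell$, the induced automorphism of $D=\Lam\dual/\Lam$ is represented, with respect to $\basisDp_1,\dots,\basisDp_\ell$, by $T$ modulo the submodule of matrices whose rows give coordinate vectors of elements of $\Lam$ in the $\basisLamdual$-basis.  The relation $\basisLam_i=\sum_j\MLam_{ij}\basisLamdual_j$ identifies that submodule with $\{Y\MLam\mid Y\in \Matl(\Zp)\}$, which by Lemma~\ref{lem:YM} coincides with $\KerD(\Zp)$.  Hence $\tilg$ induces $g$ iff $T-T_0\in \KerD(\Zp)$, which by the same lemma is equivalent to $(T-T_0)\MLam\inv\in \Matl(\Zp)$.

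The one delicate point---and the step I expect to require the most care---is to argue that condition (ii) forces $\tilT\in \Matl(\Zp)$, not merely $\tilT\in \Matl(\Qp)$, so that $\tilg$ is an isometry of $\Lam$ itself rather than only of $\Lam\tensor\Qp$.  Under (ii) we have $T=T_0+Y\MLam$ with $T_0\in \Matl(\Z)$ and $Y\in \Matl(\Zp)$, hence $T\in \Matl(\Zp)$; thus $\tilg$ preserves the $\Zp$-lattice $\Lam\dual$.  Since $\Lam=(\Lam\dual)\dual$ inside $\Lam\tensor\Qp$ for any non-degenerate even $\Zp$-lattice, any isometry of $\Lam\tensor\Qp$ preserving $\Lam\dual$ automatically preserves $\Lam$, giving $\tilT\in \Matl(\Zp)$ and $\tilg\in \OG(\Lam)$.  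Reading the three equivalences in reverse yields the easier implication (i)$\Rightarrow$(ii).
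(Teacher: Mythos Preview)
Your argument is correct and the overall strategy matches the paper's: interpret $T=\MLam\inv\tilT\MLam$ as the matrix of the same endomorphism in the dual basis, then read off the isometry condition and the congruence $T\equiv T_0\bmod\KerD(\Zp)$ via Lemma~\ref{lem:YM}.

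The one place you genuinely diverge from the paper is the integrality step in (ii)$\Rightarrow$(i). The paper shows $\tilT\in\Matl(\Zp)$ by a direct matrix identity: from $E_0:=\TMTT{T_0}{\MLamdual}-\MLamdual\in\ESMat(\Zp)$ (the relation~\eqref{eq:ToFqtT0}) it rewrites $\MLam T_0\MLam\inv=(I_\ell+\MLam E_0)\,\transpose T_0\inv$, which is visibly $\Zp$-integral since $T_0\bmod p$ is invertible; then $\tilT=\MLam T_0\MLam\inv+\MLam Y$. Your route is more conceptual: from $T\in\Matl(\Zp)$ you conclude that $\tilg$ stabilises $\Lam\dual$, and then invoke $(\Lam\dual)\dual=\Lam$ to get $\tilg\in\OG(\Lam)$. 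This is clean and avoids the explicit computation; just note (implicitly you use it) that $\TMTT{T}{\MLamdual}=\MLamdual$ forces $\det T=\pm1$, so $T\in\GL_\ell(\Zp)$ and $\tilg$ really is an automorphism of $\Lam\dual$, not merely an endomorphism---which is what makes the double-dual argument go through. The paper's computation has the side benefit of introducing the quantity $E_0$ that is reused as the base case $\nu=0$ of the approximation scheme in Step~3, but for the lemma itself your argument is a perfectly good substitute.
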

\begin{proof} 
Suppose that $\tilT$ satisfies condition (i).
The isometry $\tilg$
induces an isometry of $\Lam\dual$,
and $T=\MLam\inv\tilT \MLam$ is the matrix representation 
of this isometry 
with respect to $\basisLamdual_1, \dots, \basisLamdual_{\ell }$.
Hence we have $\TMTT{T}{\MLamdual}=\MLamdual$.
Since $T$ induces $g$ on $D$
and the identification $D=\Lam\dual/\Lam$ is given by 
$\basisLamdual_i\mapsto\basisDp_i$,
we have
$$
T \equiv T_0 \bmod \KerD(\Zp).
$$
By Lemma~\ref{lem:YM},
we have $(T-T_0)\MLam\inv \in \Matl(\Zp)$.
\par
Conversely,
suppose that $\tilT$ satisfies condition (ii).
Then $\tilT=\MLam T\MLam\inv$ satisfies
$\TMTT{\tilT}{ \MLam}=\MLam$.
We show that $\tilT$ has components in $\Zp$.
As was seen above,
 $T_0\bmod p\in \Matl(\F_p)$ is invertible, and hence we have $\transpose T_0\inv \in \Matl(\Zp)$.
Since $g$ preserves $q$ and $\MLamdual$ is equal to $\Fq$, we see from~\eqref{eq:ToFqtT0} that
\begin{equation}\label{eq:E0}
E_0:=\TMTT{T_0}{\MLamdual}-\MLamdual \;\in\; \ESMat(\Zpp)\subset \ESMat(\Zp).
\end{equation}
In particular,
we have
$$
\MLam T_0 \MLam\inv=(I_{\ell } +\MLam E_0) \transpose T_0\inv \in \Matl(\Zp).
$$
By the assumption, 
we have $T=T_0+Y\MLam$
for some $Y\in \Matl(\Zp)$.
Therefore $\tilT=\MLam T_0 \MLam\inv+\MLam Y$
 has components in $\Zp$.
Hence $\tilT$ is a matrix representation of an isometry of $\Lam$.
Since $T=T_0+Y\MLam$,
 this isometry induces $g$ on $D=\Lam\dual/\Lam$.
\end{proof}
We denote by $\Matlp{p}$ the set of square matrices
of size $\ell $
whose components are in $\{0,\dots, p-1\}\subset\Z$.
By the surjectivity of $\OG(\Lam)\to\OG\Dq{}$ and Lemma~\ref{lem:Tequiv},
 there exists a sequence
 $Z_0, Z_1, Z_2, \dots$ of elements of $\Matlp{p}$
 such that the matrix 
 $$
 T:=T_0+YM, \;\; \textrm{where}\;\; Y:=Z_0+ pZ_1+p^2Z_2+\cdots \in \Matl(\Zp), 
 $$
 satisfies
 $$
 \TMTT{T}{\MLamdual}=\MLamdual.
 $$
 Let $Z_0, Z_1, Z_2, \dots$ be such a sequence.
 For $\nu>0$,
 we put
 \begin{eqnarray*}
 Y_{\nu-1} &:=& Z_0+ pZ_1+\cdots+p^{\nu-1} Z_{\nu-1}, \\
 T_{\nu} &:=& T_0+ Y_{\nu-1} \MLam.
 \end{eqnarray*}
Since $\MLam\equiv O \bmod p$, we have $T\equiv T_{\nu}\equiv T_0\bmod p$.
Then, for $\nu\ge 0$,
we have
\begin{equation}\label{eq:Tnu}
\TMTT{T_{\nu}}{\MLamdual}=\MLamdual+ p^{\nu} E_{\nu}\quad\textrm{for some $E_{\nu}\in \ESMat(\Zpp)$}.
\end{equation}
Indeed, 
since 
 $T_{\nu}=T+p^\nu\, WM$ for some $W\in \Matl(\Zp)$,
we have 
$$
\TMTT{T_{\nu}}{\MLamdual}-\MLamdual=p^{\nu}(W \transpose T_{\nu} +T_{\nu} \transpose W + p^\nu\; \TMTT{W}{M}).
$$
By~\eqref{eq:MLam}, we see that 
$$
E_{\nu}:=W \transpose T_{\nu} +T_{\nu} \transpose W + p^\nu\; \TMTT{W}{M}\;\;\in\;\; \ESMat(\Zp).
$$
Since $E_{\nu}=p^{-\nu}(\TMTT{T_{\nu}}{\MLamdual}-\MLamdual)$ has components in $\Q$,
we have $E_{\nu}\in \ESMat(\Zpp)$.
\par
We calculate such a  sequence $Z_{\nu}$ ($\nu=0, 1, \cdots$)
inductively on $\nu$.
Suppose that we have obtained $T_{\nu}\in \Matl(\Zpp)$ satisfying~\eqref{eq:Tnu}
and 
\begin{equation}\label{eq:TnuT0}
T_{\nu}\equiv T_0\bmod p.
\end{equation}
(By~\eqref{eq:E0}, 
 we can use the input data $T_0$ for $\nu=0$.) 
Our task is to  search for $Z_{\nu}\in \Matlp{p}$ such that
 $T_{\nu+1}:= T_{\nu} + p^{\nu} Z_{\nu} \MLam$ satisfies~\eqref{eq:Tnu} with $\nu$ replaced by $\nu+1$.
 Since
 $$
 \TMTT{T_{\nu+1}}{\MLamdual}-\MLamdual=
 p^\nu (E_{\nu} + Z_{\nu} \transpose T_{\nu} + T_{\nu}\transpose Z_{\nu} + p^{\nu}\; \TMTT{Z_{\nu}}{M}),
 $$
 it is enough to find a matrix $X\in \Matlp{p}$
 that satisfies 
\begin{equation}\label{eq:inDelta}
\frac{1}{p} (E_{\nu} + X \transpose T_{\nu} + T_{\nu}\transpose X) +p^{\nu-1}\, \TMTT{X}{\MLam} \; \in \; \ESMat(\Zpp).
\end{equation}
 \subsubsection{Suppose that $p>2$}
 Then every symmetric matrix in $\Matl(\Zpp)$ is even symmetric.
 Since $\MLam\equiv O\bmod p$, we see that $p^{\nu-1}\, \TMTT{X}{\MLam}$
 is a symmetric matrix in $\Matl(\Zpp)$ for any $X\in \Matlp{p}$
 even when $\nu=0$.
It is obvious that $E_{\nu} + X \transpose T_{\nu} + T_{\nu}\transpose X$ is symmetric for any $X\in \Matlp{p}$.
Therefore, combining 
this with~\eqref{eq:TnuT0}, we see that the condition~\eqref{eq:inDelta} is equivalent to the affine linear equation
\begin{equation}\label{eq:affeqpodd}
E_{\nu} + X \transpose T_{0} + T_{0}\transpose X\;\equiv\; O\;\;\bmod p
\end{equation}
over $\F_p$.
We solve~\eqref{eq:affeqpodd}
and lift a solution in $\Matl(\F_p)$ to $Z_{\nu}\in \Matlp{p}$.
\subsubsection{Suppose that $p=2$}
We put
\begin{eqnarray*}
h_{ii} &:=&\frac{1}{2} (\,\textrm{the $(i, i)$-component of $E_{\nu}$}\,) \;\; \bmod 2, \\
f_{ii}(X) &:=& \textrm{the $(i, i)$-component of $X\transpose T_0$} \;\; \bmod 2,
\end{eqnarray*}
for $i=1, \dots, \ell $.
Note that, since $E_{\nu}\in \ESMat(\Z_{(2)})$, the definition of $h_{ii}\in \F_2$ above
makes sense.
\par
Suppose that $\nu>0$.
Then we have 
$2^{\nu-1}\cdot \TMTT{X}{\MLam}\in \ESMat(\Z_{(2)})$ for any $X\in \Matlp{2}$.
Therefore, by~\eqref{eq:TnuT0}, 
we see that the condition~\eqref{eq:inDelta} is equivalent to the affine linear equation
\begin{equation}\label{eq:affeqp2nupos}
\begin{cases}
E_{\nu} + X \transpose T_{0} + T_{0}\transpose X\;\equiv\; O\;\;\bmod 2, \\
h_{ii}+f_{ii}(X) \;\equiv\; 0\;\;\bmod 2\;\;\; (i=1, \dots, \ell )
\end{cases}
\end{equation}
over $\F_2$.
We solve~\eqref{eq:affeqp2nupos}
and lift a solution in $\Matl(\F_2)$ to $Z_{\nu}\in \Matlp{2}$.
\par
Suppose that $\nu=0$.
Then $2\inv\,\TMTT{X}{M}$ is symmetric with components in $\Z_{(2)}$, 
but some of its diagonal components may fail to be even.
Hence we put
$$
g_{ii}(X):=\frac{1}{2} (\,\textrm{the $(i, i)$-component of $\TMTT{X}{M}$}\,)\;\; \bmod 2,
$$
which is a homogeneous quadratic polynomial over $\F_2$ of the components of $X=(x_{ij})$.
Note that, since $M\equiv O\bmod 2$,
the definition of $g_{ii}(X)$ makes sense.
Recall that $M=\Fq\inv$ is block-diagonal
with diagonal components
$$
W_{\mu, \varepsilon}:=\left[\frac{2^\mu}{\varepsilon}\right]\;\;\; (\varepsilon\in \{1,3,5,7\}),
\quad
U_{\mu}:=2^\mu \left[\begin{array}{cc} 0 & 1 \\ 1 & 0\end{array}\right],
\;\;\textrm{or}\;\;\;
V_{\mu}:=\frac{2^\mu}{3} \left[\begin{array}{cc} 2 & -1 \\ -1 & 2\end{array}\right],
$$
where $\mu>0$.
Note that the quadratic forms
$$
[x, y]\;U_{\mu}\left[\begin{array}{c} x \\ y\end{array}\right]=2^{\mu+1}xy,
\quad
[x, y]\;V_{\mu}\left[\begin{array}{c} x \\ y\end{array}\right]=\frac{2^{\mu+1}}{3}(x^2-xy+y^2)
$$
are always divisible by $4$ in $\Z_{(2)}$.
We put
$$
J:=\set{j}{\textrm{the $(j, j)$-component of $M$ is $2/\varepsilon_{j}$}}.
$$
Since $\varepsilon_j\equiv 1\bmod 2$,
the quadratic polynomial $g_{ii}(X)$ is of the form
$$
\sum_{j\in J} x_{ij}^2/\varepsilon_{j}=\sum_{j\in J} x_{ij}^2.
$$
Since $x^2=x$ in $\F_2$,
the equation $g_{ii}(X)=b$ over $\F_2$ 
with $b\in \F_2$ is equivalent to the affine linear equation $\bar{g}_{ii}(X)=b$,
where 
$$
\bar{g}_{ii}(X):=\sum_{j\in J} x_{ij}.
$$
Therefore, by~\eqref{eq:TnuT0}, we see that the condition~\eqref{eq:inDelta} is equivalent to the affine linear equation
\begin{equation}\label{eq:affeqp2nu0}
\begin{cases}
E_{\nu} + X \transpose T_{\nu} + T_{\nu}\transpose X\;\equiv\; O\;\;\bmod 2, \\
h_{ii}+f_{ii}(X)+\bar{g}_{ii}(X) \;\equiv\; 0\;\;\bmod 2\;\;\; (i=1, \dots, \ell )
\end{cases}
\end{equation}
over $\F_2$.
We solve~\eqref{eq:affeqp2nu0}
and lift a solution in $\Matl(\F_2)$ to $Z_{\nu}\in \Matlp{2}$.
\begin{remark}
The fact that the equations~\eqref{eq:affeqpodd} and~\eqref{eq:affeqp2nupos}
always have solutions in $\Matl(\F_p)$ is easily proved from $\det T_0\not \equiv 0 \bmod p$.
For example, when $p>2$,
the image of the linear map $\Matl(\F_p)\to \Matl(\F_p)$
given by $X\mapsto X\transpose{T_0} +T_0\transpose{X}$
 is equal to $\ESMat(\F_p)$.
The fact that the equation~\eqref{eq:affeqp2nu0}
is always soluble in $\Matl(\F_2)$ is non-trivial;
it is a consequence of the surjectivity of $\OG(\Lam)\to\OGDq{\Lam}$. 
\end{remark}
For an element $a\in \Qp\sptimes$, let $\ordp(a)$ denote the maximal integer $n$ such that $p^{-n} a\in \Zp$.
We put $\ordp(0):=\infty$.
For a matrix $M=(m_{ij})$ with components in $\Zp$,
we put
$$
\minordp(M):=\textrm{the minimum of $\ordp(m_{ij})$}.
$$
We define $\minordp(v)$ for a vector $v$ with components in $\Zp$
in the same way.
By the argument above, 
we have proved the following:
\begin{proposition}
For an arbitrarily large integer $\nu$,
we can calculate a matrix $T_{\nu}\in \Matl(\Zpp)$
such that there exists a matrix $T\in \Matl(\Zp)$
with the following properties:
\begin{enumerate}[{\rm (i)}]
\item $\minordp(T-T_{\nu})\ge \nu$,
\item $\MLam T \MLam\inv$ represents an isometry $\tilg$ of $\Lam$
with respect to $\basisLam_1, \dots, \basisLam_{\ell }$, and
\item $\tilg$ induces the given automorphism $g$ on $\Dq{}$.
\end{enumerate}
\end{proposition}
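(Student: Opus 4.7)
The plan is to carry out the inductive construction sketched in Section~\ref{subsec:step3}
and verify that it converges in the $p$-adic topology to a matrix with the required properties.
I would argue by induction on $\nu\ge 0$, the claim at stage $\nu$ being that there exists
$T_\nu\in \Matl(\Zpp)$ satisfying~\eqref{eq:Tnu}, \eqref{eq:TnuT0}, and
$\minordp(T_\nu-T_{\nu-1})\ge \nu-1$ (vacuous for $\nu=0$).
The base case $\nu=0$ is given by the input matrix $T_0$:
the congruence~\eqref{eq:TnuT0} is trivial, while~\eqref{eq:Tnu} is exactly~\eqref{eq:E0},
which follows from~\eqref{eq:ToFqtT0} together with the identification $\MLamdual=\Fq$.

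For the inductive step, suppose $T_\nu$ is given.
I would look for $Z_\nu\in \Matlp{p}$ so that $T_{\nu+1}:=T_\nu + p^\nu Z_\nu\MLam$ satisfies~\eqref{eq:Tnu}
with $\nu$ replaced by $\nu+1$; by the computation displayed just before~\eqref{eq:inDelta}, this
amounts to choosing $X=Z_\nu$ in $\Matlp{p}$ satisfying~\eqref{eq:inDelta}.
As explained in Section~\ref{subsec:step3},
condition~\eqref{eq:inDelta} reduces modulo $p$ (or modulo $2$) to the affine linear system
\eqref{eq:affeqpodd} when $p>2$, to~\eqref{eq:affeqp2nupos} when $p=2$ and $\nu>0$, and to~\eqref{eq:affeqp2nu0}
when $p=2$ and $\nu=0$, where in the last case one uses the linearization $x^2=x$ in $\F_2$ to replace
the quadratic contribution $g_{ii}(X)$ by the affine form $\bar g_{ii}(X)$.
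Once such a $Z_\nu$ is produced, one sets $T_{\nu+1}:=T_\nu+p^\nu Z_\nu\MLam$, which lies in $\Matl(\Zpp)$,
satisfies $T_{\nu+1}\equiv T_\nu\equiv T_0\bmod p$ (since $\MLam\equiv O\bmod p$ by~\eqref{eq:MLam}),
and obeys $\minordp(T_{\nu+1}-T_\nu)\ge\nu$, closing the induction.

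The main obstacle, and the only place where the argument is not pure linear algebra over $\F_p$, is the
solvability of the three affine linear systems above.
For~\eqref{eq:affeqpodd} and~\eqref{eq:affeqp2nupos} it follows from $\det T_0\not\equiv 0\bmod p$
together with the fact that $X\mapsto X\transpose{T_0}+T_0\transpose X$ surjects onto the symmetric
(respectively even-symmetric) matrices, so the obstruction vanishes identically.
For the harder system~\eqref{eq:affeqp2nu0}, where the diagonal congruences cannot be seen at the level of
symmetric matrices alone, solvability is precisely the content of the surjectivity of
$\OG(\Lam)\to \OGDq{\Lam}$, that is, Proposition~\ref{prop:etasurjZp}: any solution in $\Matl(\F_2)$
lifts to a matrix $Z_0\in \Matlp{2}$ witnessing a genuine lift of $g$ to an isometry modulo $4$.

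Finally, I would pass to the $p$-adic limit.
The sequence $\{T_\nu\}$ is Cauchy in $\Matl(\Zp)$ because $\minordp(T_{\nu+1}-T_\nu)\ge \nu$, so it
converges to some $T\in\Matl(\Zp)$ with $\minordp(T-T_\nu)\ge \nu$, giving~(i).
Taking the limit of~\eqref{eq:Tnu} yields $\TMTT{T}{\MLamdual}=\MLamdual$, and
$T\equiv T_0\bmod \KerD(\Zp)$ by Lemma~\ref{lem:YM} since each $T_\nu-T_0$ lies in
$\Matl(\Zpp)\cdot \MLam\subset \KerD(\Zpp)$.
Applying Lemma~\ref{lem:Tequiv} then shows that $\tilT:=\MLam T\MLam\inv$ represents an isometry
$\tilg\in\OG(\Lam)$ inducing $g$ on $\Dq{}$, proving~(ii) and~(iii).
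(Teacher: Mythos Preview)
Your proposal is correct and follows essentially the same route as the paper: the paper's proof is precisely the inductive construction of Step~3 (culminating in the sentence ``By the argument above, we have proved the following''), and you have faithfully reconstructed it, including the appeal to linear algebra for the solvability of~\eqref{eq:affeqpodd} and~\eqref{eq:affeqp2nupos} and to Proposition~\ref{prop:etasurjZp} for~\eqref{eq:affeqp2nu0}. One small expository point: your sentence about~\eqref{eq:affeqp2nu0} is phrased a bit backwards---the logic is that surjectivity produces an honest lift $T=T_0+YM$, whose $Y\bmod 2$ furnishes a solution $Z_0$ to the system, not that ``any solution lifts to a witness''; but the mathematical content you intend is right.
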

\subsection{Step 4}\label{subsec:step4}
Let $\Lam$ and $\tilg\in \OG(\Lam)$ be as in Step 3.
Let $\nu$ be a sufficiently large integer.
We put 
$$
V:=\Lam\tensor\Qp=\Lam\dual\tensor\Qp.
$$
In Step 3,
we have calculated a matrix
$$
\aT :=T_{\nu}\in \Matl(\Zpp)
$$
that is approximate to the matrix $T\in \Matl(\Zp)$
representing
 $\tilg\in \OG(V)$ with respect to the basis $\basisLamdual_1, \dots, \basisLamdual_{\ell }$ of $V$.
The approximate accuracy $\minordp(T-\aT)$ of $\aT$ satisfies
$$
\minordp(T-\aT)\ge \nu.
$$
In order to calculate $(\det(\tilg), \spin(\tilg))$,
we present an algorithm to decompose $\tilg$ into a product of reflections in $\OG(V)$
using only the computed matrix $\aT$.
This algorithm works when $\nu$ is sufficiently large.
\begin{remark}\label{rem:recal}
It is possible to state explicitly how large $\nu$ should be
for the algorithm to work.
However, the result would be complicated,
and, for most practical applications,
the theoretical bound seems to be unnecessarily large.
Therefore we present an algorithm of the style that,
if it fails to continue at some point because $\nu$ is not large enough,
it quits, goes back to Step 3,
re-calculate an approximate matrix $\aT$ with higher accuracy $\nu$,
and re-start the algorithm from the beginning.
If this algorithm reaches the end,
the result $(\det(\tilg), \spin(\tilg))$ is correct.
\end{remark}
Note that the Gram matrix $\MLam\inv$ of $V$
with respect to $\basisLamdual_1, \dots, \basisLamdual_{\ell }$
has components in $\Q$.
Hence we can find an orthogonal basis $f_1, \dots, f_{\ell }$ of $V$
by the Gram-Schmidt orthogonalization \emph{in $\Q$};
that is, we can calculate an invertible matrix $S\in \Matl(\Q)$
of basis transformation
such that the new Gram matrix 
$$
\MV:=\TMTT{S}{\MLam\inv}
$$
with respect to the new basis $f_1, \dots, f_{\ell }$
is diagonal.
We replace $T$ and $\aT$ by
$S\,T\,S\inv$
and 
$S\,\aT \,S\inv$,
respectively, 
so that $T$ represents $\tilg$ with respect to $f_1, \dots, f_{\ell }$.
The lower bound $\nu$ of the approximate accuracy $\minordp(T-\aT)$
is replaced by
$$
\nu +\min(0, \minordp(S))+\min(0, \minordp(S\inv)).
$$
(See Lemma~\ref{lem:AB} below.)
\par
To simplify the notation, 
we fix this orthogonal basis $f_1, \dots, f_{\ell }$ of $V$
in the rest of this section.
We identify vectors in $V$ with row vectors in $\Qp^\ell$, 
and linear transformations of $V$ with matrices in $\Matl(\Qp)$.
In particular,
if $A\in \Matl(\Qp)$
represents $a\in \OG(V)$,
we write $vA$ instead of $v^a$ for $v\in V$.
For $v\in \Qp^\ell$ and $A\in \Matl(\Qp)$,
we use the notation $\apprx{\,v}\in \Q^\ell$ and $\apprx{A\in \Matl(\Q)}$
to denote computed objects that we intend to be approximate values of $v$ and $A$,
respectively.
\par
For $k$ with $0\le k\le \ell$, 
let $\gen{f_1, \dots, f_k}$ denote the subspace of $V$ generated by $f_1, \dots, f_k$.
Then the orthogonal complement of $\gen{f_1, \dots, f_k}$ in $V$
is $\gen{f_{k+1}, \dots, f_{\ell}}$.
We put
$$
\gamma_i:=\ordp(\intf{f_i, f_i}), \quad \gamma:=\minordp(\MV)=\min(\gamma_1, \dots, \gamma_{\ell}).
$$
The following lemma is easy to prove, and will be used frequently.
\begin{lemma}\label{lem:AB}
{\rm (1)}
Let $A, B$ be matrices in $\Matl(\Qp)$,
and suppose that $\apprx{A}, \apprx{B}\in \Matl(\Q)$ satisfy
$$
\minordp(A-\apprx{A})\ge\alpha,\;\; \minordp(B-\apprx{B})\ge \beta.
$$
Then we have
$$
\minordp(AB-\apprx{A}\apprx{B})\ge  \min (\; \minordp(\apprx{A})+\beta, \; \alpha+\minordp(\apprx{B}), \; \alpha+\beta \;).
$$
{\rm (2)}
For any $u, v\in \Qp^\ell$, we have 
\begin{equation*}\label{eq:ordpuv}
\ordp(\intf{u, v})\;\ge\; \gamma+\minordp(u)+\minordp(v).
\end{equation*}
\end{lemma}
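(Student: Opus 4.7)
The plan is to reduce both parts to two elementary facts about $\ordp$: the non-Archimedean triangle inequality $\ordp(x+y) \ge \min(\ordp(x),\ordp(y))$, and the sub-multiplicativity $\ordp(xy) = \ordp(x) + \ordp(y)$. Both statements are of a completely routine computational flavor, and no deep input is required; the only thing to be careful about is to organize the decomposition of $AB - \apprx{A}\apprx{B}$ so that each summand is controlled by exactly one of the three bounds appearing on the right-hand side.

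For part (1), I would start from the telescoping identity
\[
AB - \apprx{A}\apprx{B} \;=\; (A-\apprx{A})\apprx{B} \;+\; \apprx{A}(B-\apprx{B}) \;+\; (A-\apprx{A})(B-\apprx{B}),
\]
which is obtained by writing $A = \apprx{A} + (A-\apprx{A})$ and $B = \apprx{B} + (B-\apprx{B})$ and expanding. Since each entry of a matrix product $XY$ is a $\Zp$-linear combination of products $x_{ik}y_{kj}$, we have $\minordp(XY) \ge \minordp(X) + \minordp(Y)$. Applying this to each of the three summands yields the three lower bounds $\minordp(\apprx{A}) + \beta$, $\alpha + \minordp(\apprx{B})$, and $\alpha + \beta$ respectively; the non-Archimedean triangle inequality for $\minordp$ of a sum of matrices then gives the minimum of the three bounds, as claimed.

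For part (2), I would simply expand $u$ and $v$ in the fixed orthogonal basis $f_1,\dots,f_\ell$, so that
\[
\intf{u,v} \;=\; \sum_{i=1}^{\ell} u_i v_i \,\intf{f_i,f_i}.
\]
Each term has $\ordp \ge \ordp(u_i) + \ordp(v_i) + \gamma_i \ge \minordp(u) + \minordp(v) + \gamma$, and the non-Archimedean triangle inequality applied to the sum preserves this lower bound.

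The only mild subtlety, and it is not really an obstacle, is that in part (1) the matrices $A, B, \apprx{A}, \apprx{B}$ a priori live in $\Matl(\Qp)$ rather than $\Matl(\Zp)$, so $\minordp$ can be negative; but the identity $\minordp(XY) \ge \minordp(X) + \minordp(Y)$ still holds, and this is all that the argument uses. Thus both parts should fall out of a short direct computation of a few lines each.
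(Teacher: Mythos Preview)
Your proposal is correct; the paper itself omits the proof entirely, saying only that the lemma ``is easy to prove, and will be used frequently.'' Your decomposition $AB-\apprx{A}\apprx{B}=(A-\apprx{A})\apprx{B}+\apprx{A}(B-\apprx{B})+(A-\apprx{A})(B-\apprx{B})$ and the diagonal expansion of $\intf{u,v}$ in the orthogonal basis $f_1,\dots,f_\ell$ are exactly the routine computations the author has in mind.
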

We also put
$$
\delta:=\begin{cases}
1 & \textrm{if $p=2$}, \\
0 & \textrm{if $p>2$}.
\end{cases}
$$
The following lemma is also easy to prove.
\begin{lemma}\label{lem:abinQp}
Let $a$ and $b$ be elements of the multiplicative group $\Qp\sptimes$.
\par
{\rm (1)}
If $\ordp(a)+\delta<\ordp(b-a)$,
then we have $\ordp(a+b)=\ordp(a)+\delta$.
\par
{\rm (2)}
 We have $a\equiv b \bmod (\Qp\sptimes)^2$ if 
$\ordp(1-a/b)\ge 1+2\delta$. 
\end{lemma}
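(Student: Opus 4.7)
The plan is to dispatch both assertions with elementary $p$-adic valuation arguments; the parameter $\delta$ exists precisely to absorb the single nuisance that $\ord_p(2)$ differs between the case $p=2$ and the case $p>2$.

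For part (1), I would write $a+b = 2a + (b-a)$. Since $\ord_p(2) = \delta$ by the definition of $\delta$, we have $\ord_p(2a) = \ord_p(a) + \delta$, and by hypothesis this is strictly less than $\ord_p(b-a)$. The strict form of the ultrametric inequality (when two summands have distinct $p$-adic valuations, the sum has the smaller of the two) then yields
$$
\ord_p(a+b) \;=\; \ord_p(2a) \;=\; \ord_p(a)+\delta,
$$
as desired.

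For part (2), the hypothesis $\ord_p(1 - a/b) \ge 1 + 2\delta \ge 1$ already forces $\ord_p(a/b) = 0$, so $a/b \in \Zp^{\times}$, and moreover $a/b \equiv 1 \bmod p^{1+2\delta}$. I would then invoke the standard description of squares in $\Zp^{\times}$: when $p$ is odd, any unit congruent to $1 \bmod p$ is a square (Hensel's lemma applied to $x^2 - a/b$ with the approximate root $x=1$), while when $p=2$, a unit is a square if and only if it is congruent to $1 \bmod 8$. In either case the congruence $a/b \equiv 1 \bmod p^{1+2\delta}$ places $a/b$ inside $(\Zp^{\times})^2 \subset (\Qp^{\times})^2$, which is exactly the claim $a \equiv b \bmod (\Qp^{\times})^2$.

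Neither part should present any real obstacle; both are routine facts about the structure of $\Qp^{\times}$. The only point requiring any care is the uniform treatment of $p=2$ versus $p>2$, but the very purpose of the parameter $\delta$ is to make such estimates uniform in $p$, so the argument can be written essentially without case distinction.
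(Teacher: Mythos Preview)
Your argument is correct. The paper itself does not give a proof of this lemma beyond the remark that it is ``also easy to prove,'' and the standard valuation-and-Hensel argument you outline is exactly the kind of routine verification the authors have in mind.
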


Our algorithm proceeds as follows.
We start from 
$$
T\spar{0}:=T,
\quad
\aT\spar{0}:=\aT,
\quad
\nu_0:=\nu,
\quad
i(0):=0.
$$
By the induction on $k$ up to $k=\ell$,
we compute a matrix $\aT\spar{k}\in \Matl(\Q)$,
an integer $\nu_k$,
and a sequence $\ar_j$
of vectors in $\Q^\ell$ 
for $j=i(k-1)+1, \dots, i(k)$ with the following properties:
\begin{enumerate}[{\rm (P1)}]
\item
For $j$ with $i(k-1)<j\le i(k)$,
we have $\sqnorm{\ar_j}\ne 0$.
In particular, we have 
the reflection $\tau(\ar_j)\in \OG(V)$.
\item 
The vector $\ar_j$ is approximate to a vector $r_j\in \gen{f_1, \dots, f_k} \subset \Qp^\ell$
with accuracy high enough to ensure that $\sqnorm{r_j}\ne 0$
and $\sqnorm{r_j}\equiv \sqnorm{\ar_j} \bmod (\Qp\sptimes)^2$ holds
in the multiplicative group $\Qp\sptimes$.
In particular, we have the reflection $\tau(r_j)\in \OG(V)$.
\item
The isometry
$$
T\spar{k}:=T\spar{0} \tau(r_1)\cdots \tau(r_{i(k)})=T\spar{k-1} \tau(r_{i(k-1)+1})\cdots \tau(r_{i(k)})
$$
preserves the subspace $\gen{f_1, \dots, f_k}$ of $V$,
and acts trivially on $\gen{f_1, \dots, f_k}$.
\item
The matrix 
$$
\aT\spar{k}:=\aT\spar{0} \tau(\ar_1)\cdots \tau(\ar_{i(k)})=\aT\spar{k-1} \tau(\ar_{i(k-1)+1})\cdots \tau(\ar_{i(k)}).
$$
is approximate to $T\spar{k}$, and 
we have 
$\minordp(T\spar{k}-\aT\spar{k})\ge \nu_k$.
\end{enumerate}
Suppose that we reach $k=\ell$.
Then $T^{(\ell)}$ is the identity matrix by property (3),
and hence we have
$$
T= \tau(r_{i(\ell)})\cdots \tau(r_{1}).
$$
Therefore we have $\det(T)=(-1)^{i(\ell)}$ and
$$
\spin(T)=Q(r_{i(\ell)})\cdots Q(r_{1})\bmod (\Qp\sptimes)^2
=Q(\ar_{i(\ell)})\cdots Q(\ar_{1}) \bmod (\Qp\sptimes)^2,
$$
where the second equality follows from property (2).
Since $\ar_{1}, \dots, \ar_{i(\ell)}$ 
are computed, $(\det(\tilg), \spin(\tilg))$ is also computed.
\par
Suppose that we have calculated $\aT\spar{k-1}$, $\nu_{k-1}$ and $\ar_{1}, \dots, \ar_{i(k-1)}$.
Recall that $\fk$ is the vector $(0, \dots, 1, \dots, 0)\in \Q^\ell$,
where $1$ is at the $k$th position.
We put
$$
\gk:=\fk\, T\spar{k-1}, \quad
\agk:=\fk\, \aT\spar{k-1}.
$$
By the induction hypothesis,
the isometry $T\spar{k-1}$  of $V$ preserves the subspace $\gen{f_1, \dots, f_{k-1}}$,
and hence preserves $\gen{f_1, \dots, f_{k-1}}\sperp=\gen{f_k, \dots, f_{\ell}}$.
Therefore we have $\gk\in \gen{f_k, \dots, f_{\ell}}$.
Since $T\spar{k-1}$ is an isometry,
we have 
\begin{equation}\label{eq:gkfk}
\sqnorm{\gk}=\sqnorm{\fk}, 
\end{equation}
and hence we have 
$\ordp(\sqnorm{\gk})=\ordp(\sqnorm{\fk})=\gamma_k$. 
We estimate the approximation error $\sqnorm{\gk}-\sqnorm{\agk}$.
For this purpose, we put
$$
\lambda_k:=\minordp(\agk),
\;\; 
\rho:=\min(\;\delta+\nnu+\gamma+\lambda_k, \; 2\nnu+\gamma\;).
$$
By property (4) for $\nnu$,
we have a matrix $A\in \Matl(\Zp)$
such that
$T\spar{k-1}=\aT\spar{k-1}+\pnnu A$.
Hence we have a vector $v\in \Zp^\ell$ such that
$$
\gk=\agk+\pnnu v.
$$
Therefore we have
$$
\sqnorm{\gk}-\sqnorm{\agk} =2\intf{\agk, \pnnu v}+\sqnorm{\,\pnnu v}.
$$
From $\ordp(\intf{\agk, \pnnu v})\ge \lambda_k+\gamma+\nnu$ and $\ordp(\sqnorm{\,\pnnu v})\ge 2\nnu+\gamma$,
we obtain 
$$
\ordp(\,\sqnorm{\gk}-\sqnorm{\agk}\,)\ge \rho.
$$
If $\rho\le \gamma_k+\delta$, then we quit and go to the re-calculation process (Remark~\ref{rem:recal}).
Suppose that $\rho>\gamma_k+\delta$.
Then, by Lemma~\ref{lem:abinQp},
we have
\begin{equation}\label{eq:gsum}
\ordp(\,\sqnorm{\gk}+\sqnorm{\agk}\,)=\gamma_k+\delta.
\end{equation}
We put
$$
b^+:=\fk+\gk, \;\; \ab^+:=\fk+\agk, \;\; b^-:=\fk-\gk, \;\; \ab^-:=\fk-\agk.
$$
We have
$$
\sqnorm{\ab^+}+\sqnorm{\ab^-}=2(\sqnorm{\fk}+\sqnorm{\agk}).
$$
By~\eqref{eq:gkfk} and~\eqref{eq:gsum}, 
we see that the $\ordp$ of at least one of $\sqnorm{\ab^+}$ or $\sqnorm{\ab^-}$ is $\le \gamma_k+2\delta$.
If $\ordp(\sqnorm{\ab^-})\le \gamma_k+2\delta$, we put $b:=b^-$ and $\ab:=\ab^-$;
otherwise, we put $b:=b^+$ and $\ab:=\ab^+$.
Note that we have $b\in \gen{\fk, \dots, f_{\ell}}$.
Then we have
$$
b=\ab+\pnnu w, 
$$
where $w=\pm v\in \Zp^{\ell}$.
In order to estimate $\sqnorm{b}-\sqnorm{\ab}$,
we put
$$
\sigma:=\min(\,\delta+\nnu+\gamma, \;\; \delta+\nnu+\gamma+\lambda_k, \;\;2\nnu+\gamma\,).
$$
Since 
$$
\sqnorm{b}-\sqnorm{\ab}=2\intf{\fk, \pnnu w}\pm 2\intf{\agk, \pnnu w}+\sqnorm{\,\pnnu w},
$$
we have 
$$
\ordp(\, \sqnorm{b}-\sqnorm{\ab}\,)\ge \sigma.
$$
We then put
$$
\kappa:=\sigma-( \gamma_k+2\delta).
$$
Since $\ordp(\sqnorm{\ab})\le \gamma_k+2\delta$,
we see that
$$
\sqnorm{b}=\sqnorm{\ab} (1+p^\kappa c)\;\;\textrm{for some}\;\; c\in \Zp.
$$
If $\kappa <1+2\delta$, then we quit and go to the re-calculation process (Remark~\ref{rem:recal}).
Suppose that
$\kappa\ge 1+2\delta$.
Then, by Lemma~\ref{lem:abinQp},
we have
$$
\sqnorm{b}\equiv \sqnorm{\ab} \bmod (\Qp\sptimes)^2.
$$
When $b=b^-$, we put
$$
i(k):=i(k-1)+1, \;\; 
r_{i(k-1)+1}:=b, \;\;
\ar_{i(k-1)+1}:=\ab, 
$$
so that
$$
T\spar{k}=T\spar{k-1} \tau(b), \;\; \aT\spar{k}=\aT\spar{k-1} \tau(\ab).
$$
When $b=b^+$, we put
$$
i(k):=i(k-1)+2, \;\;
r_{i(k-1)+1}:=b, \;\;
\ar_{i(k-1)+1}:=\ab, \;\;
r_{i(k-1)+2}:=\ar_{i(k-1)+2}:=\fk, 
$$
so that
$$
T\spar{k}=T\spar{k-1} \tau(b)\tau(\fk), \;\; \aT\spar{k}=\aT\spar{k-1} \tau(\ab)\tau(\fk).
$$
By construction, we have $\fk \,T\spar{k}=\fk$.
Using the induction hypothesis on $T\spar{k-1}$,
we can easily verify that $T\spar{k}$ preserves $\gen{f_1, \dots, f_k}$
and acts on $\gen{f_1, \dots, f_k}$ trivially.
Thus the constructed data satisfies the properties (P1), (P2), (P3). 
\par
It remains to give a lower bound $\nu_k$ of $\minordp(T\spar{k}-\aT\spar{k})$.
First we calculate $\minordp(\tau(b)-\tau(\ab))$.
For $x\in \Qp^\ell$, we have 
$$
x\cdot {(\tau(b)-\tau(\ab))}=\frac{2}{\sqnorm{b}} \phi(x),
$$
where
\begin{eqnarray*}
\phi(x)&=&(1+p^\kappa c)\,\intf{\ab, x}\,\ab-\intf{\ab+\pnnu w, x}\,(\ab+\pnnu w) \\
&=& p^\kappa c \,\intf{\ab, x}\,\ab -\intf{\,\pnnu w, x}\,\ab -\intf{\ab, x}\,\pnnu w-\intf{\,\pnnu w, x}\,\pnnu w.
\end{eqnarray*}
Since $\minordp(\fk)=0$,
we have $\minordp(\ab)\ge \bar{\lambda}_k:=\min (0, \lambda_k)$.
Hence, whenever $\minordp(x)\ge 0$, we have
$$
\minordp(\phi(x))\ge \theta:=\min(\; \kappa+2\bar{\lambda}_k+\gamma,\; \nnu+\gamma+\bar{\lambda}_k,\;2\nnu+\gamma\;).
$$
Combining this with $\ordp(\sqnorm{b})\le \gamma_k+2\delta$,
we see that
\begin{equation}\label{eq:difftau}
\minordp(\tau(b)-\tau(\ab))\ge \delta+\theta-(\gamma_k+2\delta)=\theta-\gamma_k-\delta.
\end{equation}
We put
$$
\lambda:=\minordp(\aT\spar{k-1}),
\quad
\alpha:=\minordp(\tau(\ab)),
\quad
\beta:=\minordp(\tau(\fk)),
$$
and
$$
\nu\sprime:=\min(\;\nnu +\alpha, \; \lambda+\theta-\gamma_k-\delta, \; \nnu+\theta-\gamma_k-\delta\; ).
$$
By Lemma~\ref{lem:AB} and~\eqref{eq:difftau}, we see that
$$
\minordp(\,T\spar{k-1}\tau(b)-\aT\spar{k-1} \tau(\ab)\,)\ge \nu\sprime.
$$
Therefore, in the case where $b=b^-$, we put $\nu_k:=\nu\sprime$.
In the case where $b=b^+$,
we have
$$
\minordp(\,T\spar{k-1}\tau(b)\tau(\fk)-\aT\spar{k-1} \tau(\ab)\tau(\fk)\,)\ge \nu\sprime+\beta, 
$$
and hence
we put $\nu_k:=\nu\sprime+\beta$.
\par
The values of $\gamma_k, \gamma$ and $\beta$
do not depend on the initial approximate accuracy $\nu_0=\nu$.
The values of $\lambda_k, \lambda$ and $\alpha$
stabilize to constants when $\nu$ goes to infinity.
Suppose that $\nnu/\nu$ converges to $1$
when $\nu$ goes to infinity.
By definitions,
we see that $\sigma/\nnu$, $\kappa/\nnu$ and $\theta/\nnu$
also converge to $1$,
and hence $\nu_k/\nu$ converges to $1$.
Therefore, if $\nu$ is large enough,
this algorithm reaches $k=\ell$.
\section{Examples}\label{sec:examples}
\subsection{Algebraically distinguished connected components}\label{subsec:algdist}
Let $(X, f, s)$ be an elliptic $K3$ surface.
We use the notation $A_f$, $U_f, \Lgen{\Phi_f}$,  and $\barLgen{\Phi_f}$
defined in Introduction.
Since we can perturb $(X, f, s)$ to an elliptic $K3$ surface $(X\sprime, f\sprime, s\sprime)$
in such a way that $A_{f\sprime}\cong A_{f}$ and $S_{X\sprime}\cong U_f\oplus \barLgen{\Phi_f}$, 
we see that, 
for each torsion section $\tau\in A_f$,
the class $[\tau]\in H^2(X, \Z)$
of the curve $\tau(\P^1)$ is contained in $U_f\oplus \barLgen{\Phi_f}$.
In this subsection,
we present a method to calculate these classes $[\tau]$.
\par
We denote by $\vf\in U_f$ the class of a fiber of $f$.
Let $P\in \P^1$ be a point such that $f\inv (P)$ is reducible.
Suppose that the reduced part of $f\inv (P)$ consists of $\rho+1$ smooth rational curves.
A smooth rational curve $\Theta$ in $f\inv(P)$
is said to be \emph{a simple component of $f\inv(P)$}
if the divisor $f\inv(P)$ of $X$ is reduced at a general point of $\Theta$.
If a section of $f$ intersects $f\inv(P)$
at a point of $\Theta$, 
then $\Theta$ is a simple component.
Let $\Theta_0$ be a simple component of $f\inv (P)$,
and let $\Theta_1, \dots, \Theta_{\rho}$
be the other smooth rational curves in $f\inv(P)$.
Let $\theta_{\nu}$ be the class of $\Theta_{\nu}$
for $\nu=0, \dots, \rho$.
Then $\theta_1, \dots, \theta_{\rho}$
span a root sublattice
$L(P)$ in $U_f\oplus L(\Phi_f)$,
and $\theta_1, \dots, \theta_{\rho}$ form a fundamental root system of $L(P)$.
Moreover, $\vf$ is orthogonal to $L(P)$, and $\theta_0\in \Z \vf \oplus L(P)$.
\begin{proposition}\label{prop:torsionsection}
Let $\{P_1, \dots, P_N\}$
be the set of points $P_i\in \P^1$
such that $f\inv (P_i)$ is reducible.
A vector $u\in U_f \oplus M(\Phi_f)$
is the class $[\tau]$ of a torsion section
$\tau\in A_f$ if and only if $u$ satisfies the following:
\begin{enumerate}[{\rm (i)}]
\item
$\intf{u, u}=-2$ and $\intf{u, \vf}=1$.
\item
For each $i=1, \dots, N$,
there exists a simple component $\Theta\spari_0$ of $f\inv (P_i)$
such that $\intf{u, \Theta\spari_0}=1$,  and that  $\intf{u, \Theta\spari_{\nu}}=0$ holds for all smooth rational 
curves $\Theta\spari_{\nu}$ in $f\inv (P_i)$ other than $\Theta\spari_0$.
\end{enumerate}
\end{proposition}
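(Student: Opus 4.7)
The plan is to prove the two directions separately. The \emph{only if} direction is essentially classical: for a torsion section $\tau\in A_f$, the curve $\tau(\P^1)$ is a smooth rational curve meeting each fiber transversally at a single point, which gives $\intf{u,u}=-2$ and $\intf{u,\vf}=1$. Because a section can only intersect a fiber at its smooth locus, the unique intersection point of $\tau$ with each reducible fiber $f\inv(P_i)$ lies on a single simple component, which plays the role of $\Theta^{(i)}_0$ in condition~(ii).

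For the \emph{if} direction, the idea is to produce an effective representative of $u$ and then to argue that its vertical part must vanish. Since $u^2=-2$, Riemann--Roch on the $K3$ surface $X$ gives $\chi(\mathcal{O}_X(u))=2+u^2/2=1$, so either $u$ or $-u$ is effective; the latter is excluded by $\intf{u,\vf}=1>0$, because any effective divisor pairs non-negatively with a general fiber. Writing an effective representative as $D=C+V$ with $C$ horizontal and $V$ vertical effective, the equation $\intf{D,\vf}=1$ forces $C$ to be a single irreducible section of $f$.

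It remains to prove that $V=0$. Let $\Theta^{(i)}_{*}$ denote the (necessarily simple) component that $C$ meets at $f\inv(P_i)$, and decompose $V=V_{\mathrm{irr}}+\sum_{i} V^{(i)}$ with $V^{(i)}$ supported on $f\inv(P_i)$. Condition~(ii) combined with $\intf{[C], \Theta^{(i)}_\nu}=\delta_{\nu,*}$ gives
\[
\intf{V^{(i)}, \Theta^{(i)}_\nu}=\delta_{\nu,0}-\delta_{\nu,*}\qquad\text{for each }\nu.
\]
The key claim is that $\Theta^{(i)}_{*}=\Theta^{(i)}_0$ for every $i$. Granting this, each $V^{(i)}$ is orthogonal to every component of $f\inv(P_i)$, so it lies in the radical $\Q\cdot F_i$ of the intersection form on the fiber; by effectivity and integrality (using that the multiplicities $m_\nu$ of fiber components have greatest common divisor one on a $K3$ surface), $V^{(i)}$ is a non-negative integer multiple of the fiber divisor, and similarly $V_{\mathrm{irr}}$ is a non-negative sum of irreducible fibers. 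Therefore $[V]=k\vf$ with $k\ge 0$, and then $u^2=[C]^2+2k=-2+2k=-2$ forces $k=0$. Being an effective divisor with zero class on a $K3$ surface, $V$ itself must vanish. Finally, $u=[C]\in U_f\oplus\barLgen{\Phi_f}$, so by the Shioda--Tate description of the Mordell--Weil group combined with the isomorphism $A_f\cong\barLgen{\Phi_f}/\Lgen{\Phi_f}$ of Proposition~\ref{prop:nnr}, the section $C$ represents a torsion element of the Mordell--Weil group, as required.

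The heart of the argument is the claim $\Theta^{(i)}_{*}=\Theta^{(i)}_0$, and this is the main obstacle. If it fails for some $i$, then the linear functional $y$ on the lattice $L_i^+$ spanned by the components of $f\inv(P_i)$ defined by $y(\Theta^{(i)}_\nu)=\delta_{\nu,0}-\delta_{\nu,*}$ descends to the quotient $L_i^+/\Z F_i$, since both components being simple gives $m_0-m_*=0$. Under the intersection pairing, $L_i^+/\Z F_i$ embeds in its dual with cokernel equal to the discriminant group, which is canonically identified with the component group of the N\'eron model of $f$ at $P_i$. In every Kodaira type, the simple components of $f\inv(P_i)$ correspond bijectively to elements of this component group; hence distinct $\Theta^{(i)}_0$ and $\Theta^{(i)}_{*}$ yield a non-zero class for $y$ in the discriminant group, so $y$ is not represented by any integer element of $L_i^+$, contradicting the existence of the effective integer divisor $V^{(i)}$.
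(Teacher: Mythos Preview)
Your proof is correct and follows essentially the same route as the paper's: produce an effective representative $C+V$ of $u$ via Riemann--Roch, show that the section $C$ must hit the distinguished component $\Theta^{(i)}_0$ at every reducible fiber, and then conclude $V=0$ from $u^2=-2$. The one substantive difference is in how the key claim $\Theta^{(i)}_*=\Theta^{(i)}_0$ is established: the paper isolates the needed fact as Lemma~\ref{lem:theta} (there is no $\mu>0$ with $m_\mu=1$ and $\theta_\mu^\vee\in L(P)$), verified by direct inspection of the Kodaira types, whereas you appeal to the standard bijection between simple fiber components and elements of the component group $L(P_i)^\vee/L(P_i)$. Since that bijection sends $\Theta_\mu$ to the class of $-\theta_\mu^\vee$ (with $\Theta_0\mapsto 0$), the two statements are equivalent; your phrasing is more conceptual but implicitly relies on the explicit form of the bijection, which is ultimately checked the same way.
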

For the proof, we need a preparation.
Let $P\in \P^1$, $L(P)$,
$\Theta_0, \dots, \Theta_{\rho}$, and $\theta_0, \dots, \theta_{\rho}$ be as above.
We have
$$
\theta_0=\vf-\sum_{i=1}^{\rho} m_{\nu} \theta_{\nu},
$$
where $m_{\nu}\in \Z_{>0}$ is the multiplicity of $\Theta_{\nu}$
in the divisor $f\inv (P)$.
The values of $m_{\nu}$ are classically known for all types of singular fibers of elliptic surfaces.
(See~\cite{MR0184257}. See also~\cite[Figure 1.8]{MR2977354}.)
The following lemma can be confirmed by explicit computation.
\begin{lemma}\label{lem:theta}
Let $\theta_1\dual, \dots, \theta_{\rho}\dual$ be the basis of $L(P)\dual$
dual to the fundamental root system $\theta_1, \dots, \theta_{\rho}$ of $L(P)$.
Then there exists no index $\mu>0$ such that $m_{\mu}=1$ and $\theta_{\mu}\dual\in L(P)$.
\end{lemma}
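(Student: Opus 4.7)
My plan is to reduce the statement to a classical fact about simply laced extended Dynkin diagrams. First I will translate the condition $\theta_\mu\dual\in L(P)$ into the equivalent statement that $\theta_\mu\dual$ has trivial image in the discriminant group $D_{L(P)}=L(P)\dual/L(P)$. By the classical theory of elliptic fibrations, $\{\theta_1,\dots,\theta_\rho\}$ is the fundamental root system obtained by deleting the vertex $\Theta_0$ from the affine (extended) Dynkin diagram of the Kodaira fiber at $P$, so $L(P)$ is a negative definite root lattice of some $ADE$ type $X_n\in\{A_n,D_n,E_6,E_7,E_8\}$. Under the standard sign convention, $-\theta_\mu\dual$ is then the $\mu$-th fundamental weight $\omega_\mu$, and $D_{L(P)}$ is canonically identified with the classical quotient $P/Q$ of the weight lattice by the root lattice; explicitly, $P/Q$ is $\Z/(n{+}1)$, $\Z/4$ or $(\Z/2)^2$, $\Z/3$, $\Z/2$, $0$ in the five respective cases.

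The core input is the standard fact, visible directly on the labelled diagrams of Kodaira~\cite{MR0184257} (or~\cite[Figure 1.8]{MR2977354}): in a simply laced extended Dynkin diagram with Kac multiplicities $m_0,\dots,m_\rho$, the vertices carrying the label $1$ are in canonical bijection with the elements of $P/Q$. Concretely, for $\mu\ge 1$ one has $m_\mu=1$ precisely when $\alpha_\mu$ appears with coefficient $1$ in the highest root, precisely when $\omega_\mu$ is a minuscule fundamental weight, and these minuscule weights together with $0$ exhaust $P/Q$.

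Now in our setup the simple component $\Theta_0$ (which has $m_0=1$) has already been removed to produce the finite type diagram, and so it accounts for the identity element of $P/Q$ under the bijection. Consequently, any remaining vertex $\Theta_\mu$ with $\mu>0$ and $m_\mu=1$ must map, under the above bijection, to a non-identity element of $P/Q$. This is precisely the statement $\omega_\mu\notin Q$, i.e.\ $\theta_\mu\dual\notin L(P)$, which is the desired conclusion.

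The main work, and the main obstacle such as it is, is the case-by-case verification of the bijection between label-$1$ vertices and $P/Q$. For $\tilde A_n$ all $n+1$ vertices carry label $1$ and $|P/Q|=n+1$; for $\tilde D_{n+4}$ the four extremal vertices carry label $1$ and $|P/Q|=4$; for $\tilde E_6,\tilde E_7,\tilde E_8$ one obtains $3,2,1$ label-$1$ vertices, matching $|P/Q|=3,2,1$. One could equivalently bypass all Lie-theoretic language and check directly, type by type, that for each $\mu$ with $m_\mu=1$ the $\mu$-th row of the inverse Cartan matrix of the resulting finite-type diagram contains a non-integer entry; either way the argument comes down to an inspection of a finite list.
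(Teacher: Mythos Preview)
Your proposal is correct. The paper itself offers no argument beyond the sentence ``The following lemma can be confirmed by explicit computation,'' i.e.\ it simply asserts that one may run through the $ADE$ types and check. Your write-up ultimately lands in the same place (you say as much in the final paragraph), but you add a genuine conceptual layer: identifying $L(P)\dual/L(P)$ with the weight/root quotient $P/Q$, recognising $-\theta_\mu\dual$ as the fundamental weight $\omega_\mu$, and invoking the standard fact that the label-$1$ vertices of a simply-laced affine diagram are in bijection with $P/Q$ via $\Theta_0\mapsto 0$ and $\Theta_\mu\mapsto[\omega_\mu]$ for $\mu>0$. This explains \emph{why} the numerics work out rather than merely asserting that they do, and it makes the type-by-type check essentially a count ($n{+}1$, $4$, $3$, $2$, $1$ label-$1$ vertices versus $|P/Q|$) rather than an inspection of inverse Cartan matrices. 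The trade-off is that you are importing a Lie-theoretic statement (minuscule weights represent the nonzero cosets of $P/Q$) which, while classical, is itself proved case by case; so the net logical content is the same, but your packaging is more illuminating.
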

\begin{proof}[Proof of Proposition~\ref{prop:torsionsection}]
The necessity of conditions (i) and (ii) is obvious.
Suppose that $u$ satisfies (i) and (ii).
By condition (i), we see that $u$ is the class of an effective divisor
$$
H+\sum_{i=0}^N \varGamma_i
$$
on $X$, 
where $H$ is a reduced curve mapped isomorphically to $\P^1$ by $f$,
and $\varGamma_i$ is an effective divisor whose support is contained in 
the support of $f\inv(P_i)$.
It is enough to show that $\varGamma_i=0$
for each $i$.
Indeed, if $u$ is the class of a section $H$,
then $H$ must be a torsion section because $u\in U_f\oplus M(\Phi_f)$.
\par
Let $\Theta\spari_{1}, \dots, \Theta\spari_{\rho(i)}$ be the smooth rational curves 
in $f\inv (P_i)$ other than the simple component $\Theta\spari_{0}$ given in condition (ii).
Since $H$ is a section,
there exists an index ${\mu(i)}$ with $0\le \mu(i)\le \rho(i)$ such that
$$
\intf{H, \Theta\spari_{\nu}}=
\begin{cases}
1 & \textrm{if $\nu=\mu(i)$, } \\
0 &\textrm{otherwise.}
\end{cases}
$$
It suffices to show that $\mu(i)=0$.
Indeed, suppose that $\mu(i)=0$.
Then we have $\intf{u, \Theta\spari_{\nu}}=\intf{H, \Theta\spari_{\nu}}=0$
for all $\nu>0$,
and hence $\intf{\varGamma_i, \Theta\spari_{\nu}}=0$ for all $\nu>0$.
Since the root lattice $L(P_i)$ spanned by the classes of $\Theta\spari_{1}, \dots, \Theta\spari_{\rho(i)}$
is non-degenerate, and $[\varGamma_i]\in \Z \vf\oplus L(P_i)$,
we see that $\varGamma_i$ is a multiple of the divisor $f\inv (P_i)$.
We put $\varGamma_i=k_i\, f\inv (P_i)$ with $k_i\in \Z_{\ge 0}$.
Then we have $u=[H]+k\,\vf$, where $k=\sum_{i=1}^N k_i$.
From $\intf{u,u}=\intf{H,H}=-2$ and $\intf{H, \vf}=1$, we obtain $k=0$.
\par
Now we prove $\mu(i)=0$.
Let $\theta_1\dual, \dots, \theta_{\rho(i)}\dual$
be the basis of $L(P_i)\dual$ dual to the basis 
$[\Theta\spari_{1}], \dots, [\Theta\spari_{\rho(i)}]$ of $L(P_i)$.
Suppose that $\mu(i)>0$.
By $\intf{u, \Theta\spari_{0}}=1$ and $\intf{H, \Theta\spari_{0}}=0$,
we have
\begin{equation}\label{eq:thetazero}
\intf{\varGamma_i, \Theta\spari_{0}}=1.
\end{equation}
By $\intf{u, \Theta\spari_{\mu(i)}}=0$ and $\intf{H, \Theta\spari_{\mu(i)}}=1$,
we have
\begin{equation}\label{eq:thetamu}
\intf{\varGamma_i, \Theta\spari_{\mu(i)}}=-1.
\end{equation}
If $\nu\ne 0$ and $\nu\ne \mu(i)$, then we have 
 $\intf{u, \Theta\spari_{\nu}}=\intf{H, \Theta\spari_{\nu}}=0$, and hence 
we have
\begin{equation}\label{eq:thetanu}
\intf{\varGamma_i, \Theta\spari_{\nu}}=0.
\end{equation}
Let $z\in L(P_i)$ be the image of $[\varGamma_i]\in \Z \vf\oplus L(P_i)$ by the projection
$\Z \vf\oplus L(P_i)\to L(P_i)$.
Then~\eqref{eq:thetamu} and~\eqref{eq:thetanu} imply that
$z$ is equal to $-\theta_{\mu(i)}\dual$.
In particular, $\theta_{\mu(i)}\dual$ is in $L(P_i)$.
On the other hand,~\eqref{eq:thetazero} implies that the coefficient $m_{\mu(i)}$
of $[\Theta\spari_{0}]=\vf-\sum_{\nu}m_{\nu}[\Theta\spari_{\nu}]$ is $1$,
which contradicts Lemma~\ref{lem:theta}.
\end{proof}
Let $v_s\in U_f$ denote the class of the zero section $s$.
It is easy to make the complete list of vectors $u_L$ of $U_f\oplus L(\Phi_f)\dual$
that satisfies condition (ii) in Proposition~\ref{prop:torsionsection}.
If $u_L\in L(\Phi_f)\dual$ satisfies condition (ii) in Proposition~\ref{prop:torsionsection}
and belongs to $U_f\oplus M(\Phi_f)$,
then
$$
-\frac{\intf{u_L, u_L}}{2} \vf +v_s+u_L
$$
is the class of a torsion section.
The classes of all torsion sections are obtained in this way.
Thus we can calculate the set $\shortset{[\tau]}{\tau\in A_f}$,
and see how the torsion sections intersect irreducible components of reducible fibers.
\par
We say that a torsion section $\tau\in A_f$ is \emph{narrow at $P\in \P^1$}
if $\tau$ and $s$ intersect the same irreducible component of $f\inv (P)$.
%
%
\begin{example}\label{example:extremal64}
We consider the extremal elliptic $K3$ surfaces $(X, f, s)$  of
type 
$$
(\,A_9+A_5+A_3+A_1, \;\Z/2\Z\,),
$$
which have two algebraically distinguished connected components that cannot be distinguished by the transcendental lattices.
(See no.\,64 of Table~I.)  
Let $P(A_l)\in \P^1$ denote the point such that $f\inv (P_i)$ is of type $A_l$.
The non-trivial torsion section of an elliptic $K3$ surface in one connected components
is not narrow at $P(A_9), P(A_3), P(A_1)$,
and narrow at $P(A_5)$,
whereas the non-trivial torsion section of an elliptic $K3$ surface in the other connected components
is not narrow at $P(A_9), P(A_5)$,
and narrow at $P(A_3), P(A_1)$.
\end{example}
\begin{example}\label{example:onextremal93}
We consider the non-extremal elliptic $K3$ surfaces of
type 
$$
(\,A_5+A_3+6A_1, \;\Z/2\Z\,),
$$
which have three algebraically distinguished connected components.
(See no.\,91 of Table~II.)
These connected components can be distinguished by the 
narrowness of the non-trivial torsion section as follows:
\newcommand{\narrow}{\mathord{\rm narrow}}
\newcommand{\notnarrow}{\mathord{\hbox{\rm not narrow}}}
$$
\begin{array}{lll}
A_5 & A_3 & A_1, A_1, A_1, A_1, A_1, A_1\\
\hline
\narrow & \notnarrow & \textrm{$\notnarrow$ at all $6$ points}\\
\notnarrow & \narrow & \textrm{$\narrow$ at only one point}\\
\notnarrow & \notnarrow & \textrm{$\narrow$ at exacly $3$ points}.\\
\end{array}
$$
\end{example}
\subsection{Connected components when $G$ is trivial}\label{subsec:monodromy}
%
%
\begin{example}
Consider the combinatorial type
$$
(\Phi, A)=(7A_2, \Z/3\Z).
$$
We have $|\CCCC(\Phi, A, \Aut(\Phi))|=1$.
The discriminant form of $L(\Phi)$
is isomorphic to
$(\F_3, [4/3])^7$, 
and we have
$$
\Aut( \Phi)=(\Z/2\Z)^7 \semidirectproduct \SSSS_7.
$$
The set $\EEE(\Phi, A)/\mathord{\Aut}( \Phi)$ consists 
of only one element $[M]$,
where $M$ corresponds to the totally isotropic subspace 
of dimension $1$ over $\F_3$ generated by
$(0,1,1,1,1,1,1)$.
Hence we have
$$
\Stab (M)=(\Z/2\Z) \times ((\Z/2\Z)^6 \semidirectproduct \SSSS_6).
$$
The genus $\GGG$ determined by the signature $(2, 4)$
and the discriminant form $(D_M, -q_M)$
consists of only one isomorphism class.
We have 
$|\CCCC(\Phi, A, \{\id\})|=2$,
and the two connected components in $\CCCC(\Phi, A, \{\id\})$
are complex conjugate to each other.
\end{example}
%
%
\begin{example}
For the combinatorial type
$$
(\Phi, A)=(4A_4, \{0\}),
$$ 
we have $|\CCCC(\Phi, A, \Aut(\Phi))|=1$,
whereas $|\CCCC(\Phi, A, \{\id\})|=2$,
and the two connected components in $\CCCC(\Phi, A, \{\id\})$
are real.
\end{example}
%
%
\begin{example}
For the combinatorial type
$(\Phi, A)=(2D_4+4A_2, \{0\})$, 
we have $|\CCCC(\Phi, A, \Aut(\Phi))|=1$,
whereas $|\CCCC(\Phi, A, \{\id\})|=4$,
and the four connected components in $\CCCC(\Phi, A, \{\id\})$
are divided into  two complex conjugate pairs.
\end{example}
\section{Tables}\label{sec:tables}
See Introduction for the explanation of the entries of the tables below.
%
\subsection{{\normalsize Table I: Non-connected moduli of extremal elliptic $K3$ surfaces}}\label{subsec:TableI}
{\footnotesize
$$
\begin{array}{ccccc}
 {\rm no.} & \Phi & A & T & [r, c] \\ 
\hline
1 & E_{8}+A_{9}+A_{1} & [ 1 ] & [ 2, 0, 10 ] & [ 2, 0 ] \\ 
\hline
2 & E_{8}+A_{6}+A_{3}+A_{1} & [ 1 ] & [ 6, 2, 10 ] & [ 0, 2 ] \\ 
\hline
3 & E_{8}+2A_{5} & [ 1 ] & [ 6, 0, 6 ] & [ 0, 2 ] \\ 
\hline
4 & E_{7}+E_{6}+A_{5} & [ 1 ] & [ 6, 0, 6 ] & [ 0, 2 ] \\ 
\hline
5 & E_{7}+D_{5}+A_{6} & [ 1 ] & [ 6, 2, 10 ] & [ 0, 2 ] \\ 
\hline
6 & E_{7}+A_{11} & [ 1 ] & [ 4, 0, 6 ] & [ 0, 2 ] \\ 
\hline
7 & E_{7}+A_{10}+A_{1} & [ 1 ] & [ 2, 0, 22 ] & [ 1, 0 ] \\ 
 & & &[ 6, 2, 8 ] & [ 0, 2 ] \\ 
\hline
8 & E_{7}+A_{8}+A_{2}+A_{1} & [ 1 ] & [ 6, 0, 18 ] & [ 1, 2 ] \\ 
\hline
9 & E_{7}+A_{7}+A_{4} & [ 1 ] & [ 6, 2, 14 ] & [ 0, 2 ] \\ 
\hline
10 & E_{7}+A_{7}+A_{3}+A_{1} & [ 2 ] & [ 4, 0, 8 ] & [ 0, 2 ] \\ 
\hline
11 & E_{7}+A_{6}+A_{5} & [ 1 ] & [ 4, 2, 22 ] & [ 0, 2 ] \\ 
\hline
12 & E_{7}+A_{6}+A_{4}+A_{1} & [ 1 ] & [ 2, 0, 70 ] & [ 1, 0 ] \\ 
 & & &[ 8, 2, 18 ] & [ 0, 2 ] \\ 
\hline
13 & E_{7}+A_{5}+A_{4}+A_{2} & [ 1 ] & [ 6, 0, 30 ] & [ 2, 0 ] \\ 
\hline
14 & E_{6}+D_{5}+A_{7} & [ 1 ] & [ 8, 0, 12 ] & [ 0, 2 ] \\ 
\hline
15 & E_{6}+A_{12} & [ 1 ] & [ 4, 1, 10 ] & [ 0, 2 ] \\ 
\hline
16 & E_{6}+A_{11}+A_{1} & [ 1 ] & [ 6, 0, 12 ] & [ 0, 2 ] \\ 
\hline
17 & E_{6}+A_{9}+A_{2}+A_{1} & [ 1 ] & [ 12, 6, 18 ] & [ 0, 2 ] \\ 
\hline
18 & E_{6}+A_{8}+A_{4} & [ 1 ] & [ 12, 3, 12 ] & [ 1, 2 ] \\ 
\hline
19 & E_{6}+A_{8}+A_{3}+A_{1} & [ 1 ] & [ 12, 0, 18 ] & [ 1, 2 ] \\ 
\hline
20 & E_{6}+A_{7}+A_{5} & [ 1 ] & [ 6, 0, 24 ] & [ 0, 2 ] \\ 
\hline
21 & E_{6}+A_{6}+A_{5}+A_{1} & [ 1 ] & [ 6, 0, 42 ] & [ 0, 2 ] \\ 
\hline
22 & E_{6}+A_{6}+A_{3}+A_{2}+A_{1} & [ 1 ] & [ 6, 0, 84 ] & [ 1, 0 ] \\ 
 & & &[ 12, 0, 42 ] & [ 1, 0 ] \\ 
\hline
23 & E_{6}+A_{5}+A_{4}+A_{3} & [ 1 ] & [ 12, 0, 30 ] & [ 2, 0 ] \\ 
\hline
24 & D_{11}+A_{6}+A_{1} & [ 1 ] & [ 6, 2, 10 ] & [ 0, 2 ] \\ 
\hline
25 & D_{9}+D_{5}+A_{4} & [ 1 ] & [ 4, 0, 20 ] & [ 2, 0 ] \\ 
\hline
26 & D_{7}+A_{6}+A_{3}+A_{2} & [ 1 ] & [ 8, 4, 44 ] & [ 0, 2 ] \\ 
\hline
27 & D_{6}+A_{9}+A_{2}+A_{1} & [ 2 ] & [ 4, 2, 16 ] & [ 1, 0 ] \\ 
\cline{4-5}
 & & &[ 6, 0, 10 ] & [ 1, 0 ] \\ 
\hline
28 & D_{6}+A_{7}+A_{4}+A_{1} & [ 2 ] & [ 6, 2, 14 ] & [ 0, 2 ] \\ 
\hline
29 & D_{6}+2A_{6} & [ 1 ] & [ 14, 0, 14 ] & [ 0, 2 ] \\ 
\hline
30 & D_{5}+A_{13} & [ 1 ] & [ 6, 2, 10 ] & [ 0, 2 ] \\ 
\hline
31 & D_{5}+A_{12}+A_{1} & [ 1 ] & [ 2, 0, 52 ] & [ 1, 0 ] \\ 
 & & &[ 6, 2, 18 ] & [ 0, 2 ] \\ 
\hline
32 & D_{5}+A_{10}+A_{2}+A_{1} & [ 1 ] & [ 14, 4, 20 ] & [ 0, 2 ] \\ 
\hline
33 & D_{5}+A_{9}+A_{4} & [ 1 ] & [ 10, 0, 20 ] & [ 1, 2 ] \\ 
\hline
34 & D_{5}+A_{9}+A_{3}+A_{1} & [ 2 ] & [ 8, 4, 12 ] & [ 0, 2 ] \\ 
\hline
 &&&& \llap{(continues)}
\end{array}
$$
\vfill
\eject
$$
\begin{array}{ccccc} &&&& \llap{(continued)} \\
   {\rm no.} & \Phi & A & T & [r, c] \\
   \hline
   35 & D_{5}+A_{8}+A_{5} & [ 1 ] & [ 12, 0, 18 ] & [ 1, 2 ] \\ 
\hline 
36 & D_{5}+A_{8}+A_{4}+A_{1} & [ 1 ] & [ 2, 0, 180 ] & [ 1, 0 ] \\ 
 & & &[ 18, 0, 20 ] & [ 1, 0 ] \\ 
\hline
37 & D_{5}+2A_{6}+A_{1} & [ 1 ] & [ 14, 0, 28 ] & [ 0, 2 ] \\ 
\hline
38 & D_{5}+A_{6}+A_{5}+A_{2} & [ 1 ] & [ 6, 0, 84 ] & [ 1, 0 ] \\ 
 & & &[ 12, 0, 42 ] & [ 1, 0 ] \\ 
\hline
39 & A_{17}+A_{1} & [ 1 ] & [ 4, 2, 10 ] & [ 0, 2 ] \\ 
\hline
40 & A_{16}+2A_{1} & [ 1 ] & [ 2, 0, 34 ] & [ 1, 0 ] \\ 
 & & &[ 4, 2, 18 ] & [ 1, 0 ] \\ 
\hline
41 & A_{15}+A_{2}+A_{1} & [ 1 ] & [ 10, 2, 10 ] & [ 0, 2 ] \\ 
\hline
42 & A_{14}+A_{4} & [ 1 ] & [ 10, 5, 10 ] & [ 0, 2 ] \\ 
\hline
43 & A_{14}+A_{3}+A_{1} & [ 1 ] & [ 10, 0, 12 ] & [ 0, 2 ] \\ 
\hline
44 & A_{14}+A_{2}+2A_{1} & [ 1 ] & [ 12, 6, 18 ] & [ 0, 2 ] \\ 
\hline
45 & A_{13}+A_{5} & [ 1 ] & [ 4, 2, 22 ] & [ 0, 2 ] \\ 
\hline
46 & A_{13}+A_{4}+A_{1} & [ 1 ] & [ 2, 0, 70 ] & [ 1, 0 ] \\ 
 & & &[ 8, 2, 18 ] & [ 0, 2 ] \\ 
\hline
47 & A_{13}+A_{3}+2A_{1} & [ 2 ] & [ 6, 2, 10 ] & [ 0, 2 ] \\ 
\hline
48 & A_{12}+A_{5}+A_{1} & [ 1 ] & [ 10, 2, 16 ] & [ 0, 2 ] \\ 
\hline
49 & A_{12}+A_{4}+2A_{1} & [ 1 ] & [ 2, 0, 130 ] & [ 1, 0 ] \\ 
 & & &[ 18, 8, 18 ] & [ 1, 0 ] \\ 
\hline
50 & A_{11}+A_{6}+A_{1} & [ 1 ] & [ 4, 0, 42 ] & [ 0, 2 ] \\ 
\hline
51 & A_{11}+A_{4}+A_{2}+A_{1} & [ 1 ] & [ 12, 0, 30 ] & [ 0, 4 ] \\ 
\hline
52 & A_{11}+A_{3}+A_{2}+2A_{1} & [ 2 ] & [ 12, 0, 12 ] & [ 0, 2 ] \\ 
\hline
53 & A_{10}+A_{7}+A_{1} & [ 1 ] & [ 2, 0, 88 ] & [ 1, 0 ] \\ 
 & & &[ 10, 2, 18 ] & [ 0, 2 ] \\ 
\hline
54 & A_{10}+A_{6}+A_{2} & [ 1 ] & [ 4, 1, 58 ] & [ 0, 2 ] \\ 
 & & &[ 16, 5, 16 ] & [ 1, 0 ] \\ 
\hline
55 & A_{10}+A_{6}+2A_{1} & [ 1 ] & [ 12, 2, 26 ] & [ 0, 2 ] \\ 
\hline
56 & A_{10}+A_{5}+A_{3} & [ 1 ] & [ 4, 0, 66 ] & [ 1, 0 ] \\ 
 & & &[ 12, 0, 22 ] & [ 1, 0 ] \\ 
\hline
57 & A_{10}+A_{5}+A_{2}+A_{1} & [ 1 ] & [ 6, 0, 66 ] & [ 1, 0 ] \\ 
 & & &[ 18, 6, 24 ] & [ 0, 2 ] \\ 
\hline
58 & A_{10}+A_{4}+A_{3}+A_{1} & [ 1 ] & [ 12, 4, 38 ] & [ 0, 2 ] \\ 
 & & &[ 20, 0, 22 ] & [ 1, 0 ] \\ 
\hline
59 & A_{10}+A_{4}+2A_{2} & [ 1 ] & [ 6, 3, 84 ] & [ 1, 0 ] \\ 
 & & &[ 24, 9, 24 ] & [ 1, 0 ] \\ 
\hline
60 & 2A_{9} & [ 1 ] & [ 10, 0, 10 ] & [ 2, 0 ] \\ 
\hline
61 & A_{9}+A_{8}+A_{1} & [ 1 ] & [ 10, 0, 18 ] & [ 2, 0 ] \\ 
\hline
62 & A_{9}+A_{6}+A_{2}+A_{1} & [ 1 ] & [ 10, 0, 42 ] & [ 2, 0 ] \\ 
\hline
63 & A_{9}+A_{5}+A_{4} & [ 1 ] & [ 10, 0, 30 ] & [ 1, 2 ] \\ 
\hline
64 & A_{9}+A_{5}+A_{3}+A_{1} & [ 2 ] & [ 10, 0, 12 ] & [ 1, 0 ] \\ 
\cline{4-5}
 & & &[ 10, 0, 12 ] & [ 1, 0 ] \\ 
\hline
65 & A_{9}+2A_{4}+A_{1} & [ 5 ] & [ 2, 0, 10 ] & [ 2, 0 ] \\ 
\hline
66 & A_{9}+A_{4}+A_{3}+2A_{1} & [ 2 ] & [ 10, 0, 20 ] & [ 1, 2 ] \\ 
\hline
67 & 2A_{8}+2A_{1} & [ 1 ] & [ 18, 0, 18 ] & [ 1, 2 ] \\ 
\hline
68 & A_{8}+A_{7}+A_{2}+A_{1} & [ 1 ] & [ 18, 0, 24 ] & [ 1, 2 ] \\ 
\hline
69 & A_{8}+A_{6}+A_{3}+A_{1} & [ 1 ] & [ 10, 4, 52 ] & [ 0, 2 ] \\ 
\hline
70 & A_{8}+A_{6}+A_{2}+2A_{1} & [ 1 ] & [ 18, 0, 42 ] & [ 1, 2 ] \\ 
\hline
71 & A_{8}+A_{5}+A_{4}+A_{1} & [ 1 ] & [ 18, 0, 30 ] & [ 1, 2 ] \\ 
\hline
72 & A_{8}+A_{5}+2A_{2}+A_{1} & [ 3 ] & [ 6, 0, 18 ] & [ 1, 2 ] \\ 
\hline
73 & A_{8}+A_{4}+A_{3}+A_{2}+A_{1} & [ 1 ] & [ 6, 0, 180 ] & [ 1, 2 ] \\ 
\hline
74 & 2A_{7}+2A_{2} & [ 1 ] & [ 24, 0, 24 ] & [ 0, 2 ] \\ 
\hline
75 & A_{7}+A_{6}+A_{5} & [ 1 ] & [ 16, 4, 22 ] & [ 0, 2 ] \\ 
\hline 
 &&&& \llap{(continues)}
\end{array}
$$
\vfill
\eject
$$
\begin{array}{ccccc} &&&& \llap{(continued)} \\
   {\rm no.} & \Phi & A & T & [r, c] \\\hline
76 & A_{7}+A_{6}+A_{4}+A_{1} & [ 1 ] & [ 2, 0, 280 ] & [ 1, 0 ] \\ 
 & & &[ 18, 4, 32 ] & [ 0, 2 ] \\ 
\hline
77 & A_{7}+A_{6}+A_{3}+A_{2} & [ 1 ] & [ 4, 0, 168 ] & [ 0, 2 ] \\ 
\hline
78 & A_{7}+A_{6}+A_{3}+2A_{1} & [ 2 ] & [ 12, 4, 20 ] & [ 0, 2 ] \\ 
\hline
79 & A_{7}+2A_{5}+A_{1} & [ 2 ] & [ 6, 0, 24 ] & [ 0, 2 ] \\ 
\hline
80 & A_{7}+A_{5}+A_{4}+A_{2} & [ 1 ] & [ 6, 0, 120 ] & [ 1, 0 ] \\ 
 & & &[ 24, 0, 30 ] & [ 1, 0 ] \\ 
\hline
81 & A_{7}+A_{5}+A_{3}+A_{2}+A_{1} & [ 2 ] & [ 12, 0, 24 ] & [ 2, 0 ] \\ 
\hline
82 & A_{7}+A_{4}+A_{3}+2A_{2} & [ 1 ] & [ 12, 0, 120 ] & [ 2, 0 ] \\ 
\hline
83 & 2A_{6}+A_{4}+A_{2} & [ 1 ] & [ 28, 7, 28 ] & [ 2, 0 ] \\ 
\hline
84 & 2A_{6}+2A_{3} & [ 1 ] & [ 28, 0, 28 ] & [ 0, 2 ] \\ 
\hline
85 & 2A_{6}+2A_{2}+2A_{1} & [ 1 ] & [ 42, 0, 42 ] & [ 2, 0 ] \\ 
\hline
86 & A_{6}+A_{5}+A_{4}+A_{2}+A_{1} & [ 1 ] & [ 18, 6, 72 ] & [ 0, 2 ] \\ 
 & & &[ 30, 0, 42 ] & [ 1, 0 ] \\ 
\hline
87 & A_{6}+2A_{4}+A_{3}+A_{1} & [ 1 ] & [ 10, 0, 140 ] & [ 1, 0 ] \\ 
 & & &[ 20, 0, 70 ] & [ 1, 0 ] \\ 
\hline
88 & 2A_{5}+2A_{4} & [ 1 ] & [ 30, 0, 30 ] & [ 2, 0 ] \\ 
\hline
89 & 2A_{5}+4A_{2} & [ 3, 3 ] & [ 6, 0, 6 ] & [ 0, 2 ] \\ 
\hline
\end{array}
$$
}
\subsection{\normalsize Table II: Non-connected moduli of non-extremal elliptic $K3$ surfaces}\label{subsec:TableII}
{\footnotesize
$$
\begin{array}{ccccc}
 {\rm no.} & r &\Phi & A & [c_1, \dots, c_k] \\ 
\hline
1 & 17 & E_{7}+D_{6}+A_{3}+A_{1} & [ 2 ] & [ 1, 1 ] \\
 \hline
2 & 17 & E_{7}+2A_{5} & [ 1 ] & [ 2 ] \\
 \hline
3 & 17 & E_{7}+A_{5}+A_{3}+2A_{1} & [ 2 ] & [ 1, 1 ] \\
 \hline
4 & 17 & E_{6}+A_{11} & [ 1 ] & [ 2 ] \\
 \hline
5 & 17 & E_{6}+A_{6}+A_{5} & [ 1 ] & [ 2 ] \\
 \hline
6 & 17 & E_{6}+2A_{5}+A_{1} & [ 1 ] & [ 2 ] \\
 \hline
7 & 17 & D_{12}+A_{3}+2A_{1} & [ 2 ] & [ 1, 1 ] \\
 \hline
8 & 17 & D_{10}+D_{6}+A_{1} & [ 2 ] & [ 1, 1 ] \\
 \hline
9 & 17 & D_{8}+A_{7}+2A_{1} & [ 2 ] & [ 1, 1 ] \\
 \hline
10 & 17 & D_{8}+A_{5}+A_{3}+A_{1} & [ 2 ] & [ 1, 1 ] \\
 \hline
11 & 17 & 2D_{6}+A_{3}+2A_{1} & [ 2, 2 ] & [ 1, 1 ] \\
 \hline
12 & 17 & D_{6}+D_{5}+A_{5}+A_{1} & [ 2 ] & [ 1, 1 ] \\
 \hline
13 & 17 & D_{6}+A_{9}+2A_{1} & [ 2 ] & [ 1, 1 ] \\
 \hline
14 & 17 & D_{6}+A_{7}+A_{3}+A_{1} & [ 2 ] & [ 1, 1 ] \\
 \hline
15 & 17 & D_{6}+A_{7}+A_{2}+2A_{1} & [ 2 ] & [ 1, 1 ] \\
 \hline
16 & 17 & D_{6}+A_{5}+A_{3}+A_{2}+A_{1} & [ 2 ] & [ 1, 1 ] \\
 \hline
17 & 17 & D_{6}+A_{5}+A_{3}+3A_{1} & [ 2, 2 ] & [ 1, 1 ] \\
 \hline
18 & 17 & D_{5}+2A_{6} & [ 1 ] & [ 2 ] \\
 \hline
19 & 17 & D_{4}+2A_{6}+A_{1} & [ 1 ] & [ 2 ] \\
 \hline
20 & 17 & A_{11}+A_{5}+A_{1} & [ 1 ] & [ 2 ] \\
 \hline
21 & 17 & A_{9}+A_{5}+3A_{1} & [ 2 ] & [ 1, 1 ] \\
 \hline
22 & 17 & A_{9}+A_{3}+A_{2}+3A_{1} & [ 2 ] & [ 1, 1 ] \\
 \hline
23 & 17 & A_{7}+2A_{5} & [ 1 ] & [ 2 ] \\
 \hline
24 & 17 & A_{7}+A_{5}+A_{3}+2A_{1} & [ 2 ] & [ 1, 1 ] \\
 \hline
25 & 17 & 2A_{6}+A_{3}+2A_{1} & [ 1 ] & [ 2 ] \\
 \hline
26 & 17 & A_{6}+2A_{5}+A_{1} & [ 1 ] & [ 2 ] \\
 \hline
27 & 17 & 2A_{5}+2A_{3}+A_{1} & [ 2 ] & [ 1, 1 ] \\
 \hline
28 & 17 & 2A_{5}+A_{3}+A_{2}+2A_{1} & [ 2 ] & [ 1, 1 ] \\
 \hline
29 & 16 & E_{7}+D_{6}+3A_{1} & [ 2 ] & [ 1, 1 ] \\
 \hline
30 & 16 & E_{7}+2A_{3}+3A_{1} & [ 2 ] & [ 1, 1 ] \\
 \hline 
 &&&& \llap{(continues)}
\end{array}
$$
\vfill
\eject
$$
\begin{array}{ccccc} &&&& \llap{(continued)} \\
  {\rm no.} & r & \Phi & A & [c_1, \dots, c_k] \\\hline
31 & 16 & E_{6}+2A_{5} & [ 1 ] & [ 2 ] \\
 \hline
32 & 16 & D_{10}+A_{3}+3A_{1} & [ 2 ] & [ 1, 1 ] \\
 \hline
33 & 16 & D_{8}+D_{6}+2A_{1} & [ 2 ] & [ 1, 1 ] \\
 \hline
34 & 16 & D_{8}+A_{5}+3A_{1} & [ 2 ] & [ 1, 1 ] \\
 \hline
35 & 16 & D_{8}+2A_{3}+2A_{1} & [ 2 ] & [ 1, 1, 1 ] \\
 \hline
36 & 16 & 2D_{6}+A_{3}+A_{1} & [ 2 ] & [ 1, 1 ] \\
 \hline
37 & 16 & 2D_{6}+4A_{1} & [ 2, 2 ] & [ 1, 1 ] \\
 \hline
38 & 16 & D_{6}+D_{5}+A_{3}+2A_{1} & [ 2 ] & [ 1, 1 ] \\
 \hline
39 & 16 & D_{6}+D_{4}+A_{5}+A_{1} & [ 2 ] & [ 1, 1 ] \\
 \hline
40 & 16 & D_{6}+A_{9}+A_{1} & [ 2 ] & [ 1, 1 ] \\
 \hline
41 & 16 & D_{6}+A_{7}+3A_{1} & [ 2 ] & [ 1, 1 ] \\
 \hline
42 & 16 & D_{6}+A_{5}+A_{3}+2A_{1} & [ 2 ] & [ 1, 1, 1 ] \\
 \hline
43 & 16 & D_{6}+A_{5}+A_{2}+3A_{1} & [ 2 ] & [ 1, 1 ] \\
 \hline
44 & 16 & D_{6}+3A_{3}+A_{1} & [ 2 ] & [ 1, 1 ] \\
 \hline
45 & 16 & D_{6}+2A_{3}+A_{2}+2A_{1} & [ 2 ] & [ 1, 1 ] \\
 \hline
46 & 16 & D_{6}+2A_{3}+4A_{1} & [ 2, 2 ] & [ 1, 1 ] \\
 \hline
47 & 16 & D_{5}+A_{5}+A_{3}+3A_{1} & [ 2 ] & [ 1, 1 ] \\
 \hline
48 & 16 & D_{4}+A_{7}+A_{3}+2A_{1} & [ 2 ] & [ 1, 1 ] \\
 \hline
49 & 16 & A_{11}+A_{3}+2A_{1} & [ 2 ] & [ 1, 1 ] \\
 \hline
50 & 16 & A_{9}+A_{3}+4A_{1} & [ 2 ] & [ 1, 1 ] \\
 \hline
51 & 16 & A_{7}+A_{5}+4A_{1} & [ 2 ] & [ 1, 1 ] \\
 \hline
52 & 16 & A_{7}+A_{3}+A_{2}+4A_{1} & [ 2 ] & [ 1, 1 ] \\
 \hline
53 & 16 & 3A_{5}+A_{1} & [ 1 ] & [ 2 ] \\
 \hline
54 & 16 & 2A_{5}+A_{3}+3A_{1} & [ 2 ] & [ 1, 1, 1 ] \\
 \hline
55 & 16 & A_{5}+3A_{3}+2A_{1} & [ 2 ] & [ 1, 1 ] \\
 \hline
56 & 16 & A_{5}+2A_{3}+A_{2}+3A_{1} & [ 2 ] & [ 1, 1 ] \\
 \hline
57 & 16 & A_{5}+2A_{3}+5A_{1} & [ 2, 2 ] & [ 1, 1 ] \\
 \hline
58 & 15 & E_{7}+A_{3}+5A_{1} & [ 2 ] & [ 1, 1 ] \\
 \hline
59 & 15 & D_{8}+A_{3}+4A_{1} & [ 2 ] & [ 1, 1, 1 ] \\
 \hline
60 & 15 & 2D_{6}+3A_{1} & [ 2 ] & [ 1, 1 ] \\
 \hline
61 & 15 & D_{6}+D_{5}+4A_{1} & [ 2 ] & [ 1, 1 ] \\
 \hline
62 & 15 & D_{6}+D_{4}+A_{3}+2A_{1} & [ 2 ] & [ 1, 1 ] \\
 \hline
63 & 15 & D_{6}+A_{7}+2A_{1} & [ 2 ] & [ 1, 1 ] \\
 \hline
64 & 15 & D_{6}+A_{5}+A_{3}+A_{1} & [ 2 ] & [ 1, 1 ] \\
 \hline
65 & 15 & D_{6}+A_{5}+4A_{1} & [ 2 ] & [ 1, 1 ] \\
 \hline
66 & 15 & D_{6}+2A_{3}+3A_{1} & [ 2 ] & [ 1, 1, 1 ] \\
 \hline
67 & 15 & D_{6}+A_{3}+A_{2}+4A_{1} & [ 2 ] & [ 1, 1 ] \\
 \hline
68 & 15 & D_{6}+A_{3}+6A_{1} & [ 2, 2 ] & [ 1, 1 ] \\
 \hline
69 & 15 & D_{5}+2A_{3}+4A_{1} & [ 2 ] & [ 1, 1 ] \\
 \hline
70 & 15 & D_{4}+A_{5}+A_{3}+3A_{1} & [ 2 ] & [ 1, 1 ] \\
 \hline
71 & 15 & D_{4}+3A_{3}+2A_{1} & [ 2 ] & [ 1, 1 ] \\
 \hline
72 & 15 & A_{9}+A_{3}+3A_{1} & [ 2 ] & [ 1, 1 ] \\
 \hline
73 & 15 & A_{7}+2A_{3}+2A_{1} & [ 2 ] & [ 1, 1 ] \\
 \hline
74 & 15 & A_{7}+A_{3}+5A_{1} & [ 2 ] & [ 1, 1 ] \\
 \hline
75 & 15 & 2A_{5}+A_{3}+2A_{1} & [ 2 ] & [ 1, 1 ] \\
 \hline
76 & 15 & 2A_{5}+5A_{1} & [ 2 ] & [ 1, 1 ] \\
 \hline
77 & 15 & A_{5}+2A_{3}+4A_{1} & [ 2 ] & [ 1, 1, 1 ] \\
 \hline
78 & 15 & A_{5}+A_{3}+A_{2}+5A_{1} & [ 2 ] & [ 1, 1 ] \\
 \hline
79 & 15 & 3A_{3}+A_{2}+4A_{1} & [ 2 ] & [ 1, 1 ] \\
 \hline
80 & 15 & 3A_{3}+6A_{1} & [ 2, 2 ] & [ 1, 1 ] \\
 \hline 
 &&&& \llap{(continues)}
\end{array}
$$
\vfill
\eject
$$
\begin{array}{ccccc} &&&& \llap{(continued)} \\
  {\rm no.} & r & \Phi & A & [c_1, \dots, c_k] \\\hline
81 & 14 & D_{8}+6A_{1} & [ 2 ] & [ 1, 1 ] \\
 \hline
82 & 14 & D_{6}+D_{4}+4A_{1} & [ 2 ] & [ 1, 1 ] \\
 \hline
83 & 14 & D_{6}+A_{5}+3A_{1} & [ 2 ] & [ 1, 1 ] \\
 \hline
84 & 14 & D_{6}+2A_{3}+2A_{1} & [ 2 ] & [ 1, 1 ] \\
 \hline
85 & 14 & D_{6}+A_{3}+5A_{1} & [ 2 ] & [ 1, 1, 1 ] \\
 \hline
86 & 14 & D_{6}+A_{2}+6A_{1} & [ 2 ] & [ 1, 1 ] \\
 \hline
87 & 14 & D_{5}+A_{3}+6A_{1} & [ 2 ] & [ 1, 1 ] \\
 \hline
88 & 14 & D_{4}+2A_{3}+4A_{1} & [ 2 ] & [ 1, 1 ] \\
 \hline
89 & 14 & A_{7}+A_{3}+4A_{1} & [ 2 ] & [ 1, 1 ] \\
 \hline
90 & 14 & A_{5}+2A_{3}+3A_{1} & [ 2 ] & [ 1, 1 ] \\
 \hline
91 & 14 & A_{5}+A_{3}+6A_{1} & [ 2 ] & [ 1, 1, 1 ] \\
 \hline
92 & 14 & 4A_{3}+2A_{1} & [ 2 ] & [ 1, 1 ] \\
 \hline
93 & 14 & 3A_{3}+5A_{1} & [ 2 ] & [ 1, 1 ] \\
 \hline
94 & 14 & 2A_{3}+A_{2}+6A_{1} & [ 2 ] & [ 1, 1 ] \\
 \hline
95 & 14 & 2A_{3}+8A_{1} & [ 2, 2 ] & [ 1, 1 ] \\
 \hline
96 & 13 & D_{6}+A_{3}+4A_{1} & [ 2 ] & [ 1, 1 ] \\
 \hline
97 & 13 & D_{6}+7A_{1} & [ 2 ] & [ 1, 1 ] \\
 \hline
98 & 13 & D_{4}+A_{3}+6A_{1} & [ 2 ] & [ 1, 1 ] \\
 \hline
99 & 13 & A_{5}+A_{3}+5A_{1} & [ 2 ] & [ 1, 1 ] \\
 \hline
100 & 13 & A_{5}+8A_{1} & [ 2 ] & [ 1, 1 ] \\
 \hline
101 & 13 & 3A_{3}+4A_{1} & [ 2 ] & [ 1, 1 ] \\
 \hline
102 & 13 & 2A_{3}+7A_{1} & [ 2 ] & [ 1, 1 ] \\
 \hline
103 & 13 & A_{3}+A_{2}+8A_{1} & [ 2 ] & [ 1, 1 ] \\
 \hline
104 & 12 & D_{6}+6A_{1} & [ 2 ] & [ 1, 1 ] \\
 \hline
105 & 12 & 2A_{3}+6A_{1} & [ 2 ] & [ 1, 1 ] \\
 \hline
106 & 12 & A_{3}+9A_{1} & [ 2 ] & [ 1, 1 ] \\
 \hline
107 & 11 & A_{3}+8A_{1} & [ 2 ] & [ 1, 1 ] \\
 \hline
\end{array}
$$
}

\bibliographystyle{plain}
\def\cftil#1{\ifmmode\setbox7\hbox{$\accent"5E#1$}\else
  \setbox7\hbox{\accent"5E#1}\penalty 10000\relax\fi\raise 1\ht7
  \hbox{\lower1.15ex\hbox to 1\wd7{\hss\accent"7E\hss}}\penalty 10000
  \hskip-1\wd7\penalty 10000\box7} \def\cprime{$'$} \def\cprime{$'$}
  \def\cprime{$'$} \def\cprime{$'$}

\end{document}